\documentclass[12pt,leqno,twoside,sort]{amsart}

\usepackage{amssymb,amsmath,amsthm,soul,color}

\usepackage[normalem]{ulem}

\usepackage{amssymb,amsmath,amsthm}

\usepackage{mathrsfs}

\usepackage[utf8]{inputenc}

\usepackage{todonotes}
\setuptodonotes{inline}

\usepackage[left=2cm, right=2cm, top=2cm, bottom=2cm]{geometry}

\usepackage{url}

\usepackage[numbers,sort&compress]{natbib}

\usepackage{fixmath}

\usepackage[T1]{fontenc}
\usepackage{lmodern}
\usepackage{microtype}
\usepackage[normalem]{ulem}
\usepackage{mathtools}

\usepackage[colorlinks=true,urlcolor=blue,
citecolor=red,linkcolor=blue,linktocpage,pdfpagelabels,
bookmarksnumbered,bookmarksopen]{hyperref}

\usepackage{bbm}

\usepackage{xcolor,cancel}

\linespread{1.2}

\newtheorem{Th}{Theorem}[section]

\newtheorem{Lem}[Th]{Lemma}
\newtheorem{Cor}[Th]{Corollary}

\newtheorem{Rem}[Th]{Remark}
\newtheorem{Ex}[Th]{Example}

\newcommand{\R}{\mathbb{R}}

\newcommand{\Z}{\mathbb{Z}}

\newcommand{\cC}{{\mathcal C}}
\newcommand{\cD}{{\mathcal D}}

\newcommand{\cM}{{\mathcal M}}

\newcommand{\cO}{{\mathcal O}}

\newcommand{\Ga}{\Gamma}

\newcommand{\weakto}{\rightharpoonup}

\renewcommand{\div}{\mathrm{div}\,}

\numberwithin{equation}{section}

\newcommand{\supp}{\mathrm{supp}\,}

\newcommand{\loc}{\mathrm{loc}}

\newcommand{\vertiii}[1]{{\left\vert\kern-0.25ex\left\vert\kern-0.25ex\left\vert #1 
    \right\vert\kern-0.25ex\right\vert\kern-0.25ex\right\vert}}

\begin{document}

\title[Nonlinear scalar field equations with a critical Hardy potential]{Nonlinear scalar field equations with a critical Hardy potential}

\author[B. Bieganowski]{Bartosz Bieganowski}
\author[D. Strzelecki]{Daniel Strzelecki}

\address[B. Bieganowski]{\newline\indent
	Faculty of Mathematics, Informatics and Mechanics, \newline\indent
	University of Warsaw, \newline\indent
	ul. Banacha 2, 02-097 Warsaw, Poland}
\email{\href{mailto:bartoszb@mimuw.edu.pl}{bartoszb@mimuw.edu.pl}}

\address[D. Strzelecki]{\newline\indent
	Faculty of Mathematics, Informatics and Mechanics, \newline\indent
	University of Warsaw, \newline\indent
	ul. Banacha 2, 02-097 Warsaw, Poland}
\email{\href{mailto:dstrzelecki@mimuw.edu.pl}{dstrzelecki@mimuw.edu.pl}}

\date{}	\date{\today} 

\begin{abstract} 
We study the existence of solutions for the nonlinear scalar field equation
$$-\Delta u - \frac{(N-2)^2}{4|x|^2} u  = g(u),  \quad \mbox{in } \R^N \setminus \{0\},$$
where the potential $-\frac{(N-2)^2}{4|x|^2}$ is the critical Hardy potential and $N \geq 3$. The nonlinearity $g$ is continuous and satisfies general subcritical growth assumptions of the Berestycki-Lions type. The problem is approached using variational methods within a non-standard functional setting. The natural energy functional associated with the equation is defined on the space $X^1(\R^N)$, which is the completion of $H^1(\R^N)$ with respect to the norm induced by the quadratic part of the functional. We establish the existence of a nontrivial solution $u_0 \in X^1(\R^N)$ that satisfies the Poho\v{z}aev constraint $\cM$ and minimizes the energy functional on $\cM$. Furthermore, assuming $g$ is odd, we prove the existence of at least one non-radial solution.

\medskip

\noindent \textbf{Keywords:} variational methods, singular potential, scalar field equation, critical Hardy potential
   
\noindent \textbf{AMS Subject Classification:} 35Q99, 35J10, 35J20, 58E99
\end{abstract}

\maketitle

\section{Introduction}

We study the existence, multiplicity, and qualitative properties of solutions for the nonlinear scalar field equation
\begin{equation}\label{eq:intro_main}
-\Delta u - \mu \frac{u}{|x|^2} = g(u), \quad \mbox{in } \R^N \setminus \{0\},
\end{equation}
where $N \geq 3$. This paper focuses specifically on the \textit{critical case} $\mu = \mu_c = \frac{(N-2)^2}{4}$, where $\mu_c$ is the best constant in the classical Hardy inequality. The inverse-square potential $\frac{1}{|x|^2}$, commonly referred to as the Hardy potential, is of significant importance in quantum mechanics, modeling diverse phenomena such as the interaction of a charge with a dipole or certain molecular potentials \cite{Alhaidari2014, Bawin, Case}. The nonlinearity $g$ is assumed to satisfy the general Berestycki-Lions conditions, (G1)--(G5) below, which do not include the celebrated Ambrosetti-Rabinowitz condition.

This problem presents a combination of two significant analytical challenges: the \textit{criticality of the potential} and the \textit{generality of the nonlinearity}.

The foundational case, where $\mu = 0$, was famously studied by Berestycki and Lions in their seminal work \cite{BL}. They established the existence of a ground state solution in the standard Sobolev space $H^1(\R^N)$ under these general conditions. The inclusion of the \textit{critical} Hardy potential $\mu_c / |x|^2$, however, fundamentally alters the problem's functional analytic structure. The associated quadratic form $\xi(u) = \int_{\R^N} (|\nabla u|^2 - \mu_c |x|^{-2} u^2) \, dx$ is merely positive semi-definite, not coercive, on $H^1(\R^N)$. This loss of coercivity necessitates moving the analysis to a larger, Sobolev-type space, $X^1(\R^N)$, which is the completion of $H^1(\R^N)$ with respect to the problem's natural norm. The properties of this space, which is essential for our analysis, build on foundational work by authors such as Frank \cite{Frank}, Suzuki \cite{Suzuki}, Mukherjee, Nam, and Nguyen \cite{Mukherjee}, Trachanas and Zographopoulos \cite{TrachanasZographopoulos, TrachanasZographopoulosCor}.

Our focus on the \textit{critical} case $\mu = \mu_c$ should be contrasted with the \textit{subcritical} case $\mu < \mu_c$. For subcritical potentials, the quadratic form remains coercive (or can be made so with an equivalent norm), allowing for a more standard application of variational methods within $H^1(\R^N)$ or related weighted spaces. The subcritical case was of interest to many authors (see e.g. \cite{GuoMederski, LiLiTang, MR3989664} and references therein). For instance, Li, Li, and Tang \cite{LiLiTang} studied the subcritical problem under Berestycki-Lions-type conditions, establishing the existence of a ground state solution \textit{within} the $H^1(\R^N)$ space. We find that for the critical potential, any nonnegative solution is genuinely singular and \textit{does not} belong to $H^1(\R^N)$. This highlights a fundamental difference in the qualitative nature of solutions as the potential transitions from subcritical to critical. Other related works, such as those by Deng, Jin, and Peng \cite{Deng}, Felli, Pistoia \cite{Felli}, Smets \cite{Smets}, and Terracini \cite{Terracini96}, have investigated problems combining the Hardy potential with the \textit{critical Sobolev exponent} $2^* = \frac{2N}{N-2}$, which introduces a different set of compactness challenges related to the nonlinearity itself.

The second major challenge, the absence of the Ambrosetti-Rabinowitz condition, means that standard mountain pass arguments do not guarantee the boundedness of Palais-Smale sequences. For the classical problem ($\mu=0$), this difficulty is often overcome by seeking solutions on the Pohožaev constraint. A systematic approach to find such solutions was developed by Jeanjean \cite{Jeanjean}, and later refined by Jeanjean, Tanaka \cite{JeanjeanTanaka} and Hirata, Ikoma, Tanaka \cite{HirataIkomaTanaka}. They introduced an \textit{augmented functional} technique to construct a bounded Poho\v{z}aev-Palais-Smale sequence at the mountain pass level. We adopt this strategy to construct a bounded Poho\v{z}aev-Palais-Smale sequence for our singular problem.

The mere boundedness of a (PPS) sequence is not sufficient; one must additionally establish its convergence. The primary tool for this in translation-invariant problems is the profile decomposition method, developed by Gérard \cite{Gerard} and Nawa \cite{Nawa}. This method fails in our setting because the $X^1(\R^N)$ space, due to the potential centered at the origin, is \textit{not} translation-invariant. A central technical contribution of this paper is the development of a \textit{new} profile decomposition specifically adapted to bounded (PPS) sequences in this non-translation-invariant space. This decomposition is crucial as it shows that any potential \textit{bubbles} (weak limits of translated sequences) must be solutions to the limiting problem \textit{without} the potential, i.e., in the classical $H^1(\R^N)$ space. By showing that our mountain pass level is strictly less than the ground state energy of this limiting problem (an energy comparison adapting \cite{JeanjeanTanaka}), we prove that no bubbles can form, forcing the strong convergence of the sequence to our solution.

Finally, we extend our analysis to the existence of non-radial solutions. This is a classical and challenging question, famously posed by Berestycki and Lions \cite{BL} for the $\mu=0$ case. This question was recently answered for the classical problem by Mederski \cite{MR4173560}, who found non-radial solutions by minimizing the functional on a subspace of $H^1(\R^N)$-functions orthogonal to the space of radial functions. We successfully adapt this symmetry-based minimization argument of \cite{MR4173560} from the standard $H^1(\R^N)$ space to our non-translation-invariant, singular $X^1(\R^N)$ setting, thereby establishing the existence of non-radial solutions for the critical Hardy potential problem under general nonlinearities.

In the critical case, \eqref{eq:intro_main} rewrites as
\begin{equation}\label{eq:main}
-\Delta u - \frac{(N-2)^2}{4|x|^2} u  = g(u),  \quad \mbox{in } \R^N \setminus \{0\}.
\end{equation}
We consider the following general conditions
\begin{itemize}
\item[(G1)] $g : \R \rightarrow \R$ is continuous;
\item[(G2)] $-\infty < \liminf_{s \to 0}  \frac{g(s)}{s} \leq \limsup_{s \to 0} \frac{g(s)}{s} = -\ell < 0$;
\item[(G3)] $\lim_{|s| \to \infty} \frac{g(s)}{|s|^{p-1}} = 0$ and $\lim_{|s| \to \infty} g(s)s \geq 0$ for some $p \in \left( 2,2^* \right)$;
\item[(G4)] there is $\zeta_0 > 0$ such that $G(\zeta_0) > 0$, where $G(s) := \int_0^{s} g(\tau) \, d\tau > 0$;
\item[(G5)] there are $\nu > 2$ and $\gamma_0 < 0$ such that
$G(s) - \frac{1}{\nu} g(s)s \leq \gamma_0 s^2$ for $s\in\R$.
\end{itemize}

Assumptions (G1)--(G4) are standard, in the spirit of \cite{BL}, although we do not require oddness of $g$. The condition (G5) is new and is required to verify the boundedness of (PPS) sequences. It can be treated as a variant of the Ambrosetti-Rabinowitz condition, which in its original form
\begin{equation}\tag{AR}\label{AR}
\mbox{there is } \nu > 2 \mbox{ such that } 0 \leq G(s) \leq \frac{1}{\nu} g(s)s, \quad s \in \R
\end{equation}
is not necessarily satisfied under (G1)--(G5).

We consider the so-called \textit{positive-mass case} ($\ell > 0$). Therefore, it is convenient to define $f : \R \rightarrow \R$ by
\begin{equation}\label{def:f}
f(s) := \ell s + g(s), \quad s \in \R
\end{equation}
and then $G$ can be rewritten as
$$
G(s) = -\frac{\ell}{2} s^2 + F(s), \quad F(s) := \int_0^s f(\tau) \, d\tau.
$$
Then conditions (G1)--(G4) imply the following
\begin{itemize}
\item[(F1)] $f : \R \rightarrow \R$ is continuous;
\item[(F2)] $-\infty < \liminf_{s \to 0}  \frac{f(s)}{s} \leq \limsup_{s \to 0} \frac{f(s)}{s} = 0 < \ell$;
\item[(F3)] $\lim_{|s| \to \infty} \frac{f(s)}{|s|^{p-1}} = 0$ and $\lim_{|s| \to \infty} f(s)s \geq 0$ for some $p \in \left( 2, 2^* \right)$;
\item[(F4)] there is $\zeta_0 > 0$ such that $F(\zeta_0) > \frac{\ell}{2} \zeta_0^2$;
\item [(F5)] there are $\nu > 2$ and $\gamma_0 < \ell\left(\frac 12-\frac 1\nu\right)$ such that
$F(s) - \frac{1}{\nu} f(s)s \leq \gamma_0 s^2$ for $s\in\R$.
\end{itemize}
Observe that $f$ also doesn't need to satisfy the \eqref{AR} condition. In fact, \eqref{AR} implies (F5). Below we present a few examples of nonlinearities satisfying (F1)--(F5).

\begin{Ex}
In what follows, we fix $p \in (2,2^*)$.
\begin{enumerate}
    \item[(a)] It is standard to check that $f(s)= |s|^{q-2}s$ satisfies (F1)--(F4) with $2 < q < p$ in (F3). To see (F5) note that
    $$
    F(s) - \frac{1}{q} f(s)s = \frac{1}{q} |u|^q - \frac1q |u|^q = 0. 
    $$
    \item[(b)] Consider $f(s) = \ln (e + |s|) |s|^{q-2}s$, where $2 < q < p$. It is clear that (F1)--(F3) are satisfied. Since $f$ is bounded from below by the power nonlinearity, (F4) is obvious. To get (F5) observe that, for $\nu=q$
    \[
    F(s)-\frac{1}q f(s)s\leq \ln(e+|s|)\left(\int_0^s |s|^{q-2}s\,ds-\frac1q |s|^q \right)=0.
    \]
   
    \item[(c)] Let $\varepsilon > 0$, $2 < q < p$, $0 < \alpha < q-2$. Consider $f(s) = \frac{|s|^{q-2}s}{1+\varepsilon |s|^\alpha}$. Clearly (F1)--(F3) are satisfied. To see (F4), we compute for sufficiently large $s > 1$,
    $$
    F(s) = \int_0^s \frac{|\tau|^{q-2} \tau}{1+\varepsilon |\tau|^\alpha} \, d\tau \geq \frac{1}{1+\varepsilon s^\alpha} \int_0^s \tau^{q-1} \,d\tau = \frac{s^q}{q (1+\varepsilon s^\alpha)} > \frac{s^{q-\alpha}}{q (1+\varepsilon )} = \frac{s^{q-\alpha-2}}{q (1+\varepsilon )} s^2 > \frac{\ell}{2} s^2.
    $$
    To get (F5) we note that $s \mapsto H(s) := F(s) - \frac{1}{\nu} f(s)s$ is even, so it is enough to verify (F5) for $s > 0$. For such $s$ we compute
    \begin{align*}
    H'(s) &= \frac{s^{q-1}}{1+\varepsilon s^\alpha} - \frac{1}{\nu} \frac{q s^{q-1} (1+\varepsilon s^\alpha) - \alpha \varepsilon s^{q+\alpha - 1}}{(1+\varepsilon s^\alpha)^2} = \frac{1}{\nu} \frac{s^{q-1}}{1+\varepsilon s^\alpha} \left( \nu - q + \frac{ \alpha \varepsilon s^\alpha }{1+\varepsilon s^\alpha} \right) \\
    &< \frac{1}{\nu} \frac{s^{q-1}}{1+\varepsilon s^\alpha} \left( \nu - q + \alpha \right) = 0
    \end{align*}
    for $\nu = q - \alpha > 2$. Taking into account that $H(0) = 0$, we obtain
    $$
    F(s) - \frac{1}{q-\alpha} f(s)s \leq H(0) = 0.
    $$
    
    \item[(d)] Consider $f(s) = (2 + \arctan(s)) |s|^{q-2}s$ with $2 < q < p$. Since $s \mapsto 2+\arctan s$ is increasing and bounded away from zero, it satisfies $(F1)-(F5)$ in the same way as in (b).
    \item[(e)] Consider $2 < r < q < p$ and
    $$
    f(s) = |s|^{q-2}s - |s|^{r-2}s.
    $$
    Clearly (F1)--(F4) are satisfied. To see (F5) note that
    $$
    F(s) - \frac{1}{q} f(s)s = \frac1q |s|^q - \frac1r |s|^r - \frac1q |s|^q + \frac1q |s|^r = \left( \frac1q - \frac1r \right) |s|^r < 0
    $$
    for $s \neq 0$. Note that $f$ doesn't satisfy \eqref{AR}.
    \item[(f)] Let $2 < q < p$ and
    $$
    f(s) = |s|^{q-2}s + e^{-1/|s|} \frac{|s|^{q-2}s}{1 + |s|^{q-2}}.
    $$
    It is easy to check that (F1)--(F4) hold. To show (F5), as in (c) we have that $s \mapsto H(s) := F(s) - \frac{1}{q} f(s)s$ is even. Hence, let $s > 0$. Note that the power part $|s|^{q-2}s$ of $f$ does not give any contribution in $H$, so 
    $$
    H(s) = \int_0^s e^{-1/\tau} \frac{\tau^{q-1}}{1 + \tau^{q-2}} \, d\tau - \frac{1}{q} e^{-1/s} \frac{s^{q}}{1 + s^{q-2}}.
    $$
    Then
    \begin{align*}
    H'(s) &= e^{-1/s} \frac{s^{q-1}}{1 + s^{q-2}} - \frac{1}{q} e^{-1/s} \frac{2s^{2q+1} + s^{2q} + q s^{q+3} + x^{q+2}}{(s^q + s^2)^2} \\ &= \frac{e^{-1/s} \left( (q-2) s^{2q+1} - s^{2q} - s^{q+2} \right)}{q (s^q+s^2)^2} \leq \frac{(q-2) s^{2q+1}}{q (s^q+s^2)^2} \leq \frac{q-2}{q} s.
    \end{align*}
    Therefore
    $$
    H(s) \leq \frac{q-2}{2q} s^2 =: \gamma_0 s^2.
    $$
    Hence the assumptions are satisfied with
    $$
    \ell > \gamma_0 \left(\frac12 - \frac1q \right)^{-1} = 1.
    $$
Note that this example does not satisfy \eqref{AR}. Indeed, for $H_\nu(s):=F(s)-\frac{1}{\nu}f(s)$ we have
\[
H_\nu'(s)=\frac{e^{-1/s}\left(((\nu-2)s - 1)s^{2q}+s^{q+2}(\nu s - qs - 1) \right)}
     {\nu(s^{q} + s^{2})^{2}}.
\]
Hence, for $\nu>2$ there is $\lim_{s\to \infty} H_\nu'(s)=\infty$, so $\lim_{s\to\infty} H_\nu(s)=\infty$ and \eqref{AR} is not satisfied for any $\nu > 2$.
\end{enumerate}
\end{Ex}

The natural energy functional associated with \eqref{eq:main} is given by
\begin{align*}
J (u) &= \frac{1}{2} \int_{\R^N} |\nabla u|^2 - \frac{(N-2)^2}{4} \frac{u^2}{|x|^2} \, dx - \int_{\R^N} G(u) \, dx \\
&=  \frac{1}{2} \int_{\R^N} |\nabla u|^2 - \frac{(N-2)^2}{4} \frac{u^2}{|x|^2} \, dx + \frac{\ell}{2} \int_{\R^N} u^2 \, dx - \int_{\R^N} F(u) \, dx.
\end{align*}
Clearly, $J$ is well defined on $H^1 (\R^N)$, however the quadratic form
$$
u \mapsto \|u\|^2 := \xi(u) +  \int_{\R^N} u^2 \, dx,
$$
where
$$
\xi(u) := \int_{\R^N} |\nabla u|^2 - \frac{(N-2)^2}{4} \frac{u^2}{|x|^2} \, dx,
$$
generates a norm in $H^1 (\R^N)$, which is not complete (and, in particular, is not equivalent to the standard one). Therefore, to work in a Banach space, we need to slightly enlarge the energy space. More precisely, we define
$$
X^1 (\R^N) := \mbox{completion of } H^1(\R^N) \mbox{ with respect to the norm } \| \cdot \|.
$$
For properties of $X^1 (\R^N)$ and the variational setting (including the appropriate definition of $\xi$ and $\|\cdot\|$), see Sections \ref{sect:2} and \ref{sect:3}, respectively. Under the foregoing assumptions, $J$ is a $\cC^1$ functional on $X^1(\R^N)$ and its critical points are weak solutions to \eqref{eq:main}.

It is natural to consider the Poho\v{z}aev constraint associated with \eqref{eq:main}, namely
$$
\cM := \left\{ u \in X^1 (\R^N) \setminus \{0\} \ : \ \xi(u) = 2^* \int_{\R^N} G(u) \, dx \right\}.
$$
In the classical setting it is known that every weak solution of a scalar field equation belongs to the Poho\v{z}aev constraint. Here, due to the presence of the critical Hardy potential, it is unclear whether all nontrivial critical points of $J$ belong to $\cM$. In our analysis we will find a solution that minimizes $J$ on $\cM$, although it is vague whether it is a ground state solution.

We are ready to state our first result.

\begin{Th}\label{Th:Main1-Existence}
Suppose that (G1)--(G5) are satisfied. 
\begin{enumerate}
\item[(a)] There exists a nontrivial solution $u_0 \in X^1 (\R^N)$ of \eqref{eq:main} that satisfies the Poho\v{z}aev constraint $\cM$ and $J(u_0) > 0$. 
\item[(b)] $u_0$ is a minimizer of the energy functional $J$ on $\cM$.
\item[(c)] Every solution $u\in X^1(\R^N)$ of \eqref{eq:main} belongs to $C^{1,\alpha}_{\loc} (\R^N \setminus \{0\})$ for some $\alpha \in (0,1)$.
\item[(d)] If, in addition, $g$ is H\"older continuous, every solution $u\in X^1(\R^N)$ of \eqref{eq:main} is of the class $C^2 (\R^N\setminus \{0\})$.
\item[(e)] If, in addition, $g$ is odd, there exists a nontrivial, nonnegative, radial and radially nonincreasing solution in $X^1 (\R^N) \cap C^2 (\R^N \setminus \{0\})$ of \eqref{eq:main}, which is a minimizer of $J$ on $\cM$.
\item[(f)] Every nonnegative solution of \eqref{eq:main} in $X^1(\R^N) \cap C^2 (\R^N \setminus\{0\})$ does not belong to $H^1 (\R^N)$.
\end{enumerate}
\end{Th}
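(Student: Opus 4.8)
The plan is to obtain $u_0$ as a minimizer of $J$ on the Poho\v{z}aev constraint $\cM$, realized at the mountain pass level of $J$ on $X^1(\R^N)$, and then to deduce (b)--(f) from this construction.

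\emph{Part (a).} One first verifies the mountain pass geometry of $J$: by (F1)--(F3), the embeddings $X^1(\R^N)\hookrightarrow L^q(\R^N)$ for $q\in[2,2^*)$, and $\ell>0$ from (F2), we get $J(u)\ge c_0>0$ on a small sphere $\{\|u\|=r\}$; and (G4)/(F4) furnish $v$ with $\int_{\R^N}G(v)\,dx>0$, whose dilations $v_t:=v(\cdot/t)$ satisfy $J(v_t)=\tfrac12 t^{N-2}\xi(v)-t^N\int_{\R^N}G(v)\,dx\to-\infty$, so $J$ has a mountain pass level $c>0$. Since \eqref{AR} may fail, I would produce a \emph{bounded} Poho\v{z}aev--Palais--Smale sequence via the augmented--functional device of Jeanjean and Jeanjean--Tanaka applied to $\widetilde J(\theta,u):=J(u(\cdot/e^{\theta}))=\tfrac12 e^{(N-2)\theta}\xi(u)-e^{N\theta}\int_{\R^N}G(u)\,dx$ on $\R\times X^1(\R^N)$; a Palais--Smale sequence for $\widetilde J$ rescales to $(u_n)\subset X^1(\R^N)$ with $J(u_n)\to c$, $J'(u_n)\to0$, and $P(u_n)\to0$, where $P(u):=\tfrac{N-2}{2}\xi(u)-N\int_{\R^N}G(u)\,dx$. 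Condition (G5)/(F5) is exactly what yields boundedness: since $\tfrac12-\tfrac1\nu>0$ and $\gamma_0<\ell(\tfrac12-\tfrac1\nu)$, one checks $J(u_n)-\tfrac1\nu\langle J'(u_n),u_n\rangle\ge\delta\|u_n\|^2$ for some $\delta>0$, forcing $\sup_n\|u_n\|<\infty$; up to a subsequence $u_n\weakto u_0$ in $X^1(\R^N)$. The decisive point is to upgrade this to strong convergence. As $X^1(\R^N)$ is not translation invariant, I would invoke the profile decomposition for bounded (PPS) sequences constructed in this paper, whose conclusion is that every non-vanishing bubble is a translate-to-infinity of a solution of the limiting problem $-\Delta v=g(v)$ in $H^1(\R^N)$, carrying energy at least the ground state level $m_\infty$ of that problem; a Jeanjean--Tanaka--type energy comparison — using that the Hardy term is strictly negative, so a competitor path concentrating near the origin has $J$ strictly below the translation-invariant functional — gives $c<m_\infty$, so no bubble forms. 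Hence $u_n\to u_0$ strongly, $u_0\ne0$, $J'(u_0)=0$, and $P(u_0)=\lim_n P(u_n)=0$, i.e. $\xi(u_0)=2^*\int_{\R^N}G(u_0)\,dx$, so $u_0\in\cM$. \emph{I expect this convergence step, together with the construction of the non--translation--invariant profile decomposition it relies on, to be the main obstacle.}

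\emph{Part (b).} I would prove $c=\inf_{\cM}J$. For $u\in\cM$ one has $\int_{\R^N}G(u)\,dx=\xi(u)/2^*>0$ and $\tfrac{d}{dt}J(u(\cdot/t))=N\big(\int_{\R^N}G(u)\,dx\big)\,t^{N-3}(1-t^2)$, so $t\mapsto J(u(\cdot/t))$ is maximized at $t=1$ with value $J(u)=\tfrac1N\xi(u)$; dilating a constraint element therefore gives an admissible mountain pass path with top value $J(u)$, whence $c\le\inf_{\cM}J$. Conversely, any mountain pass path, after appending the dilation $s\mapsto v(\cdot/s)$ at its endpoint $v$ (for which $\int_{\R^N}G(v)\,dx>0$ since $J(v)<0$) — an operation that does not raise the top value — must cross $\{\xi=2^*\int_{\R^N}G\}$ because $P>0$ near $0$ and $P(v(\cdot/s))\to-\infty$; this gives $c\ge\inf_{\cM}J$. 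Finally, a minimizer of $J$ on $\cM$ is a free critical point: $\cM=\Psi^{-1}(0)\setminus\{0\}$ with $\Psi(u):=\xi(u)-2^*\int_{\R^N}G(u)\,dx$ is a $\cC^1$-manifold near each of its points (if $\Psi'(w)=0$, then testing the resulting equation with $w$ and using (G5) would force $\nu<2$), so $J'(w)=\lambda\Psi'(w)$; evaluating along the dilation direction and using $\tfrac{d}{dt}\big|_{t=1}J(w(\cdot/t))=P(w)=0$ while $\tfrac{d}{dt}\big|_{t=1}\Psi(w(\cdot/t))=-2\xi(w)\ne0$ forces $\lambda=0$ (the dilation direction is transversal to $\cM$). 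Since $u_0\in\cM$ and $J(u_0)=\lim_n J(u_n)=c=\inf_{\cM}J$, our $u_0$ is such a minimizer.

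\emph{Parts (c) and (d).} On $\R^N\setminus\{0\}$ the potential $\tfrac{(N-2)^2}{4|x|^2}$ is smooth and locally bounded, and $X^1(\R^N)\hookrightarrow H^1_{\loc}(\R^N\setminus\{0\})$. Writing $-\Delta u=g(u)+\tfrac{(N-2)^2}{4|x|^2}u$ and using $|g(s)|\le C(|s|+|s|^{p-1})$ with $p<2^*$, a Brezis--Kato / $L^q$-bootstrap on compact subsets of $\R^N\setminus\{0\}$ yields $u\in L^\infty_{\loc}(\R^N\setminus\{0\})$, hence $u\in W^{2,q}_{\loc}$ for all $q<\infty$, hence $u\in C^{1,\alpha}_{\loc}(\R^N\setminus\{0\})$ for some $\alpha\in(0,1)$. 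If $g$ is moreover H\"older continuous, then $g(u)$ and $\tfrac{(N-2)^2}{4|x|^2}u$ are locally H\"older on $\R^N\setminus\{0\}$, and Schauder estimates give $u\in C^2(\R^N\setminus\{0\})$.

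\emph{Parts (e) and (f).} If $g$ is odd, $G$ is even. For $u\in X^1(\R^N)$ its Schwarz symmetrization $u^*$ satisfies $u^*\in X^1(\R^N)$ and $\xi(u^*)\le\xi(u)$: approximate $u$ in $X^1(\R^N)$ by $H^1$-functions and combine the P\'olya--Szeg\H{o} inequality with the Hardy--Littlewood inequality (the weight $|x|^{-2}$ equals its own decreasing rearrangement, so $\int_{\R^N}|x|^{-2}u^2\le\int_{\R^N}|x|^{-2}(u^*)^2$), while the layer--cake formula and $p<2^*$ give $\int_{\R^N}G(u^*)\,dx=\int_{\R^N}G(|u|)\,dx=\int_{\R^N}G(u)\,dx$. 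Applying this to the minimizer $u_0$ of (b): $\int_{\R^N}G(u_0^*)\,dx=\int_{\R^N}G(u_0)\,dx>0$, so $w:=u_0^*(\cdot/t_0)\in\cM$ for a suitable $t_0>0$ with $t_0^2=\xi(u_0^*)\big/\big(2^*\int_{\R^N}G(u_0^*)\,dx\big)\le1$, whence $J(w)=\tfrac1N t_0^{N-2}\xi(u_0^*)\le\tfrac1N\xi(u_0)=J(u_0)=\inf_{\cM}J$; thus $w$ is a nonnegative, radial, radially nonincreasing minimizer of $J$ on $\cM$, hence a solution by (b), and its radial profile solves an ODE with continuous right-hand side once $w\in C^{1,\alpha}_{\loc}(\R^N\setminus\{0\})$, so $w\in C^2(\R^N\setminus\{0\})$. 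For (f), let $u\in X^1(\R^N)\cap C^2(\R^N\setminus\{0\})$ be nonnegative and nontrivial, and assume toward a contradiction that $u\in H^1(\R^N)$. By the strong maximum principle $u>0$ on $\R^N\setminus\{0\}$. Using the $X^1$-decay $u(x)=O(|x|^{-(N-2)/2})$, $p<2^*$ (so $(N-2)(p-2)/2<2$), and (G2)--(G3), one obtains $|g(u(x))|\le\tfrac12\,\tfrac{(N-2)^2}{4|x|^2}\,u(x)$ for $0<|x|<\rho$, so $-\Delta u\ge\tfrac12\,\tfrac{(N-2)^2}{4|x|^2}u\ge0$ on $B_\rho\setminus\{0\}$: $u$ is a positive supersolution of a (sub)critical Hardy operator on a punctured ball. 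Comparing $u$ with radial barriers $|x|^{-\gamma}$, and with logarithmically corrected barriers $|x|^{-(N-2)/2}(\log(\rho/|x|))^{\theta}$ to reach the borderline rate — the admissible growth at the origin being controlled by $u=O(|x|^{-(N-2)/2})$ and, for the refinement, by $\lim_{|s|\to\infty}g(s)s\ge0$, which makes $g(u)$ essentially nonnegative where $u$ is large — via the maximum principle for the Hardy operator on a punctured ball yields a lower bound forcing $\int_{B_\rho}u^{2^*}\,dx=\infty$, contradicting $u\in H^1(\R^N)\hookrightarrow L^{2^*}_{\loc}(\R^N)$. (Testing with $u$ together with the Poho\v{z}aev identity gives $\int_{\R^N}\big(f(u)u-2^*F(u)\big)\,dx=\tfrac{\ell}{2}(2-2^*)\|u\|_{L^2}^2<0$ for any nontrivial $H^1$-solution, which already excludes $H^1$-solutions when (F5) holds with some $\nu\ge2^*$; the singular-barrier argument handles the general case and is where nonnegativity is essential.)
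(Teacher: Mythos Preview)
Parts (a)--(d) follow essentially the same route as the paper (mountain pass geometry, Jeanjean's augmented functional for a bounded (PPS) sequence, profile decomposition with the energy comparison $c<c_\infty$, then Brezis--Kato and Schauder). One minor remark on (b): your ``Conversely'' argument and the Lagrange--multiplier step are not needed to prove (b) as stated --- once (a) gives $u_0\in\cM$ with $J(u_0)=c$, the inequality $c\ge\inf_{\cM}J$ is immediate, and only $c\le\inf_{\cM}J$ (your dilation-path argument) remains. The paper does \emph{not} prove that every minimizer on $\cM$ is a free critical point; in particular your dilation-direction test of the Lagrange relation is delicate in $X^1(\R^N)$ because $t\mapsto w(\cdot/t)$ need not be $C^1$ as a curve into $X^1(\R^N)$ (its formal derivative $-x\cdot\nabla w$ is not obviously in $X^1$). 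For (e), your route (rescale $u_0^*$ onto $\cM$) is different from the paper's (symmetrize the optimal dilation path through $u_0$ and invoke a deformation-lemma argument to conclude that the unique maximum on that path is a critical point); your approach is cleaner conceptually but hinges on the ``minimizer on $\cM$ $\Rightarrow$ critical point'' step just mentioned, whereas the paper's approach sidesteps it entirely.

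The genuine gap is in (f). Your argument starts from the pointwise upper bound $u(x)=O(|x|^{-(N-2)/2})$ near the origin, used to absorb the nonlinear term via $|u|^{p-2}=O(|x|^{-(N-2)(p-2)/2})=o(|x|^{-2})$. That bound is not a property of $X^1(\R^N)$ (nor of $H^1\cap C^2(\R^N\setminus\{0\})$ under the contradiction hypothesis), and you do not derive it from the equation; without it your inequality $|g(u)|\le\tfrac12\,\tfrac{(N-2)^2}{4|x|^2}u$ fails in general. In fact an upper bound is unnecessary: (G1)--(G3) give the one-sided estimate $g(s)\ge -Cs$ for $s\ge0$, so for small $|x|$ one has $-\Delta u\ge\big(\tfrac{(N-2)^2}{4|x|^2}-C\big)u\ge0$ directly. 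The paper proceeds this way, but with the substitution $u=|x|^{\tau}w$ (any $\tau\in(-\tfrac{N-2}{2},0)$), obtaining $-\mathrm{div}(|x|^{2\tau}\nabla w)\ge0$ near $0$, then invoking the comparison argument of Li--Li--Tang to get $u\gtrsim|x|^{\tau}$, hence $u(x)\to\infty$ and $g(u)\ge0$ near $0$; finally $-\Delta u-\tfrac{(N-2)^2}{4|x|^2}u\ge0$ on a punctured ball and Smets' nonexistence result for $H^1$ supersolutions of the critical Hardy operator yields $u\notin H^1$. Your barrier/maximum-principle sketch is aiming at the same conclusion, but the critical Hardy operator does not enjoy a standard comparison principle on punctured balls, so ``comparing with $|x|^{-\gamma}$ and logarithmic barriers'' would itself require the kind of argument encapsulated in the cited result.
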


We would like to point out that if $g$ is odd, even if we restrict the setting to radially symmetric functions, it is unclear whether a direct minimization on $\cM$ can be applied. Specifically, it is unclear whether a minimizing sequence on $\cM$ (that is, sequence $(u_n) \subset \cM$ such that $J(u_n) \to \inf_{\cM} J$) is bounded.

In this paper, we present a profile decomposition approach, which is based on the concentration-compactness principle in the spirit of Lions. The decomposition of minimizing sequences, in the spirit of \cite{Nawa, Gerard} is a challenging problem, as the space $X^1 (\R^N)$ is not translation-invariant, and even if the sequence has a weak limit after finding proper translations, it should be a solution to the limiting problem without the singular potential, for which the proper energy space is $H^1 (\R^N)$, not $X^1 (\R^N)$.

Next, we investigate the existence of non-radial solutions. We follow the ideas of \cite{MR4173560}. Suppose that $N \geq 4$ and write $\R^N = \R^M \times \R^M \times \R^{N-2M}$, where $2 \leq M \leq \frac{N}{2}$. For $x \in \R^N$ we will write $x=(x_1, x_2, x_3)$, where $x_1, x_2 \in \R^M$ and $x_3 \in \R^{N-2M}$. We introduce an action $\tau : \R^N \rightarrow \R^N$ by
$$
\tau(x_1, x_2, x_3) := (x_2, x_1, x_3)
$$
and the subspace
\begin{equation}\label{eq:Xtau}
X_\tau := \left\{ u \in X^1 (\R^N) \ : \ u(x) = - u(\tau x) \mbox{ for all } x \in \R^N \right\}.
\end{equation}
Then, if $u \in X_\tau$ is radially symmetric, then $u \equiv 0$ and therefore $X_\tau$ does not contain nontrivial, radial functions.

Let $\mathcal{O} := \mathcal{O}(M) \times \mathcal{O}(M) \times \mathrm{id}_{N-2M} \subset \mathcal{O}(N)$. We introduce the subspace $X^1_\mathcal{O} (\R^N)$ of $\mathcal{O}$-invariant functions.

\begin{Th}\label{Th:Main2}
Suppose that $N\geq 4$, (G1)--(G5) are satisfied and that $g$ is odd. Then
$$
\inf_{\cM \cap X_\tau \cap X^1_{\mathcal{O}} (\R^N)} J \geq 2 \inf_{\cM} J > 0
$$
and there exists a nontrivial solution $v_0 \in \cM \cap X_\tau \cap X^1_{\mathcal{O}} (\R^N)$ of \eqref{eq:main}, which minimizes $J$ on $\cM \cap X_\tau \cap X^1_{\mathcal{O}} (\R^N)$. In particular, $v_0$ is not radial.
\end{Th}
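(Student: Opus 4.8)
\medskip
\noindent\emph{Proof strategy for Theorem \ref{Th:Main2}.}
Write $c:=\inf_{\cM}J$ (the value attained in Theorem \ref{Th:Main1-Existence}, with $c>0$ because of the behaviour of $G$ near the origin, cf.\ (F2)), set $\widetilde X:=X_\tau\cap X^1_{\mathcal{O}}(\R^N)$ and $c_\tau:=\inf_{\cM\cap\widetilde X}J$. Since $g$ is odd, $G$ is even; moreover $\tau$ and every $h\in\mathcal{O}$ belong to $\mathcal{O}(N)$ and hence act on $X^1(\R^N)$ as linear isometries preserving $\xi$, $\|\cdot\|$ and $u\mapsto\int_{\R^N}G(u)$. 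Therefore $J$ is invariant under the group generated by $u\mapsto u\circ h$ ($h\in\mathcal{O}$) and the involution $u\mapsto-u\circ\tau$, whose fixed-point set is exactly $\widetilde X$; so by Palais' principle of symmetric criticality it suffices to produce a minimizer of $J$ over $\cM\cap\widetilde X$ that is a critical point of $J|_{\widetilde X}$. Such a point is then a weak solution of \eqref{eq:main}, inherits the regularity from Theorem \ref{Th:Main1-Existence}(c)--(d), and, being a nonzero element of $X_\tau$, is non-radial.

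I would first establish the displayed inequality $c_\tau\ge2c$, which is the soft part. Fix $u\in\cM\cap\widetilde X$, $u\not\equiv0$; since $u=-u\circ\tau$ with $u\not\equiv0$, the function $u$ changes sign, and its positive and negative parts satisfy $u^-=u^+\circ\tau$, both lying in $X^1(\R^N)$ (truncation being continuous on $X^1(\R^N)$, as one sees through the ground-state substitution $u=|x|^{-(N-2)/2}v$, which turns $\xi$ into a weighted Dirichlet integral of $v$; cf.\ Section \ref{sect:2}). As $\{u=0\}$ carries no gradient a.e., $G$ is even, and $\tau\in\mathcal{O}(N)$ preserves both quantities, one gets $\xi(u)=\xi(u^+)+\xi(u^-)=2\xi(u^+)$ and $\int_{\R^N}G(u)=2\int_{\R^N}G(u^+)$, so $u\in\cM$ forces $\xi(u^+)=2^*\int_{\R^N}G(u^+)$, i.e.\ $u^+\in\cM$. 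Since $J=\tfrac1N\xi$ on $\cM$, it follows that $J(u)=\tfrac1N\xi(u)=2\cdot\tfrac1N\xi(u^+)=2J(u^+)\ge2c$, and taking the infimum gives $c_\tau\ge2c$.

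For the existence of the minimizer I would run, inside $\widetilde X$, the scheme used for Theorem \ref{Th:Main1-Existence}(a). Using (F4) and the evenness of $G$ one builds a smooth, compactly supported, $\mathcal{O}$-invariant, $\tau$-antisymmetric function with positive $\int G$ (a Berestycki--Lions-type test function on two $\tau$-swapped $\mathcal{O}$-invariant regions), rescales it onto $\cM$, so $\cM\cap\widetilde X\ne\emptyset$, and dilates further to reach negative energy; hence $J|_{\widetilde X}$ has the mountain-pass geometry and, exactly as for Theorem \ref{Th:Main1-Existence}, its mountain-pass level equals $\inf_{\cM\cap\widetilde X}J=c_\tau$ and is attained along a \emph{bounded} Poho\v{z}aev--Palais--Smale sequence $(u_n)\subset\widetilde X$ (boundedness coming from Jeanjean's augmented-functional argument together with (G5)), with $J(u_n)\to c_\tau$, $J'(u_n)\to0$ in $(\widetilde X)^*$, $\xi(u_n)-2^*\int_{\R^N}G(u_n)\to0$ and $\sup_n\|u_n\|<\infty$. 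I then apply the profile decomposition of this paper within $\widetilde X$: along a subsequence, $u_n=u_0+\sum_k\phi_k(\cdot-y_n^k)+o(1)$ in $X^1(\R^N)$, where $u_0\in\widetilde X$ is the weak limit, $|y_n^k|\to\infty$, each nonzero $\phi_k$ solves the limiting potential-free equation $-\Delta\phi=g(\phi)$ in $H^1(\R^N)$, and $c_\tau=J(u_0)+\sum_kJ_\infty(\phi_k)$ with $J_\infty(v):=\tfrac12\int_{\R^N}|\nabla v|^2-\int_{\R^N}G(v)$; moreover $u_0$ is a critical point of $J|_{\widetilde X}$, so (G5) and $\xi\ge0$ yield $J(u_0)=J(u_0)-\tfrac1\nu J'(u_0)u_0\ge0$. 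The key structural point is that, since $(u_n)$ is $\mathcal{O}$-invariant and $\tau$-antisymmetric, each $y_n^k$ may be chosen in the fixed-point set $\{0\}\times\{0\}\times\R^{N-2M}$ of $\mathcal{O}$ (otherwise its $\mathcal{O}$-orbit is a positive-dimensional, widely spread set, producing infinitely many profiles and contradicting $\sup_n\|u_n\|<\infty$; if $N=2M$ this set is $\{0\}$ and $(u_n)$ is already relatively compact); since $\tau$ acts as the identity there, every bubble $\phi_k$ is $\mathcal{O}$-invariant and $\tau$-antisymmetric, lies on the limiting Poho\v{z}aev manifold, and hence $J_\infty(\phi_k)\ge c_{\infty,\tau}$, where $c_{\infty,\tau}$ denotes the infimum of $J_\infty$ over the $\mathcal{O}$-invariant, $\tau$-antisymmetric elements of $\{v\in H^1(\R^N)\setminus\{0\}:\int_{\R^N}|\nabla v|^2=2^*\int_{\R^N}G(v)\}$.

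To rule out bubbles I would prove the \emph{strict} inequality $c_\tau<c_{\infty,\tau}$. By \cite{MR4173560} the level $c_{\infty,\tau}$ is attained by some $V_0$; translating $V_0$ by $(0,0,Re)$ with $e$ a unit vector in $\R^{N-2M}$ leaves $\int_{\R^N}|\nabla V_0|^2$ and $\int_{\R^N}G(V_0)$ unchanged and keeps the translate in $\widetilde X$, while turning $\xi$ into $\int_{\R^N}|\nabla V_0|^2-\tfrac{(N-2)^2}{4}\eta_R$ with $0<\eta_R\to0$ as $R\to\infty$; rescaling the translate onto $\cM$ produces an element of $\cM\cap\widetilde X$ whose energy equals $\tfrac1N\big(\int_{\R^N}|\nabla V_0|^2\big)t_R^{N}$ for some $t_R<1$, so $c_\tau\le\tfrac1N\int_{\R^N}|\nabla V_0|^2\,t_R^N<\tfrac1N\int_{\R^N}|\nabla V_0|^2=J_\infty(V_0)=c_{\infty,\tau}$. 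Since $c_\tau=J(u_0)+\sum_kJ_\infty(\phi_k)\ge\#\{k\}\cdot c_{\infty,\tau}$ whenever at least one bubble is present, there are no bubbles, and hence $u_n\to u_0$ strongly in $X^1(\R^N)$; then $u_0\in\cM\cap\widetilde X$, $J(u_0)=c_\tau\ge2c>0$ (so $u_0\ne0$), and $J'(u_0)=0$ in $(\widetilde X)^*$, so by symmetric criticality $v_0:=u_0$ is the desired non-radial solution minimizing $J$ on $\cM\cap X_\tau\cap X^1_{\mathcal{O}}(\R^N)$. The two genuinely delicate steps are: (i) setting up the profile decomposition in the non-translation-invariant space $X^1(\R^N)$ restricted to $\widetilde X$, and identifying the bubbles as $\tau$-antisymmetric $\mathcal{O}$-invariant solutions of the potential-free $H^1(\R^N)$-problem; and (ii) the strict comparison $c_\tau<c_{\infty,\tau}$, which relies both on the attainment result of \cite{MR4173560} and on the strictly negative sign of the Hardy term away from the origin.
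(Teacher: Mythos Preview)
Your argument is correct and follows the paper's strategy closely: mountain-pass geometry in $X_\tau\cap X^1_{\mathcal{O}}(\R^N)$, a bounded Poho\v{z}aev--Palais--Smale sequence via the augmented functional and (G5), the symmetric profile decomposition with bubbles located along $\{0\}\times\{0\}\times\R^{N-2M}$, and a strict energy comparison against the Mederski level to exclude them. Two small differences are worth noting. For $c_\tau\ge 2c$, the paper does not split $u$ by sign but \emph{geometrically}, setting $u_1=\mathbbm{1}_{\{|x_1|>|x_2|\}}u$ and $u_2=\mathbbm{1}_{\{|x_1|<|x_2|\}}u$; the point is that $\mathcal{O}$-invariance combined with $\tau$-antisymmetry forces $u=0$ on $\{|x_1|=|x_2|\}$, so $u_1\in X^1(\R^N)\cap\cM$ directly, without needing the continuity of $u\mapsto u^+$ on $X^1(\R^N)$ that you invoke via the ground-state substitution. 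For the strict inequality $c_\tau<c_{\infty,\tau}$, the paper simply reuses the optimal-path argument from the proof of Theorem~\ref{Th:Main1-Existence}(a) (an optimal path for $J_\infty$ restricted to $\widetilde X$ exists by \cite{MR4173560}, and $J<J_\infty$ along it), rather than your explicit translate-and-rescale construction; your computation $J=\tfrac1N\theta_R^N\int|\nabla V_0|^2$ with $\theta_R<1$ is correct and gives the same conclusion.
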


The paper is organized as follows. Sections \ref{sect:2} and \ref{sect:3} are devoted to the description of the functional and variational setting, respectively. The profile decomposition is proved in Section \ref{sect:4}. Then, the point (a) of Theorem \ref{Th:Main1-Existence}, namely the existence of a nontrivial solution is shown in \ref{sect:5}. Section \ref{sect:6} is devoted to proving additional properties of the solution, namely points (b) and (c). Section \ref{sect:7} contains the proof of Theorem \ref{Th:Main2}.

In what follows, $\lesssim$ denotes an inequality up to a positive, multiplicative constant and $C$ denotes a generic, positive constant that may vary from one line to another.

\section{Functional setting}\label{sect:2}

Recall the Hardy inequality
\begin{equation} \label{ineq:Hardy}
 \int_{\R^N} \frac{u^2}{|x|^2} \, dx \leq \frac{4}{(N-2)^2} \int_{\R^N} |\nabla u|^2 \, dx, \quad u \in H^1 (\R^N).   
\end{equation}
In particular, it implies that
$$
\| u \|^2 := \int_{\R^N} |\nabla u|^2 - \frac{(N-2)^2}{4} \frac{u^2}{|x|^2} + u^2 \, dx, \quad u \in H^1 (\R^N)
$$
is a norm in $H^1 (\R^N)$. However, due to the optimality of the constant $\frac{(N-2)^2}{4}$ and the fact that optimizers of \eqref{ineq:Hardy} do not belong to $H^1(\R^N)$\footnote{One can construct a function in $X^1 (\R^N)$, which behaves like $|x|^{- \frac{N-2}{2}}$ at the origin. Note that $x \mapsto |x|^{- \frac{N-2}{2}}$ is the optimizer of the Hardy inequality.}, this norm is not complete in $H^1 (\R^N)$. To work in a Hilbert space we will work in the completion of $H^1 (\R^N)$ in this norm. Such a space was already considered in \cite{Suzuki, Mukherjee, TrachanasZographopoulos, TrachanasZographopoulosCor}.  

Let $X^1 (\R^N)$ denote the completion of $H^1 (\R^N)$ with respect to the norm
\begin{equation}\label{normX1-oryg}
\| u \|^2 = \int_{\R^N} |\nabla u|^2 - \frac{(N-2)^2}{4} \frac{u^2}{|x|^2} + u^2\, dx, \quad u \in H^1 (\R^N).
\end{equation}
Then, from \cite[Theorem 1.2]{Frank} (see also \cite{Suzuki}), the following embeddings are continuous
$$
H^1 (\R^N) \subset X^1 (\R^N) \subset H^s(\R^N)
$$
for any $s \in [0, 1)$, where we set $H^0 (\R^N) = L^2 (\R^N)$. In particular, we have continuous embeddings into Lebesgue spaces
\begin{equation}\label{eq:embeddings}
X^1 (\R^N) \subset L^t (\R^N)
\end{equation}
for $t \in [2, 2^*)$, which are locally compact (the embedding $X^1 (\R^N) \subset L^t_{\loc} (\R^N)$ is compact).

Moreover, see \cite{TrachanasZographopoulos, TrachanasZographopoulosCor, VazquezZ}, the norm in $X^1 (\mathbb{R}^N)$ has the following characterization
\begin{align}\label{normX1}
\|u\|^2 = \lim_{\varepsilon \to 0+} \left( \int_{|x| > \varepsilon} |\nabla u|^2 \, dx - \frac{(N-2)^2}{4} \int_{|x| > \varepsilon} \frac{u^2}{|x|^2} \, dx - \frac{N-2}{2} \varepsilon^{-1} \int_{|x|=\varepsilon} u^2 \, dS \right) + \int_{\R^N} u^2 \, dx
\end{align}
for $u \in X^1 (\mathbb{R}^N)$, and the scalar product $\langle \cdot, \cdot \rangle$ on $X^1 (\R^N)$ is given by
\begin{align*}
\langle u,v \rangle = \lim_{\varepsilon \to 0+} \left( \int_{|x| > \varepsilon} \nabla u \nabla v \, dx - \frac{(N-2)^2}{4} \int_{|x| > \varepsilon} \frac{u v}{|x|^2} \, dx - \frac{N-2}{2} \varepsilon^{-1} \int_{|x|=\varepsilon} u v \, dS \right) + \int_{\R^N} uv \, dx.
\end{align*}
In what follows we introduce the notation
\begin{equation}\label{xi:uv}
\xi(u, v) := \lim_{\varepsilon \to 0+} \left( \int_{|x| > \varepsilon} \nabla u \nabla v \, dx - \frac{(N-2)^2}{4} \int_{|x| > \varepsilon} \frac{u v}{|x|^2} \, dx - \frac{N-2}{2} \varepsilon^{-1} \int_{|x|=\varepsilon} u v \, dS \right)
\end{equation}
and
$$
\xi(u) := \xi(u,u) = \lim_{\varepsilon \to 0+} \left( \int_{|x| > \varepsilon} |\nabla u|^2 \, dx - \frac{(N-2)^2}{4} \int_{|x| > \varepsilon} \frac{u^2}{|x|^2} \, dx - \frac{N-2}{2} \varepsilon^{-1} \int_{|x|=\varepsilon} u^2 \, dS \right).
$$
Such a characterization plays a role for functions $u \in X^1 (\R^N) \setminus H^1 (\R^N)$. Otherwise, namely if $u \in H^1 (\R^N)$, it is more convenient to use the formula \eqref{normX1-oryg} instead of \eqref{normX1}.

Observe that $X^1 (\R^N)$ is not translation invariant, and that the maximal translation invariant subspace is $H^1 (\R^N)$. Indeed, if $u \in H^1 (\R^N) \subset X^1 (\R^N)$, then for every $y \in \R^N$, $u(\cdot - y) \in H^1 (\R^N) \subset X^1 (\R^N)$. Moreover we have the following fact.

\begin{Lem}
If $u \in X^1 (\R^N) \setminus H^1 (\R^N)$, then for every $y \in \R^N \setminus \{0\}$, $u(\cdot -y) \not\in X^1 (\R^N)$.
\end{Lem}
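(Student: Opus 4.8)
The plan is to argue by contradiction using the local behaviour near the origin that distinguishes $X^1(\R^N)$ from $H^1(\R^N)$. Suppose $u\in X^1(\R^N)\setminus H^1(\R^N)$ but, for some $y\neq 0$, the translate $v:=u(\cdot-y)$ also lies in $X^1(\R^N)$. The key observation is a \emph{locality} principle: membership in $X^1$ versus $H^1$ is determined entirely by behaviour in a neighbourhood of the singularity $0$, since away from $0$ the weight $(N-2)^2/(4|x|^2)$ is bounded and the $X^1$-norm is comparable to the $H^1$-norm. More precisely, I would first establish that for any $R>0$ and any ball $B=B(x_0,R)$ with $0\notin\overline B$, a function $w\in X^1(\R^N)$ satisfies $w|_B\in H^1(B)$, and conversely that if $w\in X^1(\R^N)$ then $w|_{B(0,R)^c}\in H^1(B(0,R)^c)$ — both because on $\{|x|\ge\delta\}$ the quadratic form $\xi$ together with $\int w^2$ dominates $\int(|\nabla w|^2+w^2)$ up to a constant depending on $\delta$, using the characterisation \eqref{normX1}. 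Conversely, $u\in X^1(\R^N)$ lies in $H^1(\R^N)$ if and only if its restriction to some (equivalently every) ball $B(0,\rho)$ around the origin lies in $H^1(B(0,\rho))$; this is essentially the content of the embedding chain $H^1\subset X^1\subset H^s$ combined with the fact that the "defect" is concentrated at $0$.

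With this locality in hand, the contradiction is immediate. On one hand, since $u\in X^1(\R^N)$, its restriction to the region $\{|x-y|<|y|/2\}$ (which avoids the origin, as $0\notin B(y,|y|/2)$) is in $H^1$ of that region. Hence $v=u(\cdot-y)$, restricted to $B(0,|y|/2)$, is in $H^1(B(0,|y|/2))$. But $v\in X^1(\R^N)$ by assumption, and a function in $X^1(\R^N)$ whose restriction to a ball around the origin lies in $H^1$ of that ball must lie in $H^1(\R^N)$ globally — because on the complement of that ball the $X^1$-norm already controls the full $H^1$-norm. Therefore $v\in H^1(\R^N)$, and translating back by $-y$ (note $H^1$ \emph{is} translation invariant), $u=v(\cdot+y)\in H^1(\R^N)$, contradicting $u\notin H^1(\R^N)$.

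The step I expect to require the most care is making precise the claim that $u\in X^1(\R^N)$ with $u|_{B(0,\rho)}\in H^1(B(0,\rho))$ forces $u\in H^1(\R^N)$. One clean way: take the density of $H^1(\R^N)$ in $X^1(\R^N)$, approximate $u$ by $u_n\in H^1(\R^N)$ with $\|u_n-u\|\to 0$; since $\|\cdot\|$ controls the $H^1$-norm on $\{|x|>\rho/2\}$ (with a $\rho$-dependent constant) and we separately control $u$ near the origin in $H^1$, one assembles a global $H^1$ bound on $(u_n)$ via a cutoff $\chi$ supported in $B(0,\rho)$ and $1-\chi$ supported in $\{|x|>\rho/2\}$: $\chi u_n\to\chi u$ in $H^1$ using local compactness \eqref{eq:embeddings} and the hypothesis $\chi u\in H^1$, while $(1-\chi)u_n$ is Cauchy in $H^1(\R^N)$ because there the norm $\|\cdot\|$ is equivalent to the $H^1$-norm. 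Passing to the limit gives $u\in H^1(\R^N)$. The delicate point throughout is handling the boundary term $\varepsilon^{-1}\int_{|x|=\varepsilon}u^2\,dS$ in \eqref{normX1} when localising, but since all cutoffs vanish near $0$ this term drops out of the relevant estimates, which is precisely why the argument works.
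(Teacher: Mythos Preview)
Your approach is correct in spirit and rests on exactly the same locality principle the paper uses, but you have taken a detour where the paper is direct. The paper's argument is simply: any $w\in X^1(\R^N)$ satisfies $\nabla w\in L^2(\R^N\setminus B(0,\varepsilon))$ for every $\varepsilon>0$ (this is read off from the norm characterisation \eqref{normX1}); since $u\notin H^1(\R^N)$, necessarily $\nabla u\notin L^2(B(0,\varepsilon))$ for every $\varepsilon>0$. Taking $\varepsilon<|y|/2$, the translate $u(\cdot-y)$ then has $\nabla u(\cdot-y)\notin L^2(B(y,\varepsilon))$, while $B(y,\varepsilon)\subset\R^N\setminus B(0,\varepsilon)$, so $u(\cdot-y)$ fails the necessary condition for $X^1$-membership. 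That is the whole proof.

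Your route instead assumes $v=u(\cdot-y)\in X^1$ and works to show $v\in H^1$ globally, which then forces $u\in H^1$ by translation invariance of $H^1$. This works, but the patching step is heavier than needed, and one detail is not right as written: you claim $\chi u_n\to\chi u$ in $H^1$ ``using local compactness \eqref{eq:embeddings}'', but local compactness of $X^1\hookrightarrow L^t_{\mathrm{loc}}$ gives only $L^t_{\mathrm{loc}}$-convergence, not $H^1$-convergence. Fortunately you do not actually need this: it suffices to know $\chi v\in H^1$ directly from the hypothesis $v|_{B(0,\rho)}\in H^1(B(0,\rho))$, and separately that $(1-\chi)v\in H^1$ as the $H^1$-limit of the Cauchy sequence $(1-\chi)v_n$ (identified via $L^2_{\mathrm{loc}}$-convergence). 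With that correction your argument goes through. The moral is that once you have isolated the key fact ``$X^1$-functions are $H^1$ away from the origin'', it is quicker to use it to \emph{exclude} $u(\cdot-y)$ from $X^1$ than to \emph{include} it in $H^1$.
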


\begin{proof}
Let $u \in X^1 (\R^N) \setminus H^1 (\R^N)$. Note that, thanks to e.g. the characterization of the norm \eqref{normX1} from \cite{TrachanasZographopoulos, TrachanasZographopoulosCor}, we know that $\nabla u \in L^2 (\R^N \setminus B(0,\varepsilon))$ for every $\varepsilon > 0$. Moreover, since $u \not\in H^1 (\R^N)$, $\nabla u \not\in L^2 (B(0,\varepsilon))$ for every $\varepsilon > 0$. Fix $y \in \R^N \setminus \{0\}$ and choose $\varepsilon < \frac{|y|}{2}$. Then $\nabla u(\cdot - y) \not\in L^2 (B(y,\varepsilon))$, but $B(y,\varepsilon) \subset \R^N \setminus B(0,\varepsilon)$, so $\nabla u (\cdot -y ) \not \in L^2 (\R^N \setminus B(0,\varepsilon))$, meaning that $u (\cdot -y ) \not\in X^1 (\R^N)$.
\end{proof}

This fact makes the argument based on the Lions' concentration-compactness lemma and translations significantly more challenging than in the standard case. We overcome this difficulty by the use of continuous embeddings into $H^s (\R^N)$, which is translation-invariant, and by making use of the regularity theory to find that limits of translated Poho\v{z}aev-Palais-Smale sequences are indeed in $X^1 (\R^N)$, and even in $H^1_{\mathrm{loc}} (\R^N)$. For this purpose we show that such a limit is a solution to an elliptic problem without the singular potential in the distributional sense. Then we make use of Weyl's lemma for the Poisson problem to see that such a solution lies in $H^1_{\mathrm{loc}} (\R^N)$. Then, to show even more - the limit point belongs globally to $H^1(\R^N)$ - we make use of a variant of the Caccioppoli inequality.

\section{Variational setting}\label{sect:3}

Define $J : X^1 (\R^N) \rightarrow \R$ by
$$
J(u) = \frac12 \xi(u) - \int_{\R^N} G(u) \, dx.
$$
Then, in view of (G1)--(G3) and embeddings \eqref{eq:embeddings}, $J$ is of $\cC^1$ class and
$$
J'(u)(v) = \xi(u,v) - \int_{\R^N} g(u) v \, dx.
$$
Observe that on $X^1 (\R^N)$ we can introduce an equivalent norm
$$
\|u\|_\ell^2 := \xi(u) + \ell \int_{\R^N} u^2 \, dx
$$
and the scalar product
$$
\langle u, v \rangle_\ell := \xi(u,v) + \ell \int_{\R^N} uv \, dx.
$$
Then $J$ and its derivative can be rewritten as
$$
J(u) = \frac12 \|u\|_\ell^2 - \int_{\R^N} F(u) \, dx, \quad J'(u)(v) = \langle u,v\rangle_\ell - \int_{\R^N} f(u) v \, dx, \quad u,v \in X^1 (\R^N).
$$
Note that if $u \in X^1 (\R^N)$ is a critical point of $J$, then $u$ is a weak solution to \eqref{eq:main} on $\R^N \setminus \{0\}$. Indeed, take any test function $\varphi \in \cC_0^\infty (\R^N \setminus \{0\})$ and consider the equality $J'(u)(\varphi) = 0$. Then
$$
\xi(u, \varphi) = \int_{\R^N} g(u) \varphi \, dx.
$$
Recall \eqref{xi:uv} and, since $\supp \varphi \subset \R^N \setminus B(0,\varepsilon)$ for sufficiently small $\varepsilon > 0$, we see that
$$
\int_{|x| > \varepsilon} \nabla u \nabla \varphi \, dx = \int_{\R^N} \nabla u \nabla \varphi \, dx, \quad \int_{|x| > \varepsilon} \frac{u \varphi}{|x|^2} \, dx = \int_{\R^N} \frac{u \varphi}{|x|^2} \, dx, \quad \int_{|x|=\varepsilon} u \varphi \, dS = 0.
$$
Hence
$$
\int_{\R^N} \nabla u \nabla \varphi \, dx - \frac{(N-2)^2}{4} \int_{\R^N} \frac{u \varphi}{|x|^2} \, dx = \int_{\R^N} g(u) \varphi \, dx
$$
and $u$ is a weak solution to \eqref{eq:main} on $\R^N \setminus \{0\}$.

We are going to show that $J$ has the mountain pass geometry.

\begin{Lem}\label{r}
There is $r > 0$ such that $\inf_{\|u\|_\ell =r} J(u) > 0$. 
\end{Lem}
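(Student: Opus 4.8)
The plan is to verify the standard mountain pass geometry at the origin, using the representation $J(u) = \tfrac12\|u\|_\ell^2 - \int_{\R^N} F(u)\,dx$ so that everything reduces to an $L^2$- and $L^p$-estimate of $\int_{\R^N} F(u)\,dx$. The first step is to establish, for every $\eps>0$, a pointwise bound of the form
\[
F(s) \le \frac{\eps}{2}\, s^2 + C_\eps\, |s|^p \qquad \text{for all } s\in\R,
\]
with a suitable constant $C_\eps>0$ and the exponent $p\in(2,2^*)$ from (F3). Near $s=0$ this follows from (F2): since $\limsup_{s\to0} f(s)/s = 0$ and $\liminf_{s\to0} f(s)/s>-\infty$, for $|s|$ small one has $f(s)s \le \eps s^2$, and integrating gives $F(s)\le\frac{\eps}{2}s^2$ — the one‑sided nature of the $\limsup$ is harmless provided one treats $s>0$ and $s<0$ separately when bounding $\int_0^s f(\tau)\,d\tau$. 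For $|s|$ large, (F3) yields $|f(s)|\le\eps|s|^{p-1}$, hence $|F(s)|\lesssim C_\eps|s|^p$ after integration; on the remaining compact range the continuity (F1) bounds $F$, and that bound is absorbed into $C_\eps|s|^p$.

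Next I would integrate this inequality over $\R^N$ and invoke the continuous embeddings recorded in Section~\ref{sect:2}: since $p\in(2,2^*)$,
\[
\int_{\R^N} F(u)\,dx \le \frac{\eps}{2}\,\|u\|_{L^2}^2 + C_\eps\,\|u\|_{L^p}^p \le \frac{\eps}{2\ell}\,\|u\|_\ell^2 + C\,C_\eps\,\|u\|_\ell^p ,
\]
where I used $\|u\|_{L^2}^2 \le \ell^{-1}\|u\|_\ell^2$ (because $\xi\ge 0$, so $\|u\|_\ell^2 = \xi(u)+\ell\|u\|_{L^2}^2\ge\ell\|u\|_{L^2}^2$), together with the equivalence of $\|\cdot\|_\ell$ and $\|\cdot\|$ and the embedding $X^1(\R^N)\hookrightarrow L^p(\R^N)$ from \eqref{eq:embeddings}. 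Choosing $\eps=\ell/2$ gives
\[
J(u) \ge \frac14\,\|u\|_\ell^2 - C'\|u\|_\ell^p = \|u\|_\ell^2\Bigl(\tfrac14 - C'\|u\|_\ell^{p-2}\Bigr).
\]
Since $p>2$, any $r>0$ small enough that $C'r^{p-2}<\tfrac14$ yields $J(u)\ge r^2\bigl(\tfrac14 - C'r^{p-2}\bigr)>0$ for every $u$ with $\|u\|_\ell=r$, which is exactly the assertion.

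I do not expect a genuine obstacle: this is the usual ``sub‑quadratic at zero, sub‑critical at infinity'' argument, and the singular Hardy term only enters through the embeddings already established in Section~\ref{sect:2}, which behave just as in the $H^1$ setting. The only mild point deserving care is that (F2) is stated with $\limsup$ and $\liminf$ rather than a two‑sided limit, so the elementary estimate $F(s)\le\frac{\eps}{2}s^2$ near $0$ must be derived by splitting into the cases $s>0$ and $s<0$; no genuine difficulty arises.
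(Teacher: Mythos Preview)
Your proposal is correct and follows essentially the same route as the paper: the paper obtains the pointwise bound $G(s)\le -\tfrac{\ell}{4}s^2 + C|s|^p$ from (G1)--(G3), which is exactly your bound $F(s)\le \tfrac{\eps}{2}s^2 + C_\eps|s|^p$ with $\eps=\ell/2$ rewritten via $G=-\tfrac{\ell}{2}s^2+F$, and then arrives at the identical estimate $J(u)\ge \tfrac14\|u\|_\ell^2 - C\|u\|_\ell^p$.
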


\begin{proof}
Note that, from (G1)--(G3) there is $C > 0$ such that $G(u) \leq - \frac{\ell}{4} |u|^2 + C |u|^{p}$. Hence
\begin{align*}
J(u) &\geq \frac12 \xi(u) + \frac{\ell}{4} \int_{\R^N} |u|^2 \, dx - C \int_{\R^N} |u|^{p} \, dx \\
&= \frac 14 \xi(u)+\frac{1}{4} \|u\|_\ell^2 - C \int_{\R^N} |u|^{p} \, dx \geq \frac{1}{4} \|u\|_\ell^2 - C \|u\|_\ell^p.
\end{align*}
Since $p > 2$, we get $\inf_{\|u\|_\ell =r} J(u) > 0$ for sufficiently small $r > 0$.
\end{proof}

\begin{Lem}\label{v}
There is $v \in X^1 (\R^N)$ such that $J(v) < 0$ and $\|v\|_\ell > r$, where $r$ is given in Lemma \ref{r}.
\end{Lem}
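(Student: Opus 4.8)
The plan is to exhibit an explicit one-parameter family of functions along which $J$ becomes negative, using the standard Berestycki--Lions scaling (the dilation $u_t(x) := u(x/t)$) together with a suitably chosen profile. First I would fix a profile $w \in \cC_0^\infty(\R^N \setminus \{0\}) \subset H^1(\R^N) \subset X^1(\R^N)$; working with compactly supported test functions away from the origin is convenient because on such functions $\xi(w) = \int_{\R^N} |\nabla w|^2\,dx$ and the boundary/limit terms in \eqref{xi:uv} vanish, so $\xi$ behaves exactly like the classical Dirichlet integral. I would choose $w$ so that $\int_{\R^N} G(w)\,dx > 0$; this is possible by (G4), exactly as in \cite{BL}: pick the value $\zeta_0$ with $G(\zeta_0) > 0$, take $w$ equal to $\zeta_0$ on a large region where $|x|$ is bounded away from $0$ and let it decay to $0$ on a thin annular collar, so that the (negative) contribution of $G(w)$ on the collar is dominated by the (positive) bulk term $G(\zeta_0) \cdot |\{w = \zeta_0\}|$.

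Next I would compute the scaling behaviour of $J$ along $w_t(x) = w(x/t)$ for $t > 0$. Since $w_t$ is again in $\cC_0^\infty(\R^N \setminus \{0\})$, we get the clean identities
\[
\xi(w_t) = t^{N-2} \int_{\R^N} |\nabla w|^2\,dx, \qquad \int_{\R^N} G(w_t)\,dx = t^{N} \int_{\R^N} G(w)\,dx,
\]
hence
\[
J(w_t) = \frac{t^{N-2}}{2} \int_{\R^N} |\nabla w|^2\,dx - t^{N} \int_{\R^N} G(w)\,dx.
\]
Because $N > N-2$ and $\int_{\R^N} G(w)\,dx > 0$, the dominant term as $t \to \infty$ is the negative one, so $J(w_t) \to -\infty$; in particular there is $t_0 > 0$ with $J(w_{t_0}) < 0$. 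Separately, Lemma \ref{r} gives $r > 0$ with $\inf_{\|u\|_\ell = r} J > 0$; since $J(w_t) < 0 \le \inf_{\|u\|_\ell = r} J$, necessarily $\|w_{t_0}\|_\ell \ne r$, and since $\|w_t\|_\ell^2 = t^{N-2}\int|\nabla w|^2 + \ell t^N \int w^2 \to \infty$, by enlarging $t_0$ if needed we may also ensure $\|w_{t_0}\|_\ell > r$. Setting $v := w_{t_0}$ finishes the proof.

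There is no serious obstacle here; the only point requiring a little care is the construction of the profile $w$ with $\int_{\R^N} G(w)\,dx > 0$ under the weak hypothesis (G4), which only asserts positivity of $G$ at a single point $\zeta_0$ and allows $G$ to be negative (indeed $G(s) \le -\tfrac{\ell}{2}s^2 + F(s)$ with $G(s) \sim -\tfrac{\ell}{2}s^2$ near $0$ by (G2)). This is handled by the classical plateau-with-thin-collar construction of \cite{BL}: choose $R$ large, let $w = \zeta_0$ on $B(0,2R) \setminus B(0,1)$ and interpolate linearly down to $0$ on $B(0,1)$ and on $B(0,2R+1) \setminus B(0,2R)$, so $\int G(w) \ge c R^N G(\zeta_0) - C R^{N-1} \max_{|s|\le \zeta_0}|G(s)| > 0$ for $R$ large. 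Alternatively, one can invoke the fact, standard in this circle of ideas and essentially contained in \cite{BL} (and reproved under (F1)--(F5) in the later sections via the Pohožaev set $\cM$), that $\cM$ is nonempty, which already produces a function with $\xi(u) = 2^* \int G(u) > 0$; combined with the dilation above this again yields the desired $v$. I will spell out the thin-collar construction since it is self-contained and does not presuppose nonemptiness of $\cM$.
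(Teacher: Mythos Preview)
Your approach is essentially the same as the paper's---both use the Berestycki--Lions plateau-with-collar construction, the paper via a single expanding profile $w_R$ and you via a fixed profile followed by dilation. One slip: it is \emph{not} true that $\xi(w) = \int_{\R^N}|\nabla w|^2\,dx$ for $w \in \cC_0^\infty(\R^N\setminus\{0\})$; the Hardy term $-\tfrac{(N-2)^2}{4}\int |x|^{-2}w^2$ is still present (only the surface term in \eqref{xi:uv} drops out). This is harmless, however, because the Hardy term scales the same way ($\xi(w_t)=t^{N-2}\xi(w)$), and since $0\le\xi(w)\le\int|\nabla w|^2$ your displayed formula is an upper bound for $J(w_t)$ and your conclusion $J(w_t)\to-\infty$, $\|w_t\|_\ell\to\infty$ remains valid.
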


\begin{proof}
Following the idea from \cite{BL}, for any $R > 0$, we define
$$
w_R(x) := \left\{ \begin{array}{ll}
\zeta_0 & \quad |x| < R \\
\zeta_0 (R+1-|x|) & \quad R \leq |x| \leq R+1 \\
0 & \quad |x| > R+1
\end{array} \right.,
$$
where $\zeta_0$ is defined in (G4).
It is clear that $w_R \in H^1 (\R^N) \subset X^1 (\R^N)$ for every $R > 0$. Moreover $w_R(x) \geq 0$ for $x \in \R^N$. Recall that
$$
| B(0, R) | = c_N R^N
$$
for some $c_N > 0$. Hence $| B(0,R+1) \setminus B(0,R) | = c_N \left( (R+1)^N - R^N \right) \lesssim  R^{N-1}$. Hence
\begin{align*}
\int_{\R^N} G(w_R) \, dx &\geq G(\zeta_0) c_N R^N - \sup_{\zeta \in [0, \zeta_0]} G(\zeta) \alpha_N R^{N-1}, \quad \mbox{for some } \alpha_N > 0, \\
\int_{\R^N} |\nabla w_R|^2 \, dx & \lesssim R^{N-1}, \\
\int_{\R^N} |w_R|^2 \, dx & \gtrsim R^N.
\end{align*}
Then
$$
J(w_R) \leq \frac12 \int_{\R^N} |\nabla w_R|^2 \, dx - \int_{\R^N} G(w_R) \, dx \leq C_1 R^{N-1} - C_2 R^{N} + C_3 R^{N-1}
$$
with $C_2 > 0$. Moreover 
$$
\| w_R \|_\ell^2 \geq \ell \int_{\R^N} |w_R|^2 \, dx \gtrsim R^N.
$$
Thus $J(w_R) < 0$ and $\|w_R\|_\ell > r$ for $R > 0$ large enough.
\end{proof}

Lemma \ref{r} and Lemma \ref{v} imply that $J$ has the mountain pass geometry. We define the mountain pass level
\begin{equation}\label{def:c}
c := \inf_{\gamma \in \Gamma} \sup_{t \in [0,1]} J(\gamma(t)),
\end{equation}
where the family of paths $\Gamma$ is given by
\begin{equation}\label{def:Gamma}
\Gamma := \left\{ \gamma \in \cC ([0,1]; X^1 (\R^N)) \ : \ \gamma(0) = 0, \ J(\gamma(1)) < 0 \right\}.
\end{equation}

The following fact is a well-known consequence of the deformation lemma and we include it here for the reader's convenience.

\begin{Lem}\label{lemma:critical_point_on_path}
If there is $\gamma \in \Gamma$ and $t^*  \in (0,1)$ with 
$$
c = J(\gamma(t^*)) > J(\gamma(t)) \quad \mbox{for } t \neq t^*,
$$
then $J'(\gamma(t^*)) = 0$.
\end{Lem}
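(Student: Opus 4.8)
The plan is to argue by contradiction using the quantitative deformation lemma. Suppose $u^* := \gamma(t^*)$ is not a critical point of $J$, so $J'(u^*) \neq 0$. Since $J \in \cC^1(X^1(\R^N))$, there is $\delta > 0$ and $\alpha > 0$ such that $\|J'(u)\|_{(X^1)^*} \geq \alpha$ whenever $\|u - u^*\|_\ell \leq \delta$. The quantitative deformation lemma (in the form, e.g., of Willem's book) then provides $\eps \in (0, \alpha \delta / 8)$ and a homeomorphism $\eta : X^1(\R^N) \to X^1(\R^N)$ such that: (i) $\eta(u) = u$ whenever $|J(u) - c| \geq 2\eps$ or $\|u - u^*\|_\ell \geq \delta$; (ii) $\eta$ pushes the sublevel set $J^{c+\eps}$ near $u^*$ down into $J^{c-\eps}$, i.e. $J(\eta(u)) \leq c - \eps$ for $u$ with $J(u) \leq c + \eps$ and $\|u - u^*\|_\ell \leq \delta/2$; (iii) $J(\eta(u)) \leq J(u)$ for all $u$. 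One applies this with $\eps$ chosen additionally small enough (depending on $\gamma$, $t^*$, $\delta$) that $\|\gamma(t) - u^*\|_\ell > \delta/2$ on the closed set where $|J(\gamma(t)) - c| \geq \eps$ is violated only near $t^*$; more precisely, by continuity of $\gamma$ and the strict maximum hypothesis, for small enough $\eps$ the set $\{t \in [0,1] : J(\gamma(t)) \geq c - \eps\}$ is contained in a neighborhood of $t^*$ on which $\|\gamma(t) - u^*\|_\ell < \delta/2$.

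Next I would define the deformed path $\tilde\gamma := \eta \circ \gamma$. Since $\gamma(0) = 0$ and $J(0) = 0 < c - 2\eps$ (shrinking $\eps$ if needed, using $c > 0$ from Lemma~\ref{r}), property~(i) gives $\tilde\gamma(0) = \eta(0) = 0$; similarly $J(\gamma(1)) < 0 < c - 2\eps$ forces $\tilde\gamma(1) = \gamma(1)$, so $J(\tilde\gamma(1)) = J(\gamma(1)) < 0$, hence $\tilde\gamma \in \Gamma$. It remains to show $\sup_{t \in [0,1]} J(\tilde\gamma(t)) < c$, which contradicts the definition \eqref{def:c} of $c$ as an infimum over $\Gamma$. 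Split $[0,1]$ into two regions. On the set $A$ where $\|\gamma(t) - u^*\|_\ell \geq \delta/2$, we have $J(\gamma(t)) < c$ by the strict maximum hypothesis (every point except $t^*$ has $J(\gamma(t)) < c$, and the strict inequality is uniform on the compact set $A$ since $t^* \notin A$), say $J(\gamma(t)) \leq c - \eta_0$ for some $\eta_0 > 0$; combining with $\eps < \eta_0$ and property~(iii), $J(\tilde\gamma(t)) \leq J(\gamma(t)) \leq c - \eta_0 < c$. On the complementary set $B$ where $\|\gamma(t) - u^*\|_\ell < \delta/2$, we have $J(\gamma(t)) \leq c \leq c + \eps$, so property~(ii) yields $J(\tilde\gamma(t)) = J(\eta(\gamma(t))) \leq c - \eps < c$. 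Therefore $\sup_t J(\tilde\gamma(t)) \leq c - \min(\eta_0, \eps) < c$, the desired contradiction.

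The main obstacle — really the only delicate point — is the bookkeeping that matches the scale of the deformation to the geometry of the single path $\gamma$: one must choose $\eps$ small enough that the ``high'' part of the path (where $J(\gamma(t))$ is within $\eps$ of $c$) stays inside the ball $\|\gamma(t) - u^*\|_\ell < \delta/2$ where the deformation lemma guarantees a genuine drop, while simultaneously the ``outside'' part of the path already sits strictly below $c$ by a margin exceeding $\eps$. This is where the hypothesis that $t^*$ is a \emph{strict} interior maximum of $J \circ \gamma$ is used: it guarantees both that $\{t : J(\gamma(t)) \geq c - \eps\}$ shrinks to $\{t^*\}$ as $\eps \to 0^+$ (so it is eventually inside the $\delta/2$-ball by continuity of $\gamma$) and that $J(\gamma(t)) < c$ with a uniform gap away from $t^*$. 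Everything else is a direct invocation of the standard deformation lemma, which is why the statement is advertised as ``well-known.''
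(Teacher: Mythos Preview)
Your proposal is correct and follows essentially the same approach as the paper: argue by contradiction and apply the quantitative deformation lemma (Willem, Lemma~2.3) to push the optimal path strictly below the level $c$, contradicting the definition of $c$. Your write-up is more explicit about the bookkeeping (splitting into regions $A$ and $B$ and extracting a uniform gap $\eta_0$), whereas the paper simply notes that $J(\eta(1,\gamma(t))) < c$ pointwise for every $t$ and concludes $\sup_t J(\eta(1,\gamma(t))) < c$ by continuity on the compact interval $[0,1]$; both routes are valid.
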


\begin{proof}
Assume by contradiction that $J'(\gamma(t^*)) \neq 0$. Since $J$ is of $\cC^1$ class, we may choose $\delta > 0$ and $\varepsilon > 0$ such that
$$
\inf \{ \| J'(v) \| \ : \ \|v - \gamma(t^*)\| \leq \delta \} > \frac{8\varepsilon}{\delta}.
$$
Then, by the deformation lemma \cite[Lemma 2.3]{W}, we may find a flow $\eta \in \cC ([0,1] \times X^1 (\R^N); X^1 (\R^N))$ such that $\eta(1, \gamma(\cdot)) \in \Gamma$ and $J(\eta(1,\gamma(t^*))) \leq c - \varepsilon < c$. Moreover, for every $t \neq t^*$, $J(\eta(1,\gamma(t))) \leq J(\gamma(t)) < c$. Hence, for every $t \in [0,1]$ we have $J(\eta(1,\gamma(t))) < c$. Thus $\sup_{t \in [0,1]} J(\eta(1,\gamma(t))) < c$ and we obtain a contradiction.
\end{proof}
We define
\[
    P(u) 
    = \xi(u) - 2^* \int_{\R^N} G(u) \, dx.
    \]

\begin{Rem}
    Note that, because of the presence of singular terms in $\xi(u)$, it is not known if all critical points $u\in X^1(\R^N)$ of $J$ satisfy the condition $P(u)=0$.
\end{Rem}

To find a Poho\v{z}aev-Palais-Smale sequence, we will use the technique of the augmented functional introduced by Jeanjean \cite{Jeanjean} (see also \cite{HirataIkomaTanaka, JeanjeanTanaka}). Then, thanks to the density of $H^1 (\R^N)$ in $X^1(\R^N)$, we can choose such a sequence in $H^1(\R^N)$, which will later simplify notations and computations, but it is not necessary for the argument to work.

\begin{Lem}\label{Lem:ExistencePPS}
There is $(u_n) \subset H^1 (\R^N)$ such that
$$
J(u_n) \to c, \quad J'(u_n) \to 0, \quad P(u_n) \to 0.
$$
\end{Lem}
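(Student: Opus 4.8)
\textbf{Proof strategy for Lemma \ref{Lem:ExistencePPS}.}

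The plan is to follow the augmented-functional construction of Jeanjean \cite{Jeanjean} (in the form used by \cite{HirataIkomaTanaka, JeanjeanTanaka}), adapted to the space $X^1(\R^N)$, and then invoke density of $H^1(\R^N)$ at the very end. Introduce the auxiliary functional $\widetilde J : \R \times X^1(\R^N) \to \R$ defined by
$$
\widetilde J(s,u) := J\big(u(e^{-s}\cdot)\big) = \frac{e^{(N-2)s}}{2}\,\xi(u) - e^{Ns}\int_{\R^N} G(u)\,dx,
$$
using the scaling behaviour $\xi(u(e^{-s}\cdot)) = e^{(N-2)s}\xi(u)$ and $\int G(u(e^{-s}\cdot)) = e^{Ns}\int G(u)$ of the two pieces (this is the key structural point: the singular quadratic form scales exactly like the Dirichlet energy, since the Hardy potential is $2$-homogeneous, so the classical Pohožaev scaling carries over verbatim). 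Equip $\R\times X^1(\R^N)$ with the product norm. One checks $\widetilde J\in \cC^1$, that $\widetilde J(0,u)=J(u)$, and that $\partial_s\widetilde J(0,u) = \frac{N-2}{2}\xi(u) - N\int G(u) = \frac{N-2}{2}P(u)$ up to a fixed positive multiple, so that a Palais--Smale-type sequence for $\widetilde J$ automatically produces the Pohožaev information.

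Next I would transfer the mountain-pass geometry: set
$$
\widetilde\Gamma := \big\{ \widetilde\gamma\in\cC([0,1];\R\times X^1(\R^N)) : \widetilde\gamma(0)=(0,0),\ \widetilde J(\widetilde\gamma(1))<0 \big\},\qquad \widetilde c := \inf_{\widetilde\gamma\in\widetilde\Gamma}\sup_{t\in[0,1]}\widetilde J(\widetilde\gamma(t)).
$$
Embedding $\Gamma$ into $\widetilde\Gamma$ via $\gamma\mapsto(0,\gamma)$ gives $\widetilde c\le c$; conversely, given $\widetilde\gamma=(s(\cdot),\eta(\cdot))\in\widetilde\Gamma$ the path $t\mapsto \eta(t)(e^{-s(t)}\cdot)$ lies in $\Gamma$ (using Lemma \ref{r} and Lemma \ref{v} to see the endpoint conditions are preserved, and that the scaling map $(s,u)\mapsto u(e^{-s}\cdot)$ is continuous $\R\times X^1\to X^1$, which follows from density of $H^1$ and the explicit scaling of the norm) with the same sup of $J$-values, hence $c\le\widetilde c$; so $\widetilde c=c$. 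Applying Ekeland's variational principle / the deformation lemma on the complete metric space $\R\times X^1(\R^N)$ at the level $\widetilde c$ yields a sequence $(s_n,v_n)$ with $\widetilde J(s_n,v_n)\to c$, $\partial_s\widetilde J(s_n,v_n)\to 0$ and $\partial_u\widetilde J(s_n,v_n)\to 0$ in $(X^1)^*$. Setting $u_n := v_n(e^{-s_n}\cdot)$ and unwinding the scaling (multiplying the $u$-derivative by the bounded operator induced by the scaling and noting it is an isometry-type change of variables up to explicit powers of $e^{s_n}$), one gets $J(u_n)\to c$, $J'(u_n)\to 0$ and $P(u_n)\to 0$.

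The main technical obstacle I anticipate is precisely the bookkeeping of the scaling operator $u\mapsto u(e^{-s}\cdot)$ on $X^1(\R^N)$: one must check it maps $X^1$ continuously into itself with the claimed effect on $\xi$, that it is jointly continuous in $(s,u)$, and that the induced rescaling of $J'$ and $\partial_u\widetilde J$ is controlled uniformly — this would be immediate on $H^1$ by a change of variables, and on $X^1$ it should follow by density together with the characterization \eqref{normX1} of $\|\cdot\|$ (the boundary term $\varepsilon^{-1}\int_{|x|=\varepsilon}u^2\,dS$ also scales homogeneously). A minor point is that we must ensure the extracted sequence can be taken in $H^1(\R^N)$: since $H^1(\R^N)$ is dense in $X^1(\R^N)$ and $J, J', P$ are continuous, one replaces each $u_n$ by a sufficiently close $H^1$ element without affecting the three limits, as stated in the remark preceding the lemma. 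Everything else (the $\cC^1$ regularity of $\widetilde J$, verification of mountain-pass geometry for $\widetilde J$, the deformation argument) is routine given Lemmas \ref{r}--\ref{lemma:critical_point_on_path}.
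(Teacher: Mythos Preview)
Your proposal is correct and follows essentially the same route as the paper's proof: both introduce the augmented functional $\widetilde J(s,u)=J(u(e^{-s}\cdot))$, check that its mountain-pass level coincides with $c$ via the bijection between $\widetilde\Gamma$ and $\Gamma$ induced by the scaling map, apply Ekeland's variational principle to obtain a sequence $(s_n,v_n)$ with vanishing derivative, read off $P\to 0$ from the $s$-component and $J'\to 0$ from the $u$-component after rescaling, and finally invoke density of $H^1(\R^N)$ in $X^1(\R^N)$ together with continuity of $J,J',P$ to land in $H^1$. The one small point the paper makes explicit and you leave implicit is that the Ekeland construction can be run against paths of the form $(0,\gamma)$, forcing $s_n\to 0$; this is what guarantees the factors $e^{s_n}$ arising when you convert $\partial_u\widetilde J(s_n,v_n)\to 0$ into $J'(u_n)\to 0$ stay bounded.
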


\begin{proof}
Set $\Phi : \R \times X^1 (\R^N) \rightarrow X^1 (\R^N)$ given by
$$
\Phi(t, u)(x) := u(e^{-t} x).
$$
Then
$$
J(\Phi(t, u)) = \frac{e^{(N-2)t}}{2} \xi(u) - e^{Nt} \int_{\R^N} G(u) \, dx.
$$
It is clear that $J \circ \Phi$ is of class $\cC^1 (\R \times X^{1}(\R^N))$. 
Then we consider the following family of paths
$$
\widetilde{\Ga} := \left\{ \widetilde{\gamma} \in \cC ([0,1]; \R \times X^1 (\R^N)) \ : \ \widetilde{\gamma}(0)=(0,0),\, (J \circ \Phi)(\widetilde{\gamma}(1)) < 0\right\}.
$$
Note that $\Ga = \{ \Phi \circ \widetilde{\gamma} \ : \ \widetilde{\gamma} \in \widetilde{\Ga} \}$. Hence
$$
c = \inf_{\widetilde{\gamma} \in \widetilde{\Ga}} \sup_{t \in [0,1]} (J \circ \Phi)(\widetilde{\gamma}(t)).
$$
For every $\varepsilon \in \left(0, \frac{c}{2} \right)$ we may choose a path $\gamma \in \Ga$ such that $\sup_{t \in [0,1]} (J \circ \Phi)(0,\gamma(t)) \leq c + \varepsilon$. Hence, by Ekeland's variational principle \cite[Theorem 2.8]{W}, there exists $(t, u) \in \R \times X^1 (\R^N)$ such that
\begin{align*}
c - 2\varepsilon \leq (J \circ \Phi)(t,u) \leq c + 2\varepsilon, \\
\inf_{s \in [0,1]} \left( |t|^2 + \| u - \gamma(s) \|^2 \right)^{1/2} \leq 2 \sqrt{\varepsilon}, \\
\left\| (J \circ \Phi)' (t,u) \right\| \leq 2 \sqrt{\varepsilon}
\end{align*}
and we can construct a sequence $(t_n, v_n) \subset \R \times X^1 (\R^N)$ such that
$$
t_n \to 0, \quad (J \circ \Phi)(t_n, v_n) \to c, \quad (J \circ \Phi)'(t_n, v_n) \to 0.
$$
Set $\widetilde{u}_n := \Phi(t_n, v_n)$. Then, for every $(h, w) \in \R \times X^1 (\R^N)$,
$$
(J \circ \Phi)'(t_n, v_n) (h,w) = J'(\widetilde{u}_n)(\Phi(t_n,w)) + \frac{N-2}{2} P(\widetilde{u}_n)h. 
$$
It is clear that $J(\widetilde{u}_n) = J(\Phi(t_n, v_n)) \to c$. Taking $w = 0$ and $h=1$ we obtain that
$$
o(1) = (J \circ \Phi)'(t_n, v_n) (1, w) = P(\widetilde{u}_n)
$$
so that $P(\widetilde{u}_n) \to 0$. It is also classical to check that $J'(\widetilde{u}_n) \to 0$. Since $H^1(\R^N)$ is a dense subset of $X^1(\R^N)$, for each $n$ we can find $u_n$ such that
\[
\|\widetilde{u}_n-u_n\| \leq \frac 1n
\]
Then, from the continuity of $J : X^1(\R^N) \rightarrow \R$, $J' : X^1 (\R^N) \rightarrow (X^1(\R^N))^{-1}$, $P : X^1(\R^N) \rightarrow \R$, 
$$
J(u_n) \to c, \quad J'(u_n) \to 0, \quad P(u_n) \to 0.
$$
\end{proof}

\section{Profile decomposition of Poho\v{z}aev-Palais-Smale sequences}\label{sect:4}

In this section, we study the profile decomposition of bounded Poho\v{z}aev-Palais-Smale sequences. We begin with several lemmas that will be useful in proving the forthcoming decomposition result.

\begin{Lem}\label{L:scalar_product_translation}
If $u \in X^1 (\R^N)$, $w \in H^1 (\R^N)$ and $(y_n) \subset \R^N$ satisfy $|y_n| \to \infty$, then
\[
\lim_{n\to\infty} \langle u,w(\cdot-y_n)\rangle_\ell = 0.
\]
\end{Lem}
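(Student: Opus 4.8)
The plan is to prove the stronger statement that the translates $v_n := w(\cdot - y_n)$ converge \emph{weakly to $0$} in the Hilbert space $\big(X^1(\R^N),\langle\cdot,\cdot\rangle_\ell\big)$. Since, by the Riesz representation, the map $u \mapsto \langle u, v_n\rangle_\ell$ is precisely the action of $v_n$ as a functional, the weak convergence $v_n \weakto 0$ yields $\langle u, v_n\rangle_\ell \to 0$ for \emph{every} $u \in X^1(\R^N)$, which is exactly the claim. To get $v_n\weakto 0$ I would check, by the standard density criterion for weak convergence of a bounded sequence, just two things: (i) $(v_n)$ is bounded in $X^1(\R^N)$; and (ii) $\langle\phi, v_n\rangle_\ell \to 0$ for $\phi$ ranging over a dense subset of $X^1(\R^N)$, for which I take $\cC_0^\infty(\R^N)$ (dense in $H^1(\R^N)$, hence in $X^1(\R^N)$).

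For (i): since $w\in H^1(\R^N)$ also $v_n\in H^1(\R^N)$, so by \eqref{normX1-oryg} and the Hardy inequality \eqref{ineq:Hardy},
\[
0 \le \|v_n\|_\ell^2 = \int_{\R^N}|\nabla w|^2\,dx - \frac{(N-2)^2}{4}\int_{\R^N}\frac{w(x-y_n)^2}{|x|^2}\,dx + \ell\int_{\R^N} w^2\,dx \le \int_{\R^N}|\nabla w|^2\,dx + \ell\int_{\R^N} w^2\,dx,
\]
which is independent of $n$.

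For (ii): fix $\phi\in\cC_0^\infty(\R^N)$ with $\supp\phi\subset B(0,R)$. As $\phi,v_n\in H^1(\R^N)$, polarizing \eqref{normX1-oryg} (with $L^2$-coefficient $\ell$) reduces the scalar product to
\[
\langle\phi, v_n\rangle_\ell = \int_{B(0,R)}\nabla\phi\cdot\nabla v_n\,dx - \frac{(N-2)^2}{4}\int_{B(0,R)}\frac{\phi\, v_n}{|x|^2}\,dx + \ell\int_{B(0,R)}\phi\, v_n\,dx.
\]
For $|y_n|>2R$ the change of variables $z=x-y_n$ moves $B(0,R)$ and $B(0,2R)$ to regions escaping to infinity, so that $\int_{B(0,R)}(|\nabla v_n|^2+v_n^2)\,dx=\int_{B(-y_n,R)}(|\nabla w|^2+w^2)\,dz\to 0$ because $|\nabla w|^2+w^2\in L^1(\R^N)$; by Cauchy--Schwarz the first and third integrals above tend to $0$. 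The Hardy cross term is the genuinely delicate point and, I expect, the main obstacle: Cauchy--Schwarz bounds it by $\big(\int_{B(0,R)}\phi^2|x|^{-2}\,dx\big)^{1/2}\big(\int_{B(0,R)}v_n^2|x|^{-2}\,dx\big)^{1/2}$, the first factor being finite since $N\ge 3$; for the second I would pick $\chi\in\cC_0^\infty(\R^N)$ with $\chi\equiv 1$ on $B(0,R)$ and $\supp\chi\subset B(0,2R)$ and apply \eqref{ineq:Hardy} to $\chi v_n\in H^1(\R^N)$:
\[
\int_{B(0,R)}\frac{v_n^2}{|x|^2}\,dx \le \int_{\R^N}\frac{(\chi v_n)^2}{|x|^2}\,dx \le \frac{4}{(N-2)^2}\int_{\R^N}|\nabla(\chi v_n)|^2\,dx \lesssim \int_{B(-y_n,2R)}\big(w^2+|\nabla w|^2\big)\,dz \to 0.
\]
Hence $\langle\phi, v_n\rangle_\ell\to 0$; combined with (i) this gives $v_n\weakto 0$ in $X^1(\R^N)$ and the lemma follows. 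The reason the proof has to be organized around weak convergence of the translates (which live in $H^1(\R^N)$), rather than a direct estimate of $\langle u, v_n\rangle_\ell$, is precisely that for a general $u\in X^1(\R^N)$ the quantity $\int u^2|x|^{-2}\,dx$ need not be finite at all (the Hardy optimizer behaves like $|x|^{-(N-2)/2}$ near the origin, for which this integral diverges), so Cauchy--Schwarz on the singular cross term is unavailable against $u$ itself; passing the singular estimate onto the $H^1$-translate and using a localized Hardy inequality circumvents exactly this difficulty.
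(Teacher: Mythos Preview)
Your proof is correct and follows essentially the same strategy as the paper: both establish that $v_n = w(\cdot - y_n) \weakto 0$ in $X^1(\R^N)$ by checking boundedness and testing against $\cC_0^\infty(\R^N)$, then conclude by density. The only cosmetic difference is that the paper first proves $v_n \weakto 0$ in $H^1(\R^N)$ (where there is no Hardy cross term to handle) and then transfers this to $X^1(\R^N)$ via the continuous embedding $H^1(\R^N)\hookrightarrow X^1(\R^N)$ and \cite[Theorem~3.10]{Brezis}, whereas you work directly with the $X^1$ inner product and dispatch the Hardy cross term by a localized Hardy inequality---both routes are equally valid.
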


\begin{proof}
Recall that $w(\cdot -y_n) \weakto 0$ in $H^1 (\R^N)$. Indeed, fix $\varphi \in \cC_0^\infty (\R^N)$ and note that
\begin{align*}
&\quad \left| \int_{\R^N} \nabla w (\cdot - y_n) \nabla \varphi + w(\cdot - y_n) \varphi \, dx \right| = \left| \int_{-y_n + \supp \varphi} \nabla w \nabla \varphi ( (\cdot + y_n) ) + w \varphi (\cdot + y_n) \, dx \right| \\
&\leq \underbrace{\left( \int_{-y_n + \supp \varphi} |\nabla w|^2 + w^2 \, dx \right)^{1/2}}_{\to 0} \left( \int_{\R^N} |\nabla \varphi|^2 + \varphi^2 \, dx \right)^{1/2} \to 0.
\end{align*}
Now, take any $v \in H^1 (\R^N)$ and let $\varphi_k \in \cC_0^\infty (\R^N)$ be the sequence such that $\varphi_k \to v$ in $H^1(\R^N)$. Then
\begin{align*}
&\quad \left| \int_{\R^N} \nabla w (\cdot - y_n) \nabla v + w(\cdot - y_n) v \, dx \right| \\
&\leq \left| \int_{\R^N} \nabla w (\cdot - y_n) \nabla (v-\varphi_k) + w(\cdot - y_n) (v-\varphi_k) \, dx \right|  + \left| \int_{\R^N} \nabla w (\cdot - y_n) \nabla \varphi_k + w(\cdot - y_n) \varphi_k \, dx \right| \\
&\leq \|w\|_{H^1(\R^N)} \|v-\varphi_k\|_{H^1(\R^N)} + o_n(1).
\end{align*}
Now, taking $k\to\infty$ we obtain 
$$
\int_{\R^N} \nabla w (\cdot - y_n) \nabla v + w(\cdot - y_n) v \, dx \to 0.
$$
Thus, indeed $w(\cdot -y_n) \weakto 0$ in $H^1 (\R^N)$. Then, since the embedding $H^1 (\R^N) \subset X^1 (\R^N)$ is continuous, thanks to \cite[Theorem 3.10]{Brezis}, we get $w(\cdot - y_n) \weakto 0$ in $X^1 (\R^N)$. Hence $\langle u, w(\cdot - y_n) \rangle_\ell \to 0$ and the proof is completed.
\end{proof}

We will apply the following lemma several times.

\begin{Lem}[{\cite[Lemma 3.1]{GuoMederski}}] \label{L:MederskiGuo}
If $u \in H^1 (\R^N)$ and $(y_n) \subset \R^N$ satisfy $|y_n|\to\infty$, then
$$
\lim_{n\to\infty} \int_{\R^N} \frac{|u(\cdot - y_n)|^2}{|x|^2} \, dx = 0.
$$
\end{Lem}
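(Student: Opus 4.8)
The plan is to reduce the estimate to compactly supported approximants by density and then invoke the Hardy inequality \eqref{ineq:Hardy}. The basic observation is that, although the weight $|x|^{-2}$ is centered at the origin, the right-hand side of \eqref{ineq:Hardy} is translation invariant: for every $w\in H^1(\R^N)$ one has $w(\cdot-y_n)\in H^1(\R^N)$ with $\|\nabla w(\cdot-y_n)\|_{L^2(\R^N)}=\|\nabla w\|_{L^2(\R^N)}$, so applying \eqref{ineq:Hardy} to $w(\cdot-y_n)$ gives
\[
\int_{\R^N}\frac{w(x-y_n)^2}{|x|^2}\,dx\;\le\;\frac{4}{(N-2)^2}\int_{\R^N}|\nabla w|^2\,dx,\qquad w\in H^1(\R^N),\ n\in\N.
\]

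Next, I would fix $\eps>0$ and use the density of $\cC_0^\infty(\R^N)$ in $H^1(\R^N)$ to pick $\varphi\in\cC_0^\infty(\R^N)$ with $\|u-\varphi\|_{H^1(\R^N)}<\eps$ and $\supp\varphi\subset B(0,R_0)$ for some $R_0>0$. Writing $u=(u-\varphi)+\varphi$ and using $(a+b)^2\le 2a^2+2b^2$, the contribution of the remainder $u-\varphi$ is controlled, uniformly in $n$, by $8\eps^2/(N-2)^2$ via the displayed bound. For the contribution of $\varphi$, note that $\varphi(\cdot-y_n)$ is supported in $y_n+B(0,R_0)$, so as soon as $|y_n|>2R_0$ the weight satisfies $|x|^{-2}\le(|y_n|-R_0)^{-2}$ on this set, whence
\[
\int_{\R^N}\frac{\varphi(x-y_n)^2}{|x|^2}\,dx\;\le\;\frac{\|\varphi\|_{L^2(\R^N)}^2}{(|y_n|-R_0)^2}\;\longrightarrow\;0.
\]
Combining the two contributions yields $\limsup_{n\to\infty}\int_{\R^N}|u(x-y_n)|^2|x|^{-2}\,dx\le 8\eps^2/(N-2)^2$, and since $\eps>0$ is arbitrary the limit is $0$.

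There is no serious obstacle here; the only point worth noting is that one cannot argue by pointwise passage to the limit under the integral sign, since $|x|^{-2}$ is singular at the origin while $u(\cdot-y_n)$ need not decay pointwise near $0$ (it may still carry the $L^2$-tail of $u$). The density reduction to a compactly supported $\varphi$, together with the translation invariance of the right-hand side of \eqref{ineq:Hardy} applied to the $H^1$-small error $u-\varphi$, is precisely what circumvents this difficulty; alternatively one could localize $u$ away from the origin by a smooth cutoff and apply \eqref{ineq:Hardy} directly, but the density argument is shorter. (The statement is quoted from \cite{GuoMederski}; we record the argument only for completeness.)
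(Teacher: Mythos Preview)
Your proof is correct. The paper does not supply its own argument for this lemma; it simply quotes the result from \cite[Lemma~3.1]{GuoMederski} and uses it as a black box. Your density-plus-Hardy argument is the standard route and matches what one finds in the cited reference: approximate $u$ in $H^1(\R^N)$ by a compactly supported $\varphi$, control the remainder uniformly in $n$ via the Hardy inequality \eqref{ineq:Hardy} (whose right-hand side is translation invariant), and observe that the compactly supported piece is eventually supported away from the origin so that the singular weight becomes harmless. There is nothing to add.
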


\begin{Lem} \label{H1-translations-bounded}
If $u \in H^1 (\R^N)$ and $(y_n) \subset \R^N$ satisfy $|y_n| \to \infty$, then
$$
\lim_{n\to\infty} \| u (\cdot - y_n) \| = \| u \|_{H^1 (\R^N)}.
$$
Moreover, as a simple consequence,
$$
\lim_{n\to\infty} \|u (\cdot - y_n)\|_\ell^2 = \| \nabla u\|_{L^2 (\R^N)}^2 + \ell \| u \|_{L^2 (\R^N)}^2.
$$
\end{Lem}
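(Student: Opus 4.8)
The plan is to exploit the fact that, on $H^1(\R^N)$, the $X^1$-norm is given by the explicit formula \eqref{normX1-oryg}, and that the only term in it which is not invariant under translations is the Hardy term. Since $u \in H^1(\R^N)$, each translate $u(\cdot - y_n)$ also lies in $H^1(\R^N) \subset X^1(\R^N)$, so $\|u(\cdot-y_n)\|$ is well defined and
$$
\|u(\cdot-y_n)\|^2 = \int_{\R^N} |\nabla u(\cdot-y_n)|^2 \, dx - \frac{(N-2)^2}{4}\int_{\R^N} \frac{|u(\cdot-y_n)|^2}{|x|^2}\, dx + \int_{\R^N} |u(\cdot-y_n)|^2 \, dx .
$$
By translation invariance of Lebesgue measure the first and third integrals equal $\|\nabla u\|_{L^2(\R^N)}^2$ and $\|u\|_{L^2(\R^N)}^2$ respectively and do not depend on $n$; the middle integral is finite by the Hardy inequality \eqref{ineq:Hardy} and tends to $0$ as $n \to \infty$ by Lemma \ref{L:MederskiGuo}. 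Passing to the limit therefore gives $\|u(\cdot-y_n)\|^2 \to \|\nabla u\|_{L^2(\R^N)}^2 + \|u\|_{L^2(\R^N)}^2 = \|u\|_{H^1(\R^N)}^2$, which is the first claim.

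For the second (``simple consequence'') statement, I would use the relation between the two equivalent norms on $X^1(\R^N)$: since $\|v\|^2 = \xi(v) + \int_{\R^N} v^2\, dx$ and $\|v\|_\ell^2 = \xi(v) + \ell\int_{\R^N} v^2\, dx$, we have $\|v\|_\ell^2 = \|v\|^2 + (\ell-1)\int_{\R^N} v^2\, dx$ for every $v \in X^1(\R^N)$. Applying this with $v = u(\cdot - y_n)$ and using $\int_{\R^N} |u(\cdot-y_n)|^2\,dx = \|u\|_{L^2(\R^N)}^2$ together with the first part yields
$$
\|u(\cdot-y_n)\|_\ell^2 \longrightarrow \|u\|_{H^1(\R^N)}^2 + (\ell-1)\|u\|_{L^2(\R^N)}^2 = \|\nabla u\|_{L^2(\R^N)}^2 + \ell\|u\|_{L^2(\R^N)}^2 ,
$$
as desired.

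There is essentially no serious obstacle here: the whole statement reduces to the observation that translation affects only the Hardy term, whose decay is exactly the content of the already-cited Lemma \ref{L:MederskiGuo}. The only minor point to keep in mind is that one must work with formula \eqref{normX1-oryg} rather than the regularized characterization \eqref{normX1} — this is legitimate precisely because $u$, and hence each translate $u(\cdot-y_n)$, belongs to $H^1(\R^N)$, on which the two expressions agree.
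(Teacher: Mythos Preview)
Your proposal is correct and follows exactly the same approach as the paper: expand $\|u(\cdot-y_n)\|^2$ via the explicit $H^1$-formula \eqref{normX1-oryg}, observe that the gradient and $L^2$ terms are translation invariant, and invoke Lemma \ref{L:MederskiGuo} to kill the Hardy term. The paper leaves the ``simple consequence'' implicit, while you spell it out via $\|v\|_\ell^2 = \|v\|^2 + (\ell-1)\|v\|_{L^2}^2$; both are fine.
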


\begin{proof}
Observe that 
\begin{align*}
\| u (\cdot - y_n) \|^2 &= \int_{\R^N} |\nabla u(\cdot - y_n)|^2 \, dx - \frac{(N-2)^2}{4} \int_{\R^N} \frac{|u(\cdot - y_n)|^2}{|x|^2} \, dx + \int_{\R^N} |u(\cdot - y_n)|^2 \, dx \\
&= \|u\|_{H^1 (\R^N)}^2 - \frac{(N-2)^2}{4} \int_{\R^N} \frac{|u(\cdot - y_n)|^2}{|x|^2} \, dx. 
\end{align*}
Moreover, from Lemma \ref{L:MederskiGuo},
$$
\int_{\R^N} \frac{|u(\cdot - y_n)|^2}{|x|^2} \, dx \to 0
$$
and the proof is completed.
\end{proof}

From Lions’ concentration-compactness principle, we easily obtain the following corollary, which can be viewed as a variant of the concentration–compactness principle in $X^1 (\R^N)$.

\begin{Lem}\label{lionsLem}
Let $(u_n) \subset X^1 (\R^N)$ be a bounded sequence with
$$
\lim_{n \to +\infty} \sup_{z \in \R^N} \int_{B(z, r)} |u_n|^2 \, dx = 0
$$
for some $r > 0$. Then $u_n \to 0$ in $L^t (\R^N)$ for $t \in (2, 2^*)$.
\end{Lem}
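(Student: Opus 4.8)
The plan is to transplant Lions' classical vanishing lemma to the present setting. Although $X^1(\R^N)$ itself is not translation invariant, Section \ref{sect:2} provides continuous embeddings $X^1(\R^N)\subset H^s(\R^N)$ for every $s\in(0,1)$ and $X^1(\R^N)\subset L^t(\R^N)$ for $t\in[2,2^*)$; the space $H^s(\R^N)$ \emph{is} translation invariant, so the assertion is really Lions' lemma in $H^s(\R^N)$, and the usual covering argument goes through once it is run in a fractional Sobolev space rather than in $H^1(\R^N)$. I would also use at the outset the standard remark that if $\sup_z\int_{B(z,r)}|u_n|^2\,dx\to0$, then the same holds with $r$ replaced by any $\rho>0$ (cover a large ball by small ones), so the radius may be taken as large as convenient.

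First I would prove vanishing in one Lebesgue space. Fix $s=\tfrac12$ and $t_0:=2+\tfrac2N$, which lies strictly between $2$ and the fractional critical exponent $2^*_{1/2}=\tfrac{2N}{N-1}$ (hence also below $2^*$). Partition $\R^N$ into a grid of cubes $\{Q_i\}$ of side $r$ with pairwise disjoint interiors; each $Q_i$ sits inside a ball $B(z_i,\rho)$ with $\rho$ a fixed multiple of $r$. On each cube, the fractional Sobolev embedding $H^{1/2}(Q_i)\hookrightarrow L^{2^*_{1/2}}(Q_i)$ (valid since $\tfrac12<\tfrac N2$), combined with Hölder interpolation between $L^2(Q_i)$ and $L^{2^*_{1/2}}(Q_i)$, gives a constant uniform in $i$ (by translation invariance) so that $\|u\|_{L^{t_0}(Q_i)}^{t_0}\lesssim\|u\|_{L^2(Q_i)}^{t_0-2}\,\|u\|_{H^{1/2}(Q_i)}^{2}$; the pair $(s,t_0)$ is chosen precisely so that the $H^{1/2}$-factor enters with exponent $2$ and the $L^2$-factor with exponent $t_0-2=\tfrac2N$. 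Summing over $i$, using that the cubes are disjoint (so $\sum_i\|u\|_{H^{1/2}(Q_i)}^2\le\|u\|_{H^{1/2}(\R^N)}^2$ and $\sum_i\|u\|_{L^2(Q_i)}^2=\|u\|_{L^2(\R^N)}^2$) and $\sum_i a_ib_i\le(\sup_i a_i)\sum_i b_i$, one gets
\[
\|u_n\|_{L^{t_0}(\R^N)}^{t_0}\lesssim\Big(\sup_{z\in\R^N}\int_{B(z,\rho)}|u_n|^2\,dx\Big)^{1/N}\|u_n\|_{H^{1/2}(\R^N)}^{2}.
\]
The first factor tends to $0$ by the (rescaled) hypothesis, while $\|u_n\|_{H^{1/2}(\R^N)}$ stays bounded because $(u_n)$ is bounded in $X^1(\R^N)\hookrightarrow H^{1/2}(\R^N)$; hence $u_n\to0$ in $L^{t_0}(\R^N)$.

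The remaining exponents follow by elementary $L^p$-interpolation: for $t\in(2,t_0]$ interpolate $L^t$ between $L^2$ and $L^{t_0}$, using that $(u_n)$ is bounded in $L^2(\R^N)$; for $t\in(t_0,2^*)$ pick $t_1\in(t,2^*)$ and interpolate $L^t$ between $L^{t_0}$ and $L^{t_1}$, using that $(u_n)$ is bounded in $L^{t_1}(\R^N)$ by \eqref{eq:embeddings}. In each case the $L^{t_0}$-factor carries the decay and the other factor stays bounded, so $u_n\to0$ in $L^t(\R^N)$ for all $t\in(2,2^*)$.

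I do not anticipate a serious obstacle: this is Lions' lemma, and its $H^s$-version could even be quoted directly. The two points requiring a little care are (i) that the local Sobolev/interpolation constant on the cubes is uniform (true since all cubes are translates of one another), and (ii) that the covering step by itself yields vanishing only for exponents $t\ge 2+\tfrac{4s}{N}$, so the exponents $t$ near $2$ must be recovered in the interpolation step through the embedding $X^1(\R^N)\hookrightarrow L^2(\R^N)$; alternatively, one could repeat the covering step with $s=s(t)\in(0,1)$ chosen small enough (so that $t<2^*_s$ and the $H^s$-exponent exceeds $2$), which treats each $t$ directly at the cost of a slightly bulkier inequality. Using cubes instead of balls is a cosmetic simplification that removes the bounded-overlap bookkeeping in the seminorm sum.
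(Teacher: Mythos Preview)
Your proof is correct and follows the same idea as the paper: reduce to the classical Lions vanishing lemma in a fractional Sobolev space via the continuous embedding $X^1(\R^N)\hookrightarrow H^s(\R^N)$. The paper's proof is more economical: rather than fixing $s=\tfrac12$, reproving the covering argument by hand for $t_0=2+\tfrac2N$, and then interpolating, it simply picks, for each $t\in(2,2^*)$, some $s=1-\varepsilon$ close enough to $1$ that $t<2^*_s$, and then quotes the known $H^s$-version of Lions' lemma directly --- exactly the alternative you mention in your final paragraph (where, incidentally, you want $s$ \emph{close to $1$}, not ``small enough'', to ensure $t<2^*_s$).
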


\begin{proof}
Fix $t \in (2, 2^*)$. Then there is $\varepsilon \in (0,1)$ such that $t < \frac{2N}{N-2(1-\varepsilon)}$. Since $X^1 (\R^N) \subset H^{1-\varepsilon} (\R^N)$, we know that $u_n \to 0$ in $L^s (\R^N)$ for $2 < s < \frac{2N}{N-2(1-\varepsilon)}$. Therefore, $u_n \to 0$ in $L^t (\R^N)$.
\end{proof}

In the profile decomposition presented below, we construct weak solutions to the \textit{limiting problem}
$$
-\Delta u = g(u).
$$
Since we work in $X^1 (\R^N)$, while the natural energy space for this equation is $H^1 (\R^N)$, we first need a few regularity results.

We recall the following fact, which is a consequence of Weyl's lemma.

\begin{Th}[{\cite[Lemma 3]{DiFratta}}]\label{T:dystrybucje_na_H1}
Let $\Omega \subset \R^N$ be an open set and $h \in H^{-1} (\Omega)$. Suppose that $u \in L^2 (\Omega)$ satisfies
$$
-\Delta u = h \quad \mbox{in } \cD'(\Omega).
$$
Then $u \in H^{1}_{\mathrm{loc}} (\Omega)$.
\end{Th}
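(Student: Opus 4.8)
The plan is to localise, then subtract off a sufficiently regular particular solution so that the remainder becomes distributionally harmonic, at which point Weyl's lemma closes the argument. First I would reduce the claim to a local statement: it suffices to show that for every open ball $B$ with $\overline B \subset \Omega$ one has $u|_{B'} \in H^1(B')$ for every ball $B' \Subset B$, since an arbitrary compact subset of $\Omega$ is covered by finitely many such $B'$. Fix such a $B$. The restriction $h|_B$, defined by duality through the extension-by-zero embedding $H^1_0(B) \hookrightarrow H^1_0(\Omega)$, lies in $H^{-1}(B)$. Because $B$ is bounded, the Poincaré inequality makes $(\psi,\zeta) \mapsto \int_B \nabla\psi\cdot\nabla\zeta\,dx$ a continuous, coercive bilinear form on $H^1_0(B)$, so by Lax--Milgram there is a (unique) $w \in H^1_0(B)$ with $\int_B \nabla w\cdot\nabla\zeta\,dx = \langle h,\zeta\rangle$ for all $\zeta \in H^1_0(B)$; that is, $-\Delta w = h$ in $\cD'(B)$.

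Next I would set $\psi := u - w$. Since $u \in L^2(B) \subset \cD'(B)$ and $w \in H^1_0(B) \subset \cD'(B)$, $\psi$ is a bona fide distribution on $B$, and restricting the identity $-\Delta u = h$ from $\cD'(\Omega)$ to $\cD'(B)$ and subtracting $-\Delta w = h$ gives $-\Delta\psi = 0$ in $\cD'(B)$. By Weyl's lemma, a distribution that is harmonic in the distributional sense is represented a.e. by a (real-analytic) $C^\infty$ function; hence $\psi \in C^\infty(B)$, so in particular $\psi \in H^1(B')$ for every $B' \Subset B$. Writing $u = \psi + w$ and noting $w \in H^1_0(B) \subset H^1(B')$, we obtain $u \in H^1(B')$, which is exactly what the reduction required, and therefore $u \in H^1_{\loc}(\Omega)$.

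I do not expect a serious obstacle: this is a standard elliptic-regularity fact, and the only points needing a word of care are the meaning of the restriction $h|_B \in H^{-1}(B)$ (handled by the duality above) and the legitimacy of invoking Weyl's lemma, which is fine because $\psi$ is automatically a distribution via $L^2_{\loc} \subset \cD'$. If one prefers to avoid Weyl's lemma, the same result follows by Fourier analysis after the localisation $v := \varphi u$ with $\varphi \in \cC_0^\infty(\Omega)$: one computes $-\Delta v = \varphi h - 2\,\div(u\nabla\varphi) + u\,\Delta\varphi =: \widetilde h$, which lies in $H^{-1}(\R^N)$ because each term does, while $v \in L^2(\R^N)$ has compact support; then, using $|\xi|^2\widehat v(\xi) = \widehat{\widetilde h}(\xi)$ and splitting $\int_{\R^N}(1+|\xi|^2)|\widehat v(\xi)|^2\,d\xi$ over $\{|\xi|\le 1\}$ (bounded by $2\|v\|_{L^2}^2$) and $\{|\xi|>1\}$ (where $|\xi|^{-2} \le 2(1+|\xi|^2)^{-1}$, so it is bounded by a multiple of $\|\widetilde h\|_{H^{-1}}^2$) shows $v \in H^1(\R^N)$; choosing $\varphi \equiv 1$ on a prescribed compact subset of $\Omega$ again yields $u \in H^1_{\loc}(\Omega)$.
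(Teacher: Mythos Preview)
Your proof is correct; the paper does not supply its own argument for this statement but simply quotes it from \cite{DiFratta}, noting only that it ``is a consequence of Weyl's lemma.'' Your localise--subtract--Weyl scheme is exactly the standard route behind that remark, and the alternative Fourier argument you sketch is also valid.
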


Applying this theorem for $\Omega=\R^N$, we can get the following result in the setting that we are interested in.

\begin{Th}\label{th:usuwanie_osobliwosci}
Suppose that $g$ satisfies (G1)--(G3). Suppose that $u \in H^s (\R^N)$, where $s \in (0,1)$ is chosen so that $p < \frac{2N}{N-2s}$, satisfies
$$
-\Delta u = g(u) \quad \mbox{in } \cD'(\R^N ).
$$
Then $u \in H^1 (\R^N)$ and $u$ is a weak solution to $-\Delta u = g(u)$ on $\R^N$.
\end{Th}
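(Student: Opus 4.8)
The plan is to bootstrap regularity starting from the hypothesis $u \in H^s(\R^N)$ and the distributional equation, using (G1)--(G3) to control the growth of $g(u)$, and then to conclude global $H^1$-membership via the Caccioppoli-type estimate mentioned after Theorem~\ref{T:dystrybucje_na_H1}. First I would record the pointwise bound coming from (G1)--(G3): there is $C>0$ with $|g(s)| \le C(|s| + |s|^{p-1})$ for all $s \in \R$. Since $u \in H^s(\R^N) \subset L^2(\R^N) \cap L^{2N/(N-2s)}(\R^N)$ by the fractional Sobolev embedding, and $p < \frac{2N}{N-2s}$ by the choice of $s$, interpolation gives $|u|^{p-1} \in L^{q}(\R^N)$ for $q = \frac{2N}{(N-2s)(p-1)} > 1$; combined with the $L^2$-bound on the linear part, this yields $g(u) \in L^{q_0}_{\loc}(\R^N)$ for some $q_0 > 1$, and in fact $g(u) \in H^{-1}_{\loc}(\R^N)$ since $L^{q_0}_{\loc} \hookrightarrow H^{-1}_{\loc}$ in dimension $N$ when $q_0 \ge \frac{2N}{N+2}$ (and if $q_0$ is smaller one first uses $L^{q_0} \hookrightarrow W^{-2,q_0}_{\loc}$, but I expect the exponents to be comfortable here because $p < 2^*$).

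The second step is to apply Theorem~\ref{T:dystrybucje_na_H1} with $\Omega = \R^N$: since $u \in L^2(\R^N)$ and $-\Delta u = g(u) \in H^{-1}_{\loc}(\R^N)$ in $\cD'(\R^N)$, we conclude $u \in H^1_{\loc}(\R^N)$. At this point one can run a standard elliptic bootstrap on bounded domains — now $u \in H^1_{\loc}$ implies $u \in L^{2^*}_{\loc}$, hence $g(u) \in L^{(2^*)/(p-1)}_{\loc}$ with $(2^*)/(p-1) > 1$, and elliptic $L^q$-regularity (or a Moser iteration using $p < 2^*$) upgrades $u$ to $H^2_{\loc}$, in particular $u \in L^\infty_{\loc}(\R^N)$ — but for the statement as written we only need $u \in H^1_{\loc}(\R^N)$, so I would keep this step minimal and merely note $g(u) \in L^2_{\loc}(\R^N)$, which is all the Caccioppoli argument requires.

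The third and main step is to promote $u \in H^1_{\loc}(\R^N)$ to $u \in H^1(\R^N)$, i.e.\ to obtain a \emph{global} gradient bound. Here I would use a cutoff $\eta_R \in \cC_0^\infty(\R^N)$ with $\eta_R = 1$ on $B(0,R)$, $\eta_R = 0$ outside $B(0,2R)$, $|\nabla \eta_R| \lesssim 1/R$, test the equation against $\eta_R^2 u$ (legitimate since $u \in H^1_{\loc}$ and $g(u)u \in L^1_{\loc}$), and get
\[
\int_{\R^N} \eta_R^2 |\nabla u|^2 \, dx = -2\int_{\R^N} \eta_R u \, \nabla \eta_R \cdot \nabla u \, dx + \int_{\R^N} \eta_R^2 g(u) u \, dx.
\]
The first term on the right is absorbed by Young's inequality into $\tfrac12 \int \eta_R^2|\nabla u|^2$ plus $C\int |\nabla\eta_R|^2 u^2 \le C R^{-2}\|u\|_{L^2}^2 \to 0$; the second term is bounded using (G3) — more precisely the consequence of (G2)--(G3) that $g(s)s \le C|s|^2 + C|s|^p$, so $\int \eta_R^2 g(u)u \le C\|u\|_{L^2}^2 + C\|u\|_{L^p}^p$, and both are finite since $u \in H^s(\R^N) \subset L^2 \cap L^p$. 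Letting $R \to \infty$ and invoking monotone convergence gives $\nabla u \in L^2(\R^N)$, hence $u \in H^1(\R^N)$. Finally, once $u \in H^1(\R^N)$ the distributional identity $-\Delta u = g(u)$ extends by density of $\cC_0^\infty(\R^N)$ in $H^1(\R^N)$ to all test functions $\varphi \in H^1(\R^N)$ — using $g(u) \in L^{p'}(\R^N) + L^2(\R^N)$ from the growth bound and $H^1 \hookrightarrow L^p \cap L^2$ — so $u$ is a genuine weak solution on all of $\R^N$. The one point requiring a little care is the sign/absorption in the $g(u)u$ term: the second half of (G3), $\lim_{|s|\to\infty} g(s)s \ge 0$, is exactly what prevents that term from being badly negative in a way that would obstruct the bound (in fact it only helps, as it makes $\int \eta_R^2 g(u)u$ bounded above, which is the direction we need), so the estimate closes; the genuine obstacle, if any, is verifying the embedding $L^{q_0}_{\loc} \hookrightarrow H^{-1}_{\loc}$ with the exponent produced by the fractional Sobolev inequality, but since $p < 2^*$ this has room to spare.
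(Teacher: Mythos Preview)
Your proof is correct and follows essentially the same route as the paper: growth bound on $g$ to place $g(u)\in H^{-1}(\R^N)$, Theorem~\ref{T:dystrybucje_na_H1} for $H^1_{\loc}$, then the Caccioppoli cutoff test with $\eta_R^2 u$ and $R\to\infty$ for the global $H^1$ bound. The only cosmetic differences are that the paper writes the $H^{-1}$ embedding directly as $L^2+L^{p/(p-1)}\subset H^{-1}$ (which holds precisely because $p<2^*$) and bounds $\int |g(u)u|$ rather than $\int g(u)u$, so your discussion of the sign condition in (G3) is unnecessary.
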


\begin{proof}
\textbf{Step 1.} \textit{The distributional solution $u$ belongs to $H^1_{\loc} (\R^N)$}. 

Let $u$ be as in the statement. Since
\[
|g(u)| \lesssim |u|+|u|^{p-1} \in L^2(\R^N) + L^\frac{p}{p-1} (\R^N) \subset H^{-1} (\R^N),
\]
by Theorem \ref{T:dystrybucje_na_H1} we know that $u \in H^1_{\loc} (\R^N)$.

\textbf{Step 2.} \textit{The solution $u$ lies globally in $H^1 (\R^N)$.}

Now, we will show that $u \in H^1 (\R^N)$. First, we will show the following variant of the Caccioppoli inequality (cf. \cite[Chapter 4]{Giaquinta}, \cite{Iwaniec})
$$
\int_{B(0,R)} |\nabla u|^2 \, dx \lesssim \int_{B(0,2R)} |g(u) u| \, dx + \frac{1}{R^2} \int_{B(0,2R) \setminus B(0,R)} u^2 \, dx.
$$
Take $R > 0$ and $\eta_R \in \cC_0^\infty (\R^N)$ such that
$$
\eta_R \equiv 1 \mbox{ on } B(0,R), \quad \eta_R \equiv 0 \mbox{ outside of } B(0,2R)
$$
and
$$
0 \leq \eta_R \leq 1, \quad |\nabla \eta_R | \leq \frac{c}{R}
$$
for some $c \geq 1$. Then $\eta_R^2 u \in H^1_0 (B(0,2R))$. Take a sequence $\varphi_n \in \cC_0^\infty (B(0,2R))$ such that $\varphi_n \to \eta_R^2 u$ in $H^1_0(B(0,2R))$. Testing the equation with $\varphi_n$ we get
$$
\int_{\R^N} \nabla u \nabla \varphi_n \, dx = \int_{\R^N} g(u) \varphi_n \, dx.
$$
Since $\varphi_n \to \eta_R^2 u$ in the strong topology of $H^1_0(B(0,2R))$, passing to the limit, we easily get that
$$
\int_{\R^N} \nabla u \nabla (\eta_R^2 u) \, dx = \int_{\R^N} g(u) \eta_R^2 u \, dx.
$$
It can be rewritten as
$$
\int_{\R^N} \eta_R^2 |\nabla u|^2 \, dx + 2 \int_{\R^N} \eta_R u \nabla u \cdot \nabla \eta_R \, dx = \int_{\R^N} g(u) \eta_R^2 u \, dx.
$$
Now, using Young's inequality we obtain
\begin{align*}
\left| 2 \int_{\R^N} \eta_R u \nabla u \cdot \nabla \eta_R \, dx \right| &\leq  \int_{\R^N} (\eta_R |\nabla u|) (2| \nabla \eta_R| |u|) \, dx  \leq \frac{1}{2} \int_{\R^N} \eta_R^2 |\nabla u|^2 \, dx + 2 \int_{\R^N} |\nabla \eta_R|^2 |u|^2 \, dx \\
&\leq \frac12 \int_{\R^N} \eta_R^2 |\nabla u|^2 \, dx + \frac{2c^2}{R^2} \int_{B(0,2R) \setminus B(0,R)} u^2 \, dx.
\end{align*}
Hence
\begin{align*}
\int_{\R^N} \eta_R^2 |\nabla u|^2 \, dx \leq \int_{\R^N} |g(u) u| \eta_R^2 \, dx + \frac12 \int_{\R^N} \eta_R^2 |\nabla u|^2 \, dx + \frac{2c^2}{R^2} \int_{B(0,2R) \setminus B(0,R)} u^2 \, dx
\end{align*}
or equivalently
\begin{align*}
\frac12 \int_{\R^N} \eta_R^2 |\nabla u|^2 \, dx \leq \int_{B(0,2R)} |g(u) u| \eta_R^2 \, dx + \frac{2c^2}{R^2} \int_{B(0,2R) \setminus B(0,R)} u^2 \, dx.
\end{align*}
Thus
\begin{align*}
\int_{B(0,R)} |\nabla u|^2 \, dx &\leq \int_{\R^N} \eta_R^2 |\nabla u|^2 \, dx \lesssim \int_{B(0,2R)} |g(u) u| \eta_R^2 \, dx + \frac{1}{R^2} \int_{B(0,2R) \setminus B(0,R)} u^2 \, dx \\
&\leq \int_{B(0,2R)} |g(u) u| \, dx + \frac{1}{R^2} \int_{B(0,2R) \setminus B(0,R)} u^2 \, dx.
\end{align*}
Hence, we obtained the following variant of the Caccioppoli inequality 
\begin{equation}\label{caccioppoli}
    \int_{B(0,R)} |\nabla u|^2 \, dx \lesssim \int_{B(0,2R)} |g(u) u| \, dx + \frac{1}{R^2} \int_{B(0,2R) \setminus B(0,R)} u^2 \, dx.
\end{equation}
Recall that
$$
g(u) \lesssim |u| + |u|^{p-1}.
$$
Thus, from the Caccioppoli inequality \eqref{caccioppoli} we obtain
\begin{align*}
\int_{B(0,R)} |\nabla u|^2 \, dx &\lesssim \int_{B(0,2R)} |g(u) u| \, dx + \frac{1}{R^2} \int_{B(0,2R) \setminus B(0,R)} u^2 \, dx \\
&\lesssim \|u\|_{L^2 (\R^N)}^2 + \|u\|_{L^p (\R^N)}^p  + \frac{1}{R^2} \|u\|_{L^2 (\R^N)}^2
\end{align*}
Now, taking $R \to \infty$, from the monotone convergence theorem
$$
\int_{\R^N} |\nabla u|^2 \, dx \lesssim \|u\|_{L^2 (\R^N)}^2 + \|u\|_{L^p (\R^N)}^p < \infty
$$
and therefore $u \in H^1 (\R^N)$.
\end{proof}

We introduce the energy and Poho\v{z}aev functionals for the limiting equation. We define ${J_\infty : H^1 (\R^N) \rightarrow \R}$ and $P_\infty : H^1 (\R^N) \rightarrow \R$ as
\begin{align*}
J_\infty(u) &:= \frac12 \int_{\R^N} |\nabla u|^2 \, dx + \frac{\ell}{2} \int_{\R^N} u^2 \, dx - \int_{\R^N} F(u) \, dx, \\
P_{\infty}(u) &:= \frac12 \int_{\R^N} |\nabla u|^2 \, dx + \frac{2^* \ell}{2} \int_{\R^N} u^2 \, dx - 2^* \int_{\R^N}  F(u) \, dx.
\end{align*}

\begin{Rem}\label{R:problem_graniczny_wlasnosci}
It is well known that every critical point $w \in H^1 (\R^N)$ of $J_\infty$ satisfies $P_\infty(w) = 0$.
\end{Rem}

\begin{Th}\label{Th:splitting}
Suppose that $(u_n) \subset H^1 (\R^N)$ is a bounded in $X^1(\R^N)$, Poho\v{z}aev-Palais-Smale sequence for $J$ at level $c > 0$. Then, passing to a subsequence, there are an integer $k \geq 0$, $u_0 \in X^1 (\R^N)$, and sequences $(y_n^j) \subset \Z^N$, $w^j \in H^1 (\R^N)$ for $j\in \{1,\ldots, k\}$ such that

\begin{itemize}
\item[(a)] $u_n \weakto u_0$, $J'(u_0)=0$ and $P(u_0)=0$;
\item[(b)] $|y_n^j| \to \infty$ and $|y_n^j - y_n^{j'}|\to \infty$ for $j \neq j'$;
\item[(c)] $w^j \neq 0$, $w^j$ is a weak solution to
$$
-\Delta w_j = g(w_j) \quad \mbox{in } \R^N,
$$
and is a critical point of $J_{\infty}$ for each $1 \leq j \leq k$;
\item[(d)] $u_n - u_0 - \sum_{j=1}^k w^j (\cdot - y_n^j) \to 0$ in $X^1 (\R^N)$;
\item[(e)] $\|u_n\|^2_\ell\to \|u_0\|^2_\ell+\sum_{j=1}^k  \left( \|\nabla w^j\|_{L^2 (\R^N)}^2 + \ell \| w^j \|_{L^2 (\R^N)}^2 \right)$;
\item[(f)] $J(u_n) \to J(u_0) + \sum_{j=1}^k J_\infty (w^j)$.
\end{itemize}
\end{Th}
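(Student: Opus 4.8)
The plan is to run an iterative profile decomposition of Gérard--Nawa type, re-engineered around the fact that $X^1(\R^N)$ is \emph{not} translation invariant. The weak limit $u_0$ of a subsequence will absorb the part of $(u_n)$ that ``stays near the origin''; any leftover non-vanishing $L^2$-mass must escape to infinity, and after translating it back by $y_n^j$ with $|y_n^j|\to\infty$ one lands in the \emph{translation-invariant} spaces $H^s(\R^N)$, $s\in(0,1)$, into which $X^1(\R^N)$ embeds. I would fix once and for all $s\in(0,1)$ with $p<\tfrac{2N}{N-2s}$. Because the Hardy weight $|x|^{-2}$ is negligible on sets running off to infinity (Lemma~\ref{L:MederskiGuo} and elementary estimates), each escaped profile $w^j$ will solve the \emph{potential-free} limiting equation $-\Delta w=g(w)$ in $\cD'(\R^N)$; a priori only $w^j\in H^s(\R^N)$, but Theorem~\ref{th:usuwanie_osobliwosci} upgrades this to an $H^1(\R^N)$ weak solution, i.e.\ a critical point of $J_\infty$, whence $P_\infty(w^j)=0$ by Remark~\ref{R:problem_graniczny_wlasnosci}.

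I would start by passing to a subsequence with $u_n\weakto u_0$ in $X^1(\R^N)$: the local compactness of $X^1(\R^N)\hookrightarrow L^t_{\loc}(\R^N)$ together with (G1)--(G3) (so that $g(u_n)\to g(u_0)$ in $L^1_{\loc}$ by a Vitali argument) lets one pass to the limit in $J'(u_n)=o(1)$ tested against $\cC_0^\infty$ functions and, by density, deduce $J'(u_0)=0$. Then set $v_n^0:=u_n-u_0\weakto0$ and iterate: given $v_n^j\weakto0$ bounded in $X^1(\R^N)$, apply the dichotomy of Lemma~\ref{lionsLem}. If $\sup_z\int_{B(z,1)}|v_n^j|^2\to0$ then $v_n^j\to0$ in $L^t(\R^N)$ for $t\in(2,2^*)$; inserting this into $\|v_n^j\|_\ell^2=J'(v_n^j)(v_n^j)+\int f(v_n^j)v_n^j$, killing the last term by (F2)--(F3) and the first by the asymptotic orthogonality $J'(v_n^j)(v_n^j)=J'(u_n)(v_n^j)+o(1)=o(1)$, gives $v_n^j\to0$ in $X^1(\R^N)$ and the decomposition closes with $k:=j$. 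Otherwise, taking $z_n$ that realizes the concentration and rounding to the nearest lattice point produces $y_n^{j+1}\in\Z^N$, with $|y_n^{j+1}|\to\infty$ and $|y_n^{j+1}-y_n^i|\to\infty$ for $i\le j$ (else $v_n^j$, resp.\ a translate $v_n^j(\cdot+y_n^i)\weakto0$ in $H^s$, would fail to vanish in $L^2_{\loc}$). Since $H^s(\R^N)$ is translation invariant, $v_n^j(\cdot+y_n^{j+1})$ is bounded in $H^s(\R^N)$, hence (subsequence) $\weakto w^{j+1}$, and $w^{j+1}\neq0$ by the concentration bound. Translating $J'(u_n)=o(1)$ against translates $\varphi(\cdot-y_n^{j+1})$ of $\cC_0^\infty$ functions (whose $X^1$-norms stay bounded, Lemma~\ref{H1-translations-bounded}) and letting $n\to\infty$ --- the $|x|^{-2}$ term dies because the test support eventually lies far from $0$, and $u_n(\cdot+y_n^{j+1})\to w^{j+1}$ in $L^p_{\loc}$ and a.e. --- yields $-\Delta w^{j+1}=g(w^{j+1})$ in $\cD'(\R^N)$; Theorem~\ref{th:usuwanie_osobliwosci} then gives $w^{j+1}\in H^1(\R^N)$, a critical point of $J_\infty$. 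Put $v_n^{j+1}:=v_n^j-w^{j+1}(\cdot-y_n^{j+1})$, which again sits weakly at $0$ in $X^1(\R^N)$ (Lemma~\ref{L:scalar_product_translation}) and is bounded there (Lemma~\ref{H1-translations-bounded}). The loop terminates: every nontrivial $H^1$-solution $w$ of $-\Delta w=g(w)$ satisfies $\|\nabla w\|_{L^2}^2+\ell\|w\|_{L^2}^2\ge\beta>0$ (from $\|\nabla w\|_{L^2}^2+\ell\|w\|_{L^2}^2=\int f(w)w\le\tfrac12(\|\nabla w\|_{L^2}^2+\ell\|w\|_{L^2}^2)+C\|w\|_{L^p}^p$ and Sobolev), while the orthogonality identity below keeps $\sum_{i=1}^j(\|\nabla w^i\|_{L^2}^2+\ell\|w^i\|_{L^2}^2)$ below $\limsup_n\|u_n\|_\ell^2-\|u_0\|_\ell^2$.

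For the bookkeeping I would establish the orthogonality identity
\[
\|v_n^j\|_\ell^2=\|u_n\|_\ell^2-\|u_0\|_\ell^2-\sum_{i=1}^j\bigl(\|\nabla w^i\|_{L^2}^2+\ell\|w^i\|_{L^2}^2\bigr)+o_n(1)
\]
by expanding $\bigl\|u_0+\sum_i w^i(\cdot-y_n^i)+v_n^j\bigr\|_\ell^2$: the $u_0$--$w^i(\cdot-y_n^i)$ cross terms vanish by Lemma~\ref{L:scalar_product_translation}; the $w^i(\cdot-y_n^i)$--$w^{i'}(\cdot-y_n^{i'})$ cross terms ($i\neq i'$) vanish because $|y_n^i-y_n^{i'}|\to\infty$ (approximate by $\cC_0^\infty$, supports separate, Hardy terms negligible); the $w^i(\cdot-y_n^i)$--$v_n^j$ cross terms vanish because $v_n^j(\cdot+y_n^i)\weakto0$ in $H^s$ and --- the genuinely delicate point --- $u_n(\cdot+y_n^i)$ is bounded in $H^1_{\loc}(\R^N)$ by a localized variant of the Caccioppoli inequality \eqref{caccioppoli} on balls lying uniformly far from the origin, so $\nabla v_n^j(\cdot+y_n^i)\weakto0$ in $L^2_{\loc}$ and the gradient pairings also vanish; and $\|w^i(\cdot-y_n^i)\|_\ell^2\to\|\nabla w^i\|_{L^2}^2+\ell\|w^i\|_{L^2}^2$ by Lemma~\ref{H1-translations-bounded}. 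Reading this at $j=k$ (where $v_n^k\to0$ strongly) gives (d) and (e). An iterated Brezis--Lieb argument for $u\mapsto\int_{\R^N}F(u)$ and $u\mapsto\int_{\R^N}u^2$ --- using a.e.\ convergence of the separating profiles plus the equi-integrability from (F2)--(F3) --- gives $\int F(u_n)\to\int F(u_0)+\sum_i\int F(w^i)$ and $\|u_n\|_{L^2}^2\to\|u_0\|_{L^2}^2+\sum_i\|w^i\|_{L^2}^2$; the former combined with (e) gives (f). Finally $P(u_n)=\xi(u_n)-2^*\int G(u_n)\to P(u_0)+\sum_i\bigl(\|\nabla w^i\|_{L^2}^2-2^*\int G(w^i)\bigr)=P(u_0)+\sum_i P_\infty(w^i)=P(u_0)$ (using the Poho\v{z}aev identity for the limiting equation), and since $P(u_n)\to0$ we conclude $P(u_0)=0$, completing (a); (b)--(c) have been recorded along the way.

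The main obstacle is exactly the lack of translation invariance of $X^1(\R^N)$, and it bites in two places. First, one must make sense of the translated remainders at all: this is done by descending to the translation-invariant $H^s(\R^N)$ and then \emph{recovering} $H^1$-regularity of the profiles via the Weyl-lemma / Caccioppoli machinery of Theorem~\ref{th:usuwanie_osobliwosci}, which works precisely because the profiles solve the \emph{potential-free} equation. Second, the norm orthogonality is not formal: $\langle v_n^j,w^i(\cdot-y_n^i)\rangle_\ell\to0$ needs the auxiliary Lemmas~\ref{L:scalar_product_translation}--\ref{H1-translations-bounded}, the decay of the Hardy weight far from the origin, and --- the crux --- a uniform $H^1_{\loc}$ bound on $u_n$ on balls escaping to infinity, obtained from a localized Caccioppoli inequality (no singularity is present there). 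Everything else is careful but standard bookkeeping.
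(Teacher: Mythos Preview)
Your outline matches the paper's proof in its global architecture: extract $u_0$ as the weak $X^1$-limit, run the Lions dichotomy in the translation-invariant space $H^s(\R^N)$, upgrade each escaping profile from $H^s$ to $H^1$ via Theorem~\ref{th:usuwanie_osobliwosci}, and terminate via the uniform lower bound on $\|\nabla w^j\|_{L^2}^2+\ell\|w^j\|_{L^2}^2$. The Brezis--Lieb bookkeeping for $\int F$, the energy splitting, and the deduction $P(u_0)=0$ from $P_\infty(w^j)=0$ are likewise the same.

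The divergence --- and a genuine gap --- is in the norm-orthogonality step. You propose to kill the cross-term $\langle v_n^j,w^i(\cdot-y_n^i)\rangle_\ell$ by a localized Caccioppoli bound on $u_n(\cdot+y_n^i)$ and the resulting weak convergence $\nabla v_n^j(\cdot+y_n^i)\weakto0$ in $L^2_{\loc}$. But $\nabla w^i$ is not compactly supported, and $\nabla u_n$ is \emph{not} uniformly bounded in $L^2(\R^N)$: only $\|u_n\|_\ell$ is controlled, and $\|\nabla u_n\|_{L^2}$ can blow up (it will whenever $u_0\notin H^1$). An exterior Caccioppoli does give $\|\nabla u_n\|_{L^2(\R^N\setminus B(0,1))}\le C$, which handles the pairing everywhere except on $B(0,1)$; there you are left with $\|\nabla u_n\|_{L^2(B(0,1))}\cdot\|\nabla w^i\|_{L^2(B(-y_n^i,1))}$, an $\infty\cdot 0$ product with no a priori control. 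The same obstruction hits the Hardy cross-term $\int|x|^{-2}u_n\,w^i(\cdot-y_n^i)$, since $\int|x|^{-2}u_n^2$ need not stay bounded.

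The paper sidesteps both issues by exploiting that $w^i$ \emph{solves} the limiting equation. For the gradient plus mass part it tests $-\Delta w^i+\ell w^i=f(w^i)$ against $u_n(\cdot+y_n^i)\in H^1(\R^N)$ (this is where choosing $u_n\in H^1$ is genuinely used), converting the problematic pairing into $\int_{\R^N}f(w^i)\,u_n(\cdot+y_n^i)$, which converges by Vitali using only the $L^2\cap L^p$-boundedness of $u_n$; see \eqref{E:temp-new2}. For the Hardy cross-term it invokes the exponential decay of $w^i$ (as an $H^1$-solution of $-\Delta w=g(w)$), so that on $B(0,R)$ one bounds by $\|u_n\|_{L^{2N/(N-2s)}}$ (controlled via $X^1\hookrightarrow H^s$) times an exponentially small factor, while on the complement one uses $R^{-2}\|u_n\|_{L^2}\|w^i\|_{L^2}$; see \eqref{E:temp-new}. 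Neither device needs any global $H^1$-control on $u_n$. Your Caccioppoli route could probably be rescued by grafting on the exponential decay of $w^i$ to absorb the $B(0,1)$ contribution, but as written it does not close; the paper's equation-testing trick is both shorter and avoids the decay input for the gradient term altogether.
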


Before we present the proof, we observe that - compared with standard profile decompositions - the limiting points $w^j$ belong to the $H^1(\R^N)$ space, not merely to $X^1 (\R^N)$. This observation is crucial for the proof, since the energy functional $J_\infty$, associated with $-\Delta w = g(w)$, is defined only on $H^1 (\R^N)$ and is not well-defined for functions in $X^1 (\R^N) \setminus H^1 (\R^N)$. Without this additional regularity, we would not know that each $w^j$ is a critical point of the energy functional, and consequently, we could not ensure that the iterative procedure in the proof terminates after finitely many steps. Here, a crucial role is played by the regularity theorem, which relies on Weyl’s lemma, the Caccioppoli inequality, and continuous embeddings into $H^s (\R^N)$.

\begin{proof}

Since $(u_n) \subset X^1 (\R^N)$ is bounded, it is (up to a subsequence) weakly convergent in $X^1 (\R^N)$ to some $u_0 \in X^1 (\R^N)$. Since $J$ is weak-to-weak* continuous, for any $\varphi \in \cC_0^\infty (\R^N)$, $J'(u_0)(\varphi) = J'(u_n)(\varphi) + o(1) = o(1)$, so $J'(u_0) = 0$.

Let $v_n^1 := u_n - u_0 \weakto 0$ in $X^1 (\R^N)$. Then $v_n^1 \to 0$ in $L^t_{loc} (\R^N)$ for $t \in [2,2^*)$ and $v_n^1 (x) \to 0$ for a.e. $x \in \R^N$. It is clear that
\begin{equation}\label{E:v_n^1-norm}
    \|v_n^1\|^2_\ell = \| u_n - u_0\|^2_\ell = \|u_n\|^2_\ell + \|u_0\|^2_\ell - 2 \langle u_n, u_0 \rangle_\ell = \|u_n\|^2_\ell - \|u_0\|^2_\ell + o (1).
\end{equation}

Now, we compute
\begin{align*}
\int_{\R^N} F(u_n) - F(v_n^1) \, dx  &= \int_{\R^N} \int_0^1 f(u_n - s u_0)u_0 \, ds \, dx = \int_0^1 \int_{\R^N} f(u_n - s u_0)u_0 \, dx \, ds.
\end{align*}
The family $\{ f(u_n - s u_0)u_0 \}$ is uniformly integrable and tight on $\R^N$, so that from the Vitali convergence theorem
\begin{equation}\label{E:F_convergence}
\begin{split}
\int_{\R^N} F(u_n) - F(v_n^1) \, dx &\to
 \int_0^1 \int_{\R^N} f(u_0 - s u_0)u_0 \, dx \, ds \\
&=   \int_{\R^N} \int_0^1 f(u_0 - s u_0)u_0  \, ds \, dx = \int_{\R^N} F(u_0) \, dx.
\end{split}
\end{equation}
Hence
$$
J(v_n^1) = J(u_n) - J(u_0) + o(1).
$$
Arguing similarly we obtain that $P(v_n^1) = P(u_n)-P(u_0)+o(1)=-P(u_0)+o(1)$.

\textbf{Step 1.}
Suppose that there is $r > 0$ such that
\begin{equation}\label{lionsCond}
\lim_{n \to +\infty} \sup_{z \in \R^N} \int_{B(z, r)} |v_n^1|^2 \, dx = 0.
\end{equation}
From Lemma \ref{lionsLem} there follows that $v_n^1 \to 0$ in $L^t (\R^N)$ for $t \in (2,2^*)$.
From (F1)--(F3), for every $\varepsilon > 0$ we have
$$
|f(u)| \leq \varepsilon |u| + C_\varepsilon |u|^{p-1}.
$$
Then

\begin{align*}
o(1) &= J'(u_n)(v_n^1) = \langle u_n, u_n-u_0 \rangle_\ell - \int_{\R^N} f(u_n)(u_n-u_0) \, dx \\
&= \| v_n^1 \|^2_\ell + \langle u_0, u_n - u_0 \rangle_\ell - \int_{\R^N} f(u_n)(u_n-u_0) \, dx.
\end{align*}
Clearly $\langle u_0, u_n - u_0 \rangle_\ell \to 0$ and from the H\"older inequality
\begin{align*}
\limsup_{n\to \infty} &\left| \int_{\R^N} f(u_n^+)(u_n-u_0) \, dx \right| \leq \limsup_{n\to \infty} \left( \varepsilon \int_{\R^N} |u_n^+| |u_n-u_0| \, dx +  C_\varepsilon \int_{\R^N} |u_n^+|^{p-1} |u_n-u_0| \, dx \right) \\ 
&\leq \limsup_{n\to \infty} \left( \varepsilon \| u_n^+ \|_{L^2 (\R^N)} \|u_n-u_0\|_{L^2 (\R^N)} + C_\varepsilon \| u_n^+ \|_{L^p (\R^N)}^{p-1} \|u_n-u_0\|_{L^p (\R^N)} \right) \\
&= \varepsilon \limsup_{n\to\infty} \| u_n^+ \|_{L^2 (\R^N)} \|u_n-u_0\|_{L^2 (\R^N)}.
\end{align*}
Taking $\varepsilon \to 0^+$ and using the boundedness of $(u_n)$ in $L^2 (\R^N)$ we get that
$$
\int_{\R^N} f(u_n)(u_n-u_0) \, dx \to 0.
$$
Thus $v_n^1\to 0$ i.e. $u_n \to u_0$ in $X^1 (\R^N)$ and the proof is completed with $k = 0$. Indeed, conditions (b) and (c) are empty, (d) and (e) were just proven with empty sums, (f) and $P(u_0)=0$ follow from the continuity of $P$.

\textbf{Step 2.}
Suppose that \eqref{lionsCond} does not hold. Then we may find $(y_n^1) \subset \Z^N$ and $n_0$ such that
$$
\inf_{n \geq n_0} \int_{B(y_n^1, r+\sqrt{N})} |v_n^1|^2 \, dx > 0
$$ 
It is standard to verify that $|y_n^1| \to \infty$. Set $w_n^1 := u_n (\cdot + y_n^1)\geq 0$. It is clear that $(w_n^1) \subset H^1 (\R^N)$, since $H^1 (\R^N)$ is a translation-invariant subspace.

Note that $0<\frac{N(p-2)}{2(p-1)}<1$. Therefore, we can choose $s\in \left( \max \left\{ \frac{N(p-2)}{2(p-1)}, \frac12 \right\}, 1 \right)$. Then we have $p<\frac{2N-2}{N-2}<\frac{2N}{N-2s}$. Then $(u_n)$ is bounded in $H^s (\R^N)$, and since $H^s (\R^N)$ is translation-invariant, $(w_n^1) \subset H^s (\R^N)$ is also bounded in $H^s (\R^N)$. Hence, there is $w^1 \in H^s (\R^N)$ such that $w_n^1 \weakto w^1$ in $H^s (\R^N)$, $w_n^1 \to w^1$ in $L^t_{loc} (\R^N)$ for $t \in [2,2^*_s)$ and $w_n^1 (x) \to w^1 (x)$ for a.e. $x \in \R^N$.

Observe that
$$
\inf_n \int_{B(0, r+\sqrt{N})} |w_n^1|^2 \, dx 
=\inf_n \int_{B(y_n^1, r+\sqrt{N})} |u_n|^2 \, dx
\geq \inf_n \int_{B(y_n^1, r+\sqrt{N})} |v_n^1|^2+|u_0|^2 \, dx
> 0,
$$
and therefore $w^1 \neq 0$. Then, from Lemma \ref{H1-translations-bounded}, for every test function $\varphi \in \cC_0^\infty (\R^N)$
$$
|J'(u_n) (\varphi(\cdot - y_n^1))| \leq \| \nabla J(u_n) \|_\ell \| \varphi (\cdot - y_n^1) \|_\ell \to 0 \cdot \left( \| \nabla \varphi \|_{L^2 (\R^N)}^2 + \ell \| \varphi \|_{L^2 (\R^N)}^2 \right)^{1/2} = 0,
$$
and
\begin{align*}
&\quad o(1) = J'(u_n) (\varphi(\cdot - y_n^1)) \\
&= \int_{\R^N} \nabla u_n \nabla \varphi(\cdot - y_n^1) \, dx - \frac{(N-2)^2}{4} \int_{\R^N} \frac{u_n \varphi(\cdot - y_n^1)}{|x|^2} \, dx - \int_{\R^N} g(u_n) \varphi(\cdot - y_n^1) \, dx = (*).
\end{align*}
Hence, fix a test function $\varphi \in \cC_0^\infty (\R^N)$ and choose $n_0$ such that for $n \geq n_0$ we have\linebreak $\mathrm{dist}\, (\supp \varphi, -y^1_n) > 1$. Then, for $n \geq n_0$ we also have
$$
\int_{\R^N} \frac{u_n \varphi(\cdot - y_n^1)}{|x|^2} \, dx = \int_{\R^N} \frac{u_n(\cdot + y_n^1) \varphi}{|x+y_n^1|^2} \, dx = \int_{\supp \varphi} \frac{u_n(\cdot + y_n^1) \varphi}{|x+y_n^1|^2} \, dx.
$$
Similarly
$$
\int_{\R^N} \nabla u_n \nabla \varphi(\cdot - y_n^1) \, dx = \int_{\R^N} \nabla u_n(\cdot + y_n^1) \nabla \varphi \, dx = \int_{\supp \varphi} \nabla u_n(\cdot + y_n^1) \nabla \varphi \, dx.
$$
Thus
\begin{align*}
\mathclap{
(*) = \int_{\supp \varphi} \nabla u_n(\cdot + y_n^1) \nabla \varphi \, dx - \frac{(N-2)^2}{4} \int_{\supp \varphi} \frac{u_n(\cdot + y_n^1) \varphi}{|x+y_n^1|^2} \, dx - \int_{\supp \varphi} g( u_n(\cdot + y_n^1) ) \varphi \, dx = (**).}
\end{align*}
From Lebesgue dominated convergence theorem we get
$$
\int_{\supp \varphi} g( u_n(\cdot + y_n^1) ) \varphi \, dx \to \int_{\supp \varphi} g( w^1) \varphi \, dx.
$$
Since $\supp \varphi$ is compact and $|y_n^1| \to \infty$, for every $n$ we can choose $m(n)$ such that $m(n) \to \infty$ and $|x+y_n^1| \geq m(n)$ for $x\in\supp \varphi$. Then
$$
\int_{\supp \varphi} \frac{|u_n(\cdot + y_n^1) \varphi|}{|x+y_n^1|^2} \, dx \leq \frac{1}{m(n)^2} \int_{\supp \varphi} |u_n(\cdot + y_n^1) \varphi| \, dx \leq \frac{1}{m(n)^2} \|u_n\|_{L^2 (\R^N)} \| \varphi\|_{L^2 (\R^N)} \to 0,
$$
since $(u_n)$ is bounded in $L^2 (\R^N)$. Then
\begin{align*}
(**) = \int_{\supp \varphi} \nabla u_n(\cdot + y_n^1) \nabla \varphi \, dx - \int_{\supp \varphi} g( w^1) \varphi \, dx + o(1).
\end{align*}
Hence, we get
$$
\int_{\R^N} \nabla w_n^1 \nabla \varphi \, dx \to \int_{\R^N} g(w^1) \varphi \, dx
$$
for all $\varphi \in \cC_0^\infty (\R^N)$. We can rewrite the left hand side as
$$
\int_{\R^N} \nabla w_n^1 \nabla \varphi \, dx = - \int_{\R^N} w_n^1 \Delta \varphi \, dx
$$
and therefore, since $w_n^1 \to w^1$ in $L^2_{\mathrm{loc}}(\R^N)$, we obtain
$$
- \int_{\R^N} w^1 \Delta \varphi \, dx = \int_{\R^N} g(w^1) \varphi \, dx,
$$
namely $w^1 \in H^s (\R^N)$ is a solution to
$$
-\Delta w^1 = g(w^1) \quad \mbox{in } \cD' (\R^N).
$$
From Theorem \ref{T:dystrybucje_na_H1}, we obtain that $w^1 \in H^1 (\R^N)$. In particular, $w^1$ is a weak solution to the problem
\begin{equation}\label{eq:limiting}
-\Delta w^1 = g(w^1)
\end{equation}
in $\R^N$. Therefore, by Remark \ref{R:problem_graniczny_wlasnosci}, $J_\infty '(w^1) = 0$ and $P_\infty (w^1) = 0$.

\textbf{Step 3.} Put $v_n^2=u_n-u_0-w^1(\cdot - y_n^1)=v_n^1-w^1(\cdot - y_n^1)$. Note that, by Lemma \ref{H1-translations-bounded} and \eqref{E:v_n^1-norm} we have
\begin{equation*}
\begin{split}
\|v_n^2\|^2_{\ell}&=\|v_n^1\|^2_\ell+\|w^1(\cdot-y_n^1)\|^2_\ell - 2\langle v_n^1,w^1(\cdot-y_n^1)\rangle_\ell
\\
&=\|u_n\|^2_\ell-\|u_0\|^2_\ell+o(1)+\|w^1(\cdot -y_n^1)\|^2_\ell
-2\langle u_n,w^1(\cdot -y_n^1)\rangle_\ell
+2\langle u_0,w^1(\cdot -y_n^1)\rangle_\ell\\
&=\|u_n\|^2_\ell-\|u_0\|^2_\ell+\left( \|\nabla w^1\|_{L^2 (\R^N)}^2 + \ell \| w^1 \|_{L^2 (\R^N)}^2 \right) - 2\langle u_n,w^1(\cdot -y_n^1)\rangle_\ell +o(1)=(*),
\end{split}
\end{equation*}
where the term $2\langle u_0,w^1 (\cdot - y_n^1)\rangle_\ell $ disappears by Lemma \ref{L:scalar_product_translation}. Then, we compute
\begin{align*}
\langle u_n, w^1 (\cdot - y_n^1) \rangle_\ell = &\int_{\R^N} \nabla u_n(\cdot + y_n^1) \nabla w^1 \,dx + \ell \int_{\R^N}  u_n(\cdot + y_n^1)  w^1 \,dx \\ &- \frac{(N-2)^2}{4} \int_{\R^N} \frac{u_n w^1 (\cdot - y_n^1)}{|x|^2} \, dx.
\end{align*}
We will show that
\begin{equation}\label{E:temp-new}
\int_{\R^N} \frac{u_n w^1 (\cdot - y_n^1)}{|x|^2} \, dx\to 0
\end{equation}
to finally get
\[
\begin{split}
(*)=&\|u_n\|^2_\ell-\|u_0\|^2_\ell+\left( \|\nabla w^1\|_{L^2 (\R^N)}^2 + \ell \| w^1 \|_{L^2 (\R^N)}^2 \right)\\
&-2 \left( \int_{\R^N} \nabla u_n(\cdot + y_n^1) \nabla w^1 \,dx + \ell \int_{\R^N}  u_n(\cdot + y_n^1)  w^1 \,dx  \right) +o(1) = (**).
\end{split}
\]
Now, using the fact that $w^1$ is a solution to \eqref{eq:limiting}, we obtain
\begin{align}\label{E:temp-new2}
\int_{\R^N} \nabla u_n(\cdot + y_n^1) \nabla w^1 \,dx + \ell \int_{\R^N}  u_n(\cdot + y_n^1)  w^1 \,dx = \int_{\R^N} f(w^1) w_n^1 \, dx.
\end{align}
Since $w_n^1$ is bounded in $L^2 (\R^N)$ and in $L^p (\R^N)$, $w_n^1 \to w^1$ in $L^2_{\mathrm{loc}} (\R^N)$ and in $L^p_{\mathrm{loc}} (\R^N)$, from Vitali convergence theorem
$$
\int_{\R^N} f(w^1) w_n^1 \, dx \to \int_{\R^N} f(w^1) w^1 \, dx = \|\nabla w^1\|_{L^2 (\R^N)}^2 + \ell \| w^1 \|_{L^2 (\R^N)}^2.
$$
Thus
\begin{align*}
(**) &= \|u_n\|^2_\ell-\|u_0\|^2_\ell - \left( \|\nabla w^1\|_{L^2 (\R^N)}^2 + \ell \| w^1 \|_{L^2 (\R^N)}^2 \right) + o(1).
\end{align*}
To prove \eqref{E:temp-new} take any $R>0$ and consider
\[
\int_{\R^N} \frac{u_n w^1 (\cdot - y_n^1)}{|x|^2} \, dx
=\int_{B(0,R)} \frac{u_n w^1 (\cdot - y_n^1)}{|x|^2} \, dx
+\int_{\R^N\setminus B(0,R)} \frac{u_n w^1 (\cdot - y_n^1)}{|x|^2} \, dx=: I_1+I_2
\]
By H\"{o}lder inequality
\[
|I_1|\leq \|u_n\|_{L^{\frac{2N}{N-2s}}(\R^n)} \cdot 
\left(\int_{B(0,R)} \frac{|w^1(x-y_n)|^{\frac{2N}{N+2s}}}{|x|^{\frac{4N}{N+2s}}} \, dx\right)^{\frac{N+2s}{2N}}.
\]
By the continuity of the embedding $X^1(\R^N)\subset H^s(\R^N)$, the sequence $(u_n)$ is bounded in $L^{\frac{2N}{N-2s}} (\R^N)$. 

Since $w^1 \in H^1 (\R^N)$ is a weak solution to \eqref{eq:limiting}, from \cite[Theorem 2]{PankovDecay}, there is $\alpha > 0$ such that $|w^1(x)| \lesssim e^{-\alpha |x|}$. For $x\in B(0,R)$ we have $|x-y_n|\geq |y_n|-|x|\geq |y_n|-R\to\infty$, so one has
     \[
     \delta_n = \sup_{x\in B(0,R)} |w^1(x-y_n)| \lesssim  e^{-\alpha (|y_n|-R))} \to 0 \quad \mbox{as } n\to\infty.
     \]
Moreover, since $s > \frac12$,
$$
\frac{4N}{N+2s} < N.
$$
Thus $\int_{B(0,R)} |x|^{- \frac{4N}{N+2s}} \, dx < +\infty$. Hence
$$
|I_1| \lesssim \delta_n \to 0 \quad \mbox{as } n \to \infty.
$$
Since $u_n, w^1\in L^2 (\R^N)$, and $(u_n)$ is bounded in $L^2 (\R^N)$, we have
\[
|I_2| \leq \frac{1}{R^2} \|u_n\|_{L^2(\R^N)}\|w^1\|_{L^2(\R^N)}.
\]
Thus, for any $R > 0$, we get
$$
\limsup_{n\to\infty} \left| \int_{\R^N} \frac{u_n w^1 (\cdot - y_n^1)}{|x|^2} \, dx \right| \lesssim \frac{1}{R^2}.
$$
Hence,
$$
\int_{\R^N} \frac{u_n w^1 (\cdot - y_n^1)}{|x|^2} \, dx \to 0
$$
and the proof of \eqref{E:temp-new} is completed.

Arguing as in \eqref{E:F_convergence} for the pair of functions $(v_n^1, w^1)$ instead of $(u_n, u_0)$, and then directly from \eqref{E:F_convergence} we get
\begin{equation}\label{E:nonlinearity_decomposition}
\begin{split}
\int_{\R^N} F(v_n^2) \, dx 
&=\int_{\R^N} F(v_n^2(\cdot + y_n^1)) \, dx \\
&=\int_{\R^N} F(v_n^1(\cdot+y_n^1))\,dx -\int_{\R^N} F(w_1)\,dx +o(1)\\
&=\int_{\R^N} F(v_n^1)\,dx -\int_{\R^N} F(w_1)\,dx +o(1)\\
&=\int_{\R^N} F(u_n)\,dx -\int_{\R^N} F(u_0)\,dx - \int_{\R^N} F(w_1)\,dx +o(1).
\end{split}
\end{equation}
Combining the foregoing with $\|v_n^2\|_\ell=\|u_n\|_\ell^2 - \|u_0\|_\ell^2 - \left( \|\nabla w^1\|_{L^2 (\R^N)}^2 + \ell \| w^1 \|_{L^2 (\R^N)}^2 \right) +o(1)$ we obtain
\[
J(v_n^2)=J(u_n)-J(u_0)-J_\infty(w_1)+o(1)
\]
and, by Remark \ref{R:problem_graniczny_wlasnosci},
\[
P(v_n^2)=P(u_n)-P(u_0)-P_\infty(w_1)+o(1)=-P(u_0)+o(1).
\]
Consider 
\[
\Theta_2=\lim_{n \to +\infty} \sup_{z \in \R^N} \int_{B(z, r)} |v_n^2|^2 \, dx.
\]
If $\Theta_2=0$, similarly as in \eqref{lionsCond} and Step 1, using $J'(u_n)(v_n^2)=o(1)$, \eqref{E:temp-new}, and \eqref{E:temp-new2}, we get $v_n^2\to 0$ in $X^1(\R^N)$ and the proof is completed with $k=1$.

Suppose $\Theta_2>0$. Then we reason as in Step 2. We find the sequence $(y_n^2)\subset \Z^N$, $|y_n^2|\to\infty$ such that 
$$
\inf_{n \geq n_0} \int_{B(y_n^2, r+\sqrt{N})} |v_n^2|^2 \, dx > 0.
$$ 
It is classical to check that $|y_n^1-y_n^2|\to\infty$. We put $w_n^2=u_n(\cdot+y_n^2)\in H^1(\R^N)$. Then we find $w^2\in H^1(\R^N)$ such that 
\begin{itemize}
\item $w_n^2\weakto w^2$ in $H^s(\R^N)$ for $s$ sufficiently close to $1$,
\item $w_n^2\to w^2$ in $L^t_{loc}(\R^N)$ for $t\in[2,2^*_s)$,
\item $w^2$ is a weak solution of $-\Delta w^2 =g(w^2)$ on $\R^N$,
\item $J'_\infty(w^2)=0, \ P_\infty(w^2)=0$.
\end{itemize}
Put 
\[
v_n^3:=v_n^2-w^2(\cdot - y_n^2)
=u_n-u_0-w^1(\cdot - y_n^1)-w^2(\cdot - y_n^2).
\]
The same way as in \eqref{E:nonlinearity_decomposition} we get
\[
\int_{\R^N} F(v_n^3) \, dx 
=\int_{\R^N} F(u_n)\,dx 
-\int_{\R^N} F(u_0)\,dx 
- \int_{\R^N} F(w_1)\,dx 
- \int_{\R^N} F(w_2)\,dx +o(1).
\]
Moreover, by Lemma \ref{L:scalar_product_translation}, Lemma \ref{H1-translations-bounded}, and \eqref{E:temp-new}
\[
\begin{split}
\|v_n^3\|^2_\ell=&\|v_n^2\|^2_\ell + \|w^2(\cdot - y_n^2)\|^2_\ell
-2\langle v_n^2, w^2(\cdot - y_n^2)\rangle_\ell\\
=& \|v_n^2\|^2_\ell 
+\left( \|\nabla w^2\|_{L^2 (\R^N)}^2 + \ell \| w^2 \|_{L^2 (\R^N)}^2 \right)\\
&-2\left( \langle u_n,w^2(\cdot - y_n^2)\rangle_\ell
+\langle u_0,w^2(\cdot - y_n^2)\rangle_\ell\right)
-2\langle w^1(\cdot - y_n^1),w^2(\cdot - y_n^2)\rangle_\ell
\\
=&\|v_n^2\|^2_\ell 
+\left( \|\nabla w^2\|_{L^2 (\R^N)}^2 + \ell \| w^2 \|_{L^2 (\R^N)}^2 \right)\\
&-2\left(\langle u_n(\cdot + y_n^2), w^1 \rangle_\ell
+\frac{(N-2)^2}4\int_{\R^N} \frac{|w^2|^2 }{|x|^2} \, dx\right)
-2\langle w^1(\cdot - y_n^1),w^2(\cdot - y_n^2)\rangle_\ell
+o(1)\\
=&\|v_n^2\|^2_\ell 
-\left( \|\nabla w^2\|_{L^2 (\R^N)}^2 + \ell \| w^2 \|_{L^2 (\R^N)}^2 \right)-2\langle w^1(\cdot - y_n^1),w^2(\cdot - y_n^2)\rangle_\ell+o(1).
\end{split}
\]
By Cauchy-Schwarz inequality and Lemma \ref{L:MederskiGuo} we have
\[
\left| \int_{\R^N} \frac{ w^1(\cdot - y_n^1)w^2(\cdot - y_n^2)}{|x|^2} \, dx \right| \leq \left(
\int_{\R^N} \frac{ (w^1(\cdot - y_n^1))^2}{|x|^2} \, dx \right)^{1/2} \left(
\int_{\R^N} \frac{ (w^2(\cdot - y_n^1))^2}{|x|^2} \, dx \right)^{1/2}\to 0
\]
and 
\[
\left|\int_{\R^N} \frac{ w^1w^2(\cdot - (y_n^2-y_n^2))}{|x|^2} \, dx \right| \leq \left(
\int_{\R^N} \frac{ (w^1)^2}{|x|^2} \, dx \right)^{1/2} \left(
\int_{\R^N} \frac{ (w^2(\cdot - (y_n^1-y_n^2)))^2}{|x|^2} \, dx \right)^{1/2} \to 0.
\]
Therefore, again by Lemma $\ref{L:MederskiGuo}$ and utilizing $|y_n^1-y_n^2|\to\infty$ we get
\[
\langle w^1(\cdot - y_n^1),w^2(\cdot - y_n^2)\rangle_\ell=\langle w^1,w^2(\cdot - (y_n^2-y_n^1))\rangle_\ell + o(1)\to 0.
\]
Hence,
\[
\begin{split}
\|v_n^3\|^2_\ell&=\|v_n^2\|^2_\ell 
-\left( \|\nabla w^2\|_{L^2 (\R^N)}^2 + \ell \| w^2 \|_{L^2 (\R^N)}^2 \right)+o(1)\\
&=\|u_n\|^2_\ell-\|u_0\|^2_\ell-
\sum_{j=1}^2 \left( \|\nabla w^j\|_{L^2 (\R^N)}^2 + \ell \| w^j \|_{L^2 (\R^N)}^2 \right)+o(1).
\end{split}
\]
Finally we can write
\[
J(v_n^3)=J(u_n)-J(u_0)-\sum_{j=1}^2 J_\infty(w^j)+o(1)
\]
and
\[
P(v_n)=P(u_n)-P(u_0)-\sum_{j=1}^2 P_\infty(w^j)=P(u_0)+o(1).
\]

\textbf{Step 4.} We can repeat the above procedure to construct, for some $k\in\Z$:
\begin{itemize}
\item the sequences $(y_n^j)\subset \Z^N$, $j=1,\ldots,k$ such that $|y_n^j|\to\infty$ and $|y_n^j-y_n^{j'}|\to \infty$ for $j\neq j'$,
\item $w_j\in H^1(\R^N)$, $j=1,\ldots,k$, such that $P_\infty(w^j)=0$,
\item $\|u_n\|^2_\ell\to \|u_0\|^2_\ell+\sum_{j=1}^k  \left( \|\nabla w^j\|_{L^2 (\R^N)}^2 + \ell \| w^j \|_{L^2 (\R^N)}^2 \right)$.
\end{itemize}
Note that, from (F1)--(F3), for every $\varepsilon > 0$, there is a constant $C_\varepsilon > 0$ such that
\begin{align*}
\| \nabla w^j \|_{L^2(\R^N)}^2 &+ \ell \| w^j \|_{L^2(\R^N)}^2 = \int_{\R^N} f(w^j)w^j \, dx \leq \varepsilon \|w^j\|_{L^2(\R^N)}^2 + C_\varepsilon \|w^j\|_{L^p(\R^N)}^p \\
&\leq C \varepsilon \left( \| \nabla w^j \|_{L^2(\R^N)}^2 + \ell \| w^j \|_{L^2(\R^N)}^2 \right) + C C_\varepsilon \left( \| \nabla w^j \|_{L^2(\R^N)}^2 + \ell \| w^j \|_{L^2(\R^N)}^2 \right)^{p/2},
\end{align*}
where we used that $J_\infty '(w^j)(w^j) = 0$ and $C > 0$ comes from Sobolev embeddings. Hence, choosing sufficiently small $\varepsilon > 0$,
$$
\frac{1 - C\varepsilon}{C C_\varepsilon} \leq \left(\| \nabla w^j \|_{L^2 (\R^N)}^2 + \ell \| w^j \|_{L^2(\R^N)}^2 \right)^{(p-2)/2}.
$$
It means that for every $j$,
$$
\| \nabla w^j \|_{L^2(\R^N)}^2 + \ell \| w^j \|_{L^2(\R^N)}^2 \gtrsim 1,
$$
hence the procedure needs to terminate after a finite number of steps, since $(u_n)$ is bounded in $X^1(\R^N)$.

\end{proof}

\section{Existence of a solution}\label{sect:5}

We will start by showing that the Poho\v{z}aev-Palais-Smale sequences are bounded. 

\begin{Lem}\label{Lem:PPS-bounded}
Suppose that $c > 0$ and $(u_n) \subset H^1 (\R^N)$ is a sequence such that $J(u_n) \to c$, $J'(u_n) \to 0$ and $P(u_n) \to 0$. Then $(u_n)$ is bounded in $X^1 (\R^N)$.
\end{Lem}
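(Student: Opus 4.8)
The strategy is the classical one for problems without the Ambrosetti--Rabinowitz condition: test the equation against $u_n$, form a suitable linear combination of the energy identity and the derivative identity, and let condition (F5) supply the missing coercivity. Since $u_n\in H^1(\R^N)\subset L^2(\R^N)\cap L^p(\R^N)$ and $|F(s)|+|f(s)s|\lesssim s^2+|s|^p$ with $2<p<2^*$, all of $\int_{\R^N}F(u_n)\,dx$, $\int_{\R^N}f(u_n)u_n\,dx$ and $\int_{\R^N}\bigl(F(u_n)-\tfrac1\nu f(u_n)u_n\bigr)\,dx$ are finite, and the identities
\[
J(u_n)=\tfrac12\|u_n\|_\ell^2-\int_{\R^N}F(u_n)\,dx,\qquad
J'(u_n)(u_n)=\|u_n\|_\ell^2-\int_{\R^N}f(u_n)u_n\,dx
\]
are valid. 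Moreover $J'(u_n)\to 0$ in the dual of $X^1(\R^N)$ gives $J'(u_n)(u_n)=o(1)\|u_n\|_\ell$.

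Next I would compute
\[
\nu J(u_n)-J'(u_n)(u_n)=\Bigl(\tfrac\nu2-1\Bigr)\|u_n\|_\ell^2-\nu\int_{\R^N}\Bigl(F(u_n)-\tfrac1\nu f(u_n)u_n\Bigr)\,dx
\]
and invoke (F5), namely $F(s)-\tfrac1\nu f(s)s\le\gamma_0 s^2$ with $\gamma_0<\ell\bigl(\tfrac12-\tfrac1\nu\bigr)$, to get
\[
\nu J(u_n)-J'(u_n)(u_n)\ge\Bigl(\tfrac\nu2-1\Bigr)\|u_n\|_\ell^2-\nu\gamma_0\int_{\R^N}u_n^2\,dx.
\]
Since $\xi\ge 0$ on $X^1(\R^N)$ we have $\ell\int_{\R^N}u_n^2\,dx\le\|u_n\|_\ell^2$; using this to absorb the last term (the case $\gamma_0\le 0$ being trivial, as that term is then already $\ge 0$) yields
\[
\nu J(u_n)-J'(u_n)(u_n)\ge\nu\Bigl(\tfrac12-\tfrac1\nu-\tfrac{\gamma_0^+}{\ell}\Bigr)\|u_n\|_\ell^2=:\kappa\,\|u_n\|_\ell^2,
\]
where $\gamma_0^+=\max\{\gamma_0,0\}$ and $\kappa>0$ precisely because $\gamma_0<\ell\bigl(\tfrac12-\tfrac1\nu\bigr)$, $\ell>0$ and $\nu>2$.

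Finally, the left-hand side equals $\nu c+o(1)+o(1)\|u_n\|_\ell$, so $\kappa\|u_n\|_\ell^2\le\nu c+o(1)+o(1)\|u_n\|_\ell$; viewed as a quadratic inequality in $\|u_n\|_\ell$ with positive leading coefficient $\kappa$, this forces $\sup_n\|u_n\|_\ell<\infty$, i.e.\ boundedness of $(u_n)$ in $X^1(\R^N)$. I expect the only delicate point to be the bookkeeping that makes $\kappa$ strictly positive: this is exactly what the refined constant $\gamma_0<\ell(\tfrac12-\tfrac1\nu)$ in (F5) (equivalently, (G5)) is designed for, combined with the positive semidefiniteness of $\xi$ used to bound $\int_{\R^N}u_n^2\,dx$ by $\tfrac1\ell\|u_n\|_\ell^2$. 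Note that the Poho\v{z}aev information $P(u_n)\to 0$ plays no role in this step; it is needed later, to place the limit of the sequence on the constraint $\cM$.
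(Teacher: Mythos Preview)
Your proof is correct. Both you and the paper form the combination $J(u_n)-\tfrac{1}{\nu}J'(u_n)(u_n)$ and invoke (G5)/(F5), but the paper organizes the argument in two stages: it first uses $P(u_n)\to 0$ together with $J(u_n)\to c$ to obtain $\bigl(\tfrac12-\tfrac{1}{2^*}\bigr)\xi(u_n)=c+o(1)$, hence boundedness of $\xi(u_n)$, and only then applies (G5) (in the form $G(s)-\tfrac{1}{\nu}g(s)s\le\gamma_0 s^2$ with $\gamma_0<0$) to bound $\|u_n\|_{L^2}$. Your route works directly in the $\|\cdot\|_\ell$-norm, exploiting $\xi\ge 0$ only through $\ell\|u_n\|_{L^2}^2\le\|u_n\|_\ell^2$, and never touches the Poho\v{z}aev information; as you remark, this in fact shows boundedness of arbitrary Palais--Smale sequences, not just of (PPS) sequences. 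This is a mild but genuine simplification; the paper's detour through $P(u_n)\to 0$ buys an explicit bound on $\xi(u_n)$ as a by-product, though that can equally be recovered a posteriori from your conclusion.
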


\begin{proof}
Since $J(u_n) \to c$, and using $P(u_n) \to 0$, we get
$$
\frac12 \xi (u_n) =  \int_{\R^N} G(u_n) \, dx + c + o(1) = \frac{1}{2^*} \xi(u_n) + c + o(1)
$$
and therefore
$$
\left(\frac12 - \frac{1}{2^*}\right) \xi(u_n) = c + o(1). 
$$
In particular, $(\xi(u_n))$ is bounded. Now, what remains is to show that $\|u_n\|_{L^2 (\R^N)}$ is bounded. For this purpose, using (G5), we consider

$$
J(u_n) - \frac{1}{\nu} J'(u_n)(u_n) = \left( \frac{1}{2} - \frac{1}{\nu} \right) \xi(u_n) - \int_{\R^N} \left( G(u_n) - \frac{1}{\nu} g(u_n) u_n \right) \, dx \geq \left( \frac{1}{2} - \frac{1}{\nu} \right) \xi(u_n) - \gamma_0 \|u_n\|_2^2.
$$
Hence
\begin{align}\label{ineq1}
- \gamma_0 \|u_n\|_{L^2(\R^N)}^2 \leq J(u_n) - \frac{1}{\nu} J'(u_n)(u_n) - \left( \frac{1}{2} - \frac{1}{\nu} \right) \xi(u_n) \leq C - \frac{1}{\nu} J'(u_n)(u_n)
\end{align}
for some $C > 0$. Since $J'(u_n) \to 0$, we get that 
\begin{align*}
\left| \frac{1}{\nu} J'(u_n)(u_n) \right| \leq \delta_n \|u_n\|_\ell = \delta_n \left( \xi(u_n) + \|u_n\|_{L^2(\R^N)}^2 \right)^{1/2} &\leq \delta_n \left( C + \|u_n\|_{L^2(\R^N)}^2 \right)^{1/2} \\
&\leq \delta_n \left(C^{1/2} + \|u_n\|_{L^2(\R^N)}\right).
\end{align*}
for some $C>0$ and $\delta_n \to 0$. Finally, combining it with \eqref{ineq1} we find that
$$
-\gamma_0 \|u_n\|_{L^2(\R^N)}^2 \leq C + C \|u_n\|_{L^2 (\R^N)}
$$
and $\|u_n\|_{L^2(\R^N)}$ is bounded, since $\gamma_0 < 0$.
\end{proof}

Now we are ready to prove the existence of a nontrivial solution.

\begin{proof}[Proof of Theorem \ref{Th:Main1-Existence}(a)]

Define $c_\infty := \inf_{\gamma \in \Gamma_\infty} \sup_{t \in [0,1]} J_\infty(\gamma(t))$, where
$$
\Gamma_\infty := \left\{ \gamma \in C([0,1]; H^1 (\R^N)) \ : \ \gamma(0) = 0, \ J_\infty(\gamma(1)) < 0 \right\}.
$$
It is known that $c_\infty > 0$ and that there is an optimal path $\gamma_\infty \in \Gamma_\infty$ such that
$$
c_\infty = \sup_{t\in [0,1]} J_\infty (\gamma_\infty(t)),
$$
see \cite{JeanjeanTanaka}. Moreover
$$
c_\infty = \inf \left\{ J_\infty(w) \ : \ w \in H^1 (\R^N) \setminus \{0\}, \ J_\infty'(w) = 0 \right\}.
$$
Since $J(u) < J_\infty(u)$ for $u \in H^1 (\R^N) \setminus \{0\}$ and $\Gamma_\infty \subset \Gamma$, we obtain the following
\begin{align*}
c_\infty = \sup_{t\in [0,1]} J_\infty (\gamma_\infty(t)) > \sup_{t\in [0,1]} J (\gamma_\infty(t)) \geq \inf_{\gamma \in \Gamma} \sup_{t\in[0,1]} J (\gamma(t)) = c,
\end{align*}
where $c$ and $\Gamma$ are given by \eqref{def:c} and \eqref{def:Gamma}, respectively.

Let $(u_n)$ be a Poho\v{z}aev-Palais-Smale sequence obtained in Lemma \ref{Lem:ExistencePPS}. From Lemma \ref{Lem:PPS-bounded} this sequence is bounded in $X^1 (\R^N)$ and, up to choosing a subsequence, (a)--(f) of Theorem \ref{Th:splitting} hold. Since $w^j \in H^1 (\R^N)$ are nontrivial critical points of $J_\infty$, we know that $J_\infty(w^k) \geq c_\infty$. Hence, from (f),
$$
J(u_n) = J(u_0) + \sum_{j=1}^k J_\infty (w^j) + o(1) \geq J(u_0) + k c_\infty + o(1).
$$
Hence 
$$
c \geq J(u_0) + k c_\infty > J(u_0) + k c.
$$
Since $u_0$ is a critical point of $J$ satisfying $P(u_0) = 0$, $J(u_0) = \left( \frac12 - \frac{1}{2^*} \right) \xi(u_0) \geq 0$, and therefore $k = 0$. Then, from (d) and (e), $u_n \to u_0$ in $X^1 (\R^N)$, $J(u_0) = c$ and $u_0$ is a nontrivial solution to \eqref{eq:main}. Moreover, the solution $u_0$ satisfies $P(u_0) = 0$.
\end{proof}

\section{Further properties of a solution}\label{sect:6}

\subsection{\texorpdfstring{$u_0$}{u0} is a minimizer on Poho\v{z}aev constraint}

Recall the Poho\v{z}aev constraint
$$
\cM := \left\{ u \in X^1 (\R^N) \setminus \{0\} \ : \ \xi(u) = 2^* \int_{\R^N} G(u) \, dx \right\}.
$$
We will show that the solution $u_0$ is a minimizer on $\cM$, namely that there holds
$$
J(u_0) = \inf_{\cM} J.
$$

\begin{proof}[Proof of Theorem \ref{Th:Main1-Existence}(b)]
Since $u_0 \in \cM$, clearly $c = J(u_0) \geq m := \inf_{\cM} J$. Hence it is enough to show that $m \geq c$. For that purpose, take any $u \in \cM$ and consider the rescaled function $U(\theta) := u(\cdot / \theta)$ for $\theta \geq 0$, where we set $U(0) = 0$. Then
$$
\| U(\theta) \|_\ell^2 = \theta^{N-2} \xi(u) + \theta^N \ell \|u\|_{L^2(\R^N)}^2
$$
and
$$
\Psi(\theta) := J(U(\theta)) = \frac{\theta^{N-2}}{2} \xi(u) - \theta^N \int_{\R^N} G(u) \, dx.
$$
Then, clearly $U : [0,\infty) \rightarrow X^1 (\R^N)$ is continuous. Since $u \in \cM$, we get
$$
\int_{\R^N} G(u) \, dx = \frac{1}{2^*} \xi(u) > 0.
$$
The last inequality holds, because there is no $L^2(\R^N)$ function that optimizes the Hardy inequality. Now, we consider $\frac{d}{d \theta} \Psi$. Note that
$$
\frac{d}{d \theta} \Psi(\theta) = \frac{(N-2) \theta^{N-3}}{2 } \xi(u) - N \theta^{N-1} \int_{\R^N} G(u) \, dx = \theta^{N-3} \left( \frac{N-2}{2 } \xi(u) - N \theta^{2} \int_{\R^N} G(u) \, dx \right).
$$
Since $u \in \cM$, we get $\frac{d}{d \theta} \Psi(1) = 0$, and clearly $\frac{d}{d \theta} \Psi(\theta) > 0$ for $\theta \in (0,1)$ and $\frac{d}{d \theta} \Psi(\theta) < 0$ for $\theta \in (1,\infty)$.

Hence, the following properties are satisfied by $U$,
$$
U(0) = 0, \ J(U(\vartheta)) < 0, \ U(1) = u \in U([0,\vartheta])
$$
for sufficiently large $\vartheta > 1$; moreover $\max_{\theta \in [0,\vartheta]} J(U(\theta)) = J(u)$. In particular, the path 
\begin{equation}\label{eq:path}
\gamma := U(\vartheta \cdot)
\end{equation}
belongs to $\Gamma$, where $\Gamma$ is given by \eqref{def:Gamma}. Hence $c \leq J(u)$. Since $u \in \cM$ was arbitrary, $c \leq m$ and the proof is completed.
\end{proof}

\subsection{Regularity of the solution}

\begin{proof}[Proof of Theorem \ref{Th:Main1-Existence}(c)]
Let $u_0\in X^1(\R^N)$ be a weak solution of \eqref{eq:main}. Fix $x_0\in\R^N\setminus\{0\}$ and its open neighborhood $\Omega\subset\overline{\Omega}\subset \R^N \setminus \{0\}$. Testing $J'(u_0)=0$ on the function $\varphi\in C_0^\infty(\Omega)$ we easily get that $u_0\in H^1(\Omega)$ is a weak solution of
\begin{equation}\label{E:C_2_temp}
    -\Delta u=g(u)+\frac{(N-2)^2}{4|x|^2} u \quad\text{on } \Omega.
\end{equation}

Observe that 
\[
\left|g(u)+\frac{(N-2)^2}{4|x|^2} u \right|\lesssim |u|+|u|^{p-1} \leq (1+|u|^{p-2})(1+|u|)
\]
and $|u|^{p-2}\in L^{N/2}(\Omega)$. Therefore, by Brezis-Kato lemma (see \cite[Lemma B.3]{Struwe}), $u\in L^q_{\loc}(\Omega)$ for any $q<\infty$. Then, by Cald\'{e}ron-Zygmund inequality (see \cite[Lemma B.2]{Struwe}), $u\in W^{2,q}_{\loc}(\Omega)$ and next, by Sobolev embedding theorem, $u\in C^{1,\alpha}_{\loc}(\Omega)$. Since $x_0$ was an arbitrary point, we conclude $u\in C^{1,\alpha}_{\loc}(\R^N\setminus\{0\})$.
\end{proof}

\begin{proof}[Proof of Theorem \ref{Th:Main1-Existence}(d)]
We have already shown that $u\in C^{1,\alpha}_{\loc}(\R^N \setminus \{0\})$. Since $g$ is H\"older continuous, by Schauder estimates we get $u\in C^{2}(\R^N \setminus \{0\})$.
\end{proof}

\subsection{There is a radial solution, which is a minimizer on \texorpdfstring{$\cM$}{M}}

For a nonnegative, measurable function $u$, let $u^*$ denote its radially decreasing rearrangement (also known as the Schwarz symmetrization). The well-known Pólya-Szegő inequality states that if $u \in H^1 (\R^N)$, then $u^* \in H^1 (\R^N)$ and
$$
\int_{\R^N} |\nabla u^*|^2 \, dx \leq \int_{\R^N} |\nabla u|^2 \, dx.
$$
Moreover, the following properties hold
$$
\int_{\R^N} \frac{|u^*|^2}{|x|^2} \, dx \geq \int_{\R^N} \frac{u^2}{|x|^2} \, dx, \quad \int_{\R^N} G(u^*) \, dx = \int_{\R^N} G(u) \, dx.
$$
Observe that these inequalities imply that
$$
\xi(u^*) \leq \xi(u), \quad J(u^*) \leq J(u)
$$
as long as $u \in H^1 (\R^N)$. We will show that it also extends to the space $X^1 (\R^N)$.

\begin{Th}[Pólya-Szegő inequality in $X^1 (\R^N)$]
For any $u \in X^1 (\R^N)$, $u \geq 0$, there holds
$$
\xi(u^*) \leq \xi(u).
$$
\end{Th}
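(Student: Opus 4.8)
The plan is to reduce the statement for $u \in X^1(\R^N)$ to the already-known inequality for $H^1(\R^N)$ functions by a density-and-approximation argument. Recall that $X^1(\R^N)$ is the completion of $H^1(\R^N)$ under $\|\cdot\|$, so there is a sequence $u_n \in H^1(\R^N)$ with $u_n \to u$ in $X^1(\R^N)$; since $u \geq 0$, replacing $u_n$ by $|u_n|$ (which stays in $H^1$, does not increase $\|\cdot\|$, and converges to $|u| = u$ in $L^2$, hence weakly in $X^1$) we may assume $u_n \geq 0$. For each $n$ we have $\xi(u_n^*) \leq \xi(u_n)$, and $\xi(u_n) \to \xi(u)$ because $\|u_n\| \to \|u\|$ and $\int u_n^2 \to \int u^2$ (the latter from the continuous embedding $X^1 \subset L^2$). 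So it remains to control $\liminf_n \xi(u_n^*)$ from below by $\xi(u^*)$.

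First I would show that $(u_n^*)$ is bounded in $X^1(\R^N)$: indeed $\xi(u_n^*) \leq \xi(u_n)$ is bounded, and $\int (u_n^*)^2 = \int u_n^2$ is bounded (equimeasurability of rearrangement), so $\|u_n^*\|_\ell^2 = \xi(u_n^*) + \ell\int(u_n^*)^2$ is bounded. Hence, passing to a subsequence, $u_n^* \weakto v$ in $X^1(\R^N)$ for some $v \in X^1(\R^N)$. By the local compactness of the embedding $X^1(\R^N) \subset L^2_{\loc}(\R^N)$ and the fact that $u_n \to u$ in $L^2_{\loc}$, we get $u_n^* \to v$ in $L^2_{\loc}$; combining this with the $L^1$-contractivity of rearrangement (or directly, the nonexpansivity $\|u_n^* - u_m^*\|_{L^2} \leq \|u_n - u_m\|_{L^2}$, which would already give $u_n^* \to u^*$ in $L^2$), one identifies $v = u^*$ a.e. The cleanest route: since $u_n \to u$ in $L^2(\R^N)$ (from $X^1 \subset L^2$ continuously and $u_n \to u$ in $X^1$), nonexpansivity of Schwarz symmetrization on $L^2$ gives $u_n^* \to u^*$ in $L^2(\R^N)$; together with boundedness in $X^1$ this forces the weak $X^1$-limit to be $u^*$.

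Finally, I would use weak lower semicontinuity of $\xi$ on $X^1(\R^N)$. Since $\xi(w) = \|w\|_\ell^2 - \ell \int w^2$ and $w \mapsto \|w\|_\ell$ is a norm on the Hilbert space $X^1(\R^N)$, it is weakly lower semicontinuous; and since $u_n^* \to u^*$ strongly in $L^2$, the term $\ell\int (u_n^*)^2 \to \ell \int (u^*)^2$ converges. Therefore
$$
\xi(u^*) = \|u^*\|_\ell^2 - \ell\int_{\R^N}(u^*)^2\,dx \leq \liminf_{n\to\infty}\Big(\|u_n^*\|_\ell^2 - \ell\int_{\R^N}(u_n^*)^2\,dx\Big) = \liminf_{n\to\infty}\xi(u_n^*) \leq \limsup_{n\to\infty}\xi(u_n) = \xi(u),
$$
which is the claim. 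The main obstacle is the middle step: making rigorous that the Schwarz symmetrizations $u_n^*$ converge (in a sense strong enough to pass to the limit in both pieces of $\xi$) to $u^*$, and in particular checking that the weak $X^1$-limit of $(u_n^*)$ really is $u^*$ rather than some other function — this is where the nonexpansivity of rearrangement on $L^2$ and the continuity of $X^1 \hookrightarrow L^2$ do the essential work.
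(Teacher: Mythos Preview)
Your argument is correct and follows essentially the same route as the paper: approximate $u$ by nonnegative $H^1$-functions, apply the classical P\'olya--Szeg\H{o} inequality to each approximant, use boundedness of $(u_n^*)$ in $X^1(\R^N)$ to extract a weak limit, identify that limit as $u^*$ via the $L^2$-contractivity of rearrangement, and conclude by weak lower semicontinuity of the norm together with $\|u^*\|_{L^2}=\|u\|_{L^2}$. Your treatment is in fact slightly more explicit than the paper's --- you justify the nonnegative approximation by passing to $|u_n|$ and you spell out the $L^2$-nonexpansivity step that the paper uses tacitly.
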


\begin{proof}
Take any $u \in X^1 (\R^N) \setminus H^1 (\R^N)$, $u \geq 0$ and take a sequence $u_n \in H^1 (\R^N)$, $u_n \geq 0$ such that $u_n \to u$ in $X^1 (\R^N)$. Consider $v_n := (u_n)^*$. Then, clearly
$$
\xi(v_n) \leq \xi(u_n), \quad \| v_n \| \leq \| u_n \|.
$$
Since $u_n \to u$ in $X^1(\R^N)$, $(v_n)$ is bounded in $X^1 (\R^N)$ and therefore, up to a subsequence, $v_n \weakto w$ in $X^1 (\R^N)$ and $v_n \to w$ in $L^2_{\mathrm{loc}}(\R^N)$ for some $w \in X^1 (\R^N)$. Since $v_n \to u^*$ in $L^2 (\R^N)$, from the uniqueness of the limit in $L^2 (\R^N)$, $w = u^*$ almost everywhere. Hence $u^* \in X^1 (\R^N)$. Since $\| \cdot \|$ is weakly lower semicontinuous,
$$
\| u^* \|^2 \leq \liminf_{n\to\infty} \|v_n\|^2 \leq \liminf_{n\to\infty} \|u_n\|^2 = \|u\|^2.  
$$
Taking into account that $\| u^* \|_{L^2 (\R^N)} = \| u \|_{L^2 (\R^N)}$, we obtain that
$$
\xi(u^*) \leq \xi(u).
$$
\end{proof}

\begin{proof}[Proof of Theorem \ref{Th:Main1-Existence}(e)]
We assume in addition that $g$ is odd. Let $u_0 \in X^1 (\R^N)$ be the solution that minimizes $J$ on $\cM$, and let $\gamma_0$ denote the path given by \eqref{eq:path} associated with $u_0$. Then we consider the Schwarz symmetrization $|\gamma (t)|^*$ of $|\gamma(t)|$ for every $t \in [0,1]$. Then
$$
|\gamma (0)|^* = 0^* = 0, \ J(|\gamma(1)|^*) \leq J(|\gamma(1)|) \leq J(\gamma(1)) < 0
$$
and therefore $|\gamma (\cdot )|^* \in \Gamma$, where $\Gamma$ is given by \eqref{def:Gamma}. Hence
$$
c \leq \max_{t \in [0,1]} J (|\gamma (t)|^*).
$$
On the other hand
$$
\max_{t \in [0,1]} J (|\gamma (t)|^*) \leq \max_{t \in [0,1]} J (|\gamma (t)|)\leq \max_{t \in [0,1]} J(\gamma(t)) = J(u_0) = c.
$$
and
$$
J (|\gamma (t)|^*) \leq J(|\gamma(t)|) \leq J(\gamma(t)) < c
$$
for $t \neq \frac{1}{\vartheta}$, where $\vartheta$ is the same as in \eqref{eq:path}. Thus
$$
J (\gamma^* (1/\vartheta) ) = c.
$$
Define $v_0 := \gamma^* (1/\vartheta)$. Then, from Lemma \ref{lemma:critical_point_on_path}, $v_0$ is a critical point of $J$ and $J(v_0) = c$. To see that $v_0 \in \cM$ observe that
$$
v_0 = \gamma^*(1/\vartheta) = |u_0|^*
$$
and from $J(v_0) = J(u_0)$ we immediately obtain $\xi(v_0) = \xi(u_0)$. Hence
$$
\xi(v_0) = \xi(u_0) = 2^* \int_{\R^N} G(u_0) \, dx = 2^* \int_{\R^N} G(v_0) \, dx
$$
and $v_0 \in \cM$. From Theorem \ref{Th:Main1-Existence}(c), $v_0 \in C^{1,\alpha}_{\loc}(\R^N \setminus \{0\})$ for some $\alpha \in (0,1)$. Since $v_0$ is a radial function (with a small abuse of notation, $v_0(x)=v_0(r)$, $r=|x|$) we can rewrite \eqref{eq:main}, in a weak sense, as
\[
-v_0''-\frac{N-1}{r} v_0'=g(v_0)+\frac{(N-2)^2}{4r^2} v_0 \
\Longleftrightarrow \ -\left(v_0'r^{N-1}\right)'=r^{N-1}\left(g(v_0)+\frac{(N-2)^2}{4r^2} v_0\right).
\]
The right-hand side is continuous in $r$, hence by the standard bootstrap argument the map $r \mapsto v_0'(r)r^{N-1}\in C^1$ on its domain, and therefore $v_0\in C^2$ in a neighborhood of $x_0$. Since $x_0$ was an arbitrary point, we conclude $v_0\in C^2(\R^N\setminus\{0\})$.
\end{proof}

\subsection{Nonnegative solutions do not belong to \texorpdfstring{$H^1(\R^N)$}{H1}}

\begin{proof}[{Proof of Theorem \ref{Th:Main1-Existence}(f)}]
Let $u \in X^1 (\R^N) \cap C^2 (\R^N \setminus \{0\})$ be a nonnegative solution to \eqref{eq:main}. As in the argument in the proof of \cite[Theorem 1.1(iv)]{LiLiTang}, we introduce
$$
u(x) = |x|^\tau w(x),
$$
where $\tau\in\left(-\frac{N-2}2,0\right)$. Then, $w$ solves
$$
- \div \left( |x|^{2\tau} \nabla w \right) = |x|^\tau \left( \frac{\left(\tau + \frac{N-2}{2} \right)^2}{|x|^2} |x|^\tau w + g(|x|^\tau w) \right).
$$
Since, by (G1)--(G3), $g(s) \gtrsim -s$ for $s \geq 0$, we find a radius $\rho > 0$ such that
$$
\frac{\left(\tau + \frac{N-2}{2} \right)^2}{|x|^2} |x|^\tau w + g(|x|^\tau w)  \geq 0 \quad \mbox{for} \ 0 < |x| < \rho. 
$$
Then
$$
- \div \left( |x|^{2\tau} \nabla w \right) \geq 0 \quad \mbox{in } B(0,\rho) \setminus \{0\}.
$$
Repeating the argument in \cite{LiLiTang}, 
$$
u(x) = |x|^\tau w(x) \gtrsim |x|^\tau \quad \mbox{in a neighborhood of the origin},
$$
i.e. $u(x)\to \infty$ for $x\to 0$. Therefore, we can choose a radius $\widetilde{\rho} > 0$ such that $u(x) > 0$ and $g(u(x)) \geq 0$ in $B(0,\widetilde{\rho}) \setminus \{0\}$. Hence, $u$ satisfies the inequality
$$
-\Delta u - \frac{(N-2)^2}{4 |x|^2} u \geq 0 \quad \mbox{in } B(0,\widetilde{\rho}) \setminus \{0\}.
$$
From \cite[Proposition 2.1]{Smets}, $u \not\in H^1 (B(0,\widetilde{\rho}))$.
\end{proof}

\section{Existence of non-radial solutions}\label{sect:7}

In this section we will prove Theorem \ref{Th:Main2}. Following \cite{MR4173560}, we recall the subspace $X_\tau$ given by \eqref{eq:Xtau} and the group $\mathcal{O} = \mathcal{O}(M) \times \mathcal{O}(M) \times \mathcal{O}(N-2M)$ acting on $\R^N = \R^M \times \R^M \times \R^{N-2M}$, $N \geq 4$. Repeating the reasoning from \cite[Corollary 3.2]{MR4173560} we see that the following concentration-compactness principle holds in the subspace $H^s_\mathcal{O} (\R^N)$ of $\cO$-invariant functions in $H^s (\R^N)$.

\begin{Cor}
Suppose that $(u_n) \subset H^s_\mathcal{O} (\R^N)$ is bounded and for all $r > 0$
$$
\lim_{n\to\infty} \sup_{z \in \R^{N-2M}} \int_{B((0,0,z),r)} |u_n|^2 \, dx = 0.
$$
Then $u_n \to 0$ in $L^p (\R^N)$ for any $p \in \left(2, \frac{2N}{N-2s} \right)$.
\end{Cor}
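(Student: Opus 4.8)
The plan is to deduce the statement from the (fractional) Lions vanishing lemma in $H^s(\R^N)$, exactly along the lines of \cite[Corollary 3.2]{MR4173560}. That lemma requires the a priori stronger hypothesis $\lim_{n\to\infty}\sup_{y\in\R^N}\int_{B(y,r)}|u_n|^2\,dx=0$ for every $r>0$, and yields $u_n\to 0$ in $L^p(\R^N)$ for $p\in\big(2,\frac{2N}{N-2s}\big)$. So the only real task is to upgrade the axis-restricted hypothesis to the full one by means of $\mathcal{O}$-invariance: the underlying principle is that an $L^2$-bounded $\mathcal{O}$-invariant family cannot concentrate mass away from the fixed-point set $\{x_1=x_2=0\}\cong\R^{N-2M}$ of the $\mathcal{O}(M)\times\mathcal{O}(M)$-action, since the orbit of a ball centred off that set splits into arbitrarily many pairwise disjoint copies.

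First I would fix $r>0$ and $\varepsilon>0$ and set $C:=\sup_n\|u_n\|_{L^2(\R^N)}^2<\infty$. Writing $y=(y_1,y_2,y_3)\in\R^M\times\R^M\times\R^{N-2M}$, the heart of the matter is the geometric claim: there is $k(\rho)\in\N$ with $k(\rho)\to\infty$ as $\rho\to\infty$ such that whenever $\max\{|y_1|,|y_2|\}\ge\rho$ there exist $g_1,\dots,g_{k(\rho)}\in\mathcal{O}(M)\times\mathcal{O}(M)\times\mathrm{id}_{N-2M}$ with the balls $B(g_iy, r)$ pairwise disjoint. To see this when $|y_1|\ge\rho$, I would pick rotations $a_i\in\mathcal{O}(M)$ so that the points $a_iy_1$ on the sphere of radius $|y_1|$ in $\R^M$ are pairwise at distance larger than $2r$ — a standard packing count on $S^{M-1}$ furnishes such a family with cardinality $\gtrsim(\rho/r)^{M-1}$, which diverges precisely because $M\ge2$ — and then put $g_i:=(a_i,\mathrm{id},\mathrm{id})$, so that $|g_iy-g_jy|=|a_iy_1-a_jy_1|>2r$; the case $|y_2|\ge\rho$ is symmetric.

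Next I would invoke $\mathcal{O}$-invariance of the $u_n$, which gives $\int_{B(g_iy, r)}|u_n|^2\,dx=\int_{B(y,r)}|u_n|^2\,dx$ for each $i$; summing over the disjoint balls yields $k(\rho)\int_{B(y,r)}|u_n|^2\,dx\le C$ for every $y$ with $\max\{|y_1|,|y_2|\}\ge\rho$ and every $n$. Choosing $\rho=\rho(\varepsilon)$ so large that $k(\rho)>C/\varepsilon$ gives $\int_{B(y,r)}|u_n|^2\,dx<\varepsilon$ uniformly in $n$ for all such $y$. For the complementary points $\max\{|y_1|,|y_2|\}\le\rho$ one has $B(y,r)\subset B((0,0,y_3), r+\sqrt{2}\rho)$, hence $\int_{B(y,r)}|u_n|^2\,dx\le\sup_{z\in\R^{N-2M}}\int_{B((0,0,z), r+\sqrt{2}\rho)}|u_n|^2\,dx$, which tends to $0$ by the hypothesis applied with radius $r+\sqrt{2}\rho$. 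Combining the two regimes, $\limsup_{n\to\infty}\sup_{y\in\R^N}\int_{B(y,r)}|u_n|^2\,dx\le\varepsilon$; letting $\varepsilon\to0$ finishes the reduction, and the Lions lemma then delivers the conclusion.

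I expect the sole non-routine step to be the geometric/combinatorial lemma producing $k(\rho)\to\infty$; everything else is bookkeeping. Its validity is structural: each $\R^M$-factor with $M\ge2$ carries an $\mathcal{O}(M)$-action whose orbits on spheres of large radius spread over arbitrarily many disjoint $r$-balls, so $L^2$-mass can survive only near the $(N-2M)$-dimensional axis — exactly where the assumed vanishing applies. (When $N=2M$ that axis is a single point and the argument degenerates gracefully, the hypothesis reducing to $\int_{B(0,r)}|u_n|^2\,dx\to0$.)
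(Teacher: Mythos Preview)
Your proposal is correct and is precisely the argument the paper intends: the paper's own ``proof'' consists solely of the sentence ``Repeating the reasoning from \cite[Corollary 3.2]{MR4173560},'' and what you have written is exactly that reasoning (upgrading the axis-restricted vanishing to full vanishing via the orbit-counting observation that $M\ge 2$ gives $k(\rho)\to\infty$ disjoint copies of any off-axis ball, then invoking the fractional Lions lemma). There is nothing to add or correct.
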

Hence, with the same reasoning as in the proof of Lemma \ref{lionsLem}, we obtain the following concentration-compactness principle in $X^1_\mathcal{O} (\R^N)$.
\begin{Cor}\label{lionsLemWithSymmetry}
Suppose that $(u_n) \subset X^1_\mathcal{O} (\R^N)$ is bounded and for all $r > 0$
$$
\lim_{n\to\infty} \sup_{z \in \R^{N-2M}} \int_{B((0,0,z),r)} |u_n|^2 \, dx = 0.
$$
Then $u_n \to 0$ in $L^p (\R^N)$ for any $p \in \left(2, 2^* \right)$.
\end{Cor}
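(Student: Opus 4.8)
The plan is to deduce Corollary \ref{lionsLemWithSymmetry} from the $\mathcal{O}$-symmetric concentration--compactness statement in $H^s_{\mathcal{O}}(\R^N)$ recorded just above, by exactly the same reduction that produces Lemma \ref{lionsLem} from the classical Lions lemma. The guiding observation is that the continuous embedding $X^1(\R^N) \subset H^s(\R^N)$ established in Section \ref{sect:2} is compatible with $\mathcal{O}$-invariance, so a bounded sequence in $X^1_{\mathcal{O}}(\R^N)$ is automatically a bounded sequence in $H^s_{\mathcal{O}}(\R^N)$ for every $s \in (0,1)$, and the spatial-localization hypothesis is transported unchanged.

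Concretely, I would first fix $p \in (2, 2^*)$. Since $\tfrac{2N}{N-2s} \to 2^*$ as $s \to 1^-$, I can pick $s \in (0,1)$ with $p < \tfrac{2N}{N-2s}$. By the embedding $X^1(\R^N) \subset H^s(\R^N)$ the sequence $(u_n)$ is bounded in $H^s(\R^N)$; moreover, since $\mathcal{O} \subset \mathcal{O}(N)$ acts by isometries of $\R^N$ fixing the origin, this action preserves both the $X^1$- and the $H^s$-norms and therefore commutes with the embedding, so in fact $(u_n) \subset H^s_{\mathcal{O}}(\R^N)$ and is bounded there. The vanishing hypothesis
$$
\lim_{n\to\infty} \sup_{z \in \R^{N-2M}} \int_{B((0,0,z),r)} |u_n|^2 \, dx = 0
$$
is literally the hypothesis of the preceding corollary, so applying it gives $u_n \to 0$ in $L^q(\R^N)$ for every $q \in \left(2, \tfrac{2N}{N-2s}\right)$, and in particular $u_n \to 0$ in $L^p(\R^N)$. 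As $p \in (2,2^*)$ was arbitrary, the conclusion follows.

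There is no genuine obstacle in this argument: it is a verbatim transcription of the proof of Lemma \ref{lionsLem}, with $\R^N$ replaced by $\R^{N-2M}$ in the supremum over translations and with the $\mathcal{O}$-symmetric Lions lemma in place of the non-symmetric one. The single point that deserves an explicit sentence is the symmetry-compatibility of the fractional embedding, namely $X^1_{\mathcal{O}}(\R^N) \subset H^s_{\mathcal{O}}(\R^N)$, which is immediate from the $\mathcal{O}(N)$-invariance of the norms involved --- the Hardy weight $|x|^{-2}$ being radial, and hence $\mathcal{O}(N)$-invariant.
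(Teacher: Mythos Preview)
Your proposal is correct and follows exactly the paper's own approach: the paper states that Corollary \ref{lionsLemWithSymmetry} is obtained ``with the same reasoning as in the proof of Lemma \ref{lionsLem}'', which is precisely the reduction you carry out (fix $p\in(2,2^*)$, choose $s\in(0,1)$ with $p<\tfrac{2N}{N-2s}$, use $X^1\subset H^s$ and apply the preceding $H^s_{\mathcal{O}}$ corollary). Your explicit remark on the $\mathcal{O}$-compatibility of the embedding is a welcome clarification that the paper leaves implicit.
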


Now, we can repeat the proof of Theorem \ref{Th:splitting}, using Corollary \ref{lionsLemWithSymmetry} instead of Lemma \ref{lionsLem}, and obtain the following theorem.

\begin{Th}\label{th:dekompNonradial}
Suppose that $(u_n) \subset X_\tau \cap H^1_{\mathcal{O}} (\R^N)$ is a bounded in $X^1(\R^N)$, Poho\v{z}aev-Palais-Smale sequence for $J$ at level $c > 0$. Then, passing to a subsequence, there are an integer $k \geq 0$, $u_0 \in X_\tau \cap X^1_{\mathcal{O}} (\R^N)$, $u_0\geq 0$, and sequences $(y_n^j) \subset \R^{N-2M}$, $w^j \in X_\tau \cap H^1_{\mathcal{O}} (\R^N)$ for $j\in \{1,\ldots, k\}$ such that

\begin{itemize}
\item[(a)] $u_n \weakto u_0$, $J'(u_0)=0$ and $P(u_0)=0$;
\item[(b)] $|y_n^j| \to \infty$ and $|y_n^j - y_n^{j'}|\to \infty$ for $j \neq j'$;
\item[(c)] $w^j \neq 0$ and $w^j$ is a weak solution to
$$
-\Delta w_j = g(w_j) \quad \mbox{in } \R^N
$$
and is a critical point of $J_{\infty}$ for each $1 \leq j \leq k$;
\item[(d)] $u_n - u_0 - \sum_{j=1}^k w^j (\cdot - (0,0,y_n^j)) \to 0$ in $X^1 (\R^N)$;
\item[(e)] $\|u_n\|^2_\ell\to \|u_0\|^2_\ell+\sum_{j=1}^k  \left( \|\nabla w^j\|_{L^2 (\R^N)}^2 + \ell \| w^j \|_{L^2 (\R^N)}^2 \right)$;
\item[(f)] $J(u_n) \to J(u_0) + \sum_{j=1}^k J_\infty (w^j)$.
\end{itemize}
\end{Th}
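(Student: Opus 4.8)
The plan is to re-run the proof of Theorem~\ref{Th:splitting} almost verbatim, after three systematic substitutions: the ambient pair $H^1(\R^N)\subset X^1(\R^N)$ is replaced by $X_\tau\cap H^1_{\mathcal{O}}(\R^N)\subset X_\tau\cap X^1_{\mathcal{O}}(\R^N)$; the translation lattice $\Z^N$ is replaced by the sublattice $\{(0,0,y):y\in\Z^{N-2M}\}$ of translations acting only on the last factor $\R^{N-2M}$; and the plain vanishing Lemma~\ref{lionsLem} is replaced by its symmetric counterpart Corollary~\ref{lionsLemWithSymmetry}. First I would pass to a subsequence with $u_n\weakto u_0$ in $X^1(\R^N)$. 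Since $X_\tau$ and $X^1_{\mathcal{O}}(\R^N)$ are fixed-point sets of families of linear isometries of $X^1(\R^N)$ (the map $u\mapsto u\circ\tau$ and the $\mathcal{O}$-action preserve $\xi$ and the $L^2$-norm, because $\tau$ and the elements of $\mathcal{O}$ are orthogonal transformations fixing $|x|$), they are closed, hence weakly closed, so $u_0\in X_\tau\cap X^1_{\mathcal{O}}(\R^N)$; as in Theorem~\ref{Th:splitting}, $J'(u_0)=0$ follows from weak-to-weak$^*$ continuity of $J'$, and $P(u_0)=0$ from the splitting of $\xi$, of the nonlinear term (Vitali, as in \eqref{E:F_convergence}), and the Poho\v{z}aev identity for the critical point $u_0$.

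Next, put $v_n^1:=u_n-u_0\weakto0$ and split into the Lions dichotomy. If $\sup_{z\in\R^N}\int_{B(z,r)}|v_n^1|^2\,dx\to0$ for some $r>0$, then, since $v_n^1\in X^1_{\mathcal{O}}(\R^N)$, Corollary~\ref{lionsLemWithSymmetry} gives $v_n^1\to0$ in $L^t(\R^N)$ for $t\in(2,2^*)$, and the computation of Step~1 of Theorem~\ref{Th:splitting} --- testing $J'(u_n)$ against $v_n^1$ and using this $L^t$-convergence --- forces $u_n\to u_0$ in $X^1(\R^N)$, so the decomposition holds with $k=0$. Otherwise the mass does not vanish, and here lies the one genuinely new point: the escaping mass must travel along the $\R^{N-2M}$-directions. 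Indeed, by the $\mathcal{O}(M)\times\mathcal{O}(M)$-invariance of $v_n^1$ --- the same mechanism that yields Corollary~\ref{lionsLemWithSymmetry} via \cite[Corollary~3.2]{MR4173560} --- one may choose a center of an escaping lump of the form $z_n=(0,0,y_n^1)$ with $y_n^1\in\Z^{N-2M}$ and $\liminf_n\int_{B((0,0,y_n^1),\,r+\sqrt N)}|v_n^1|^2\,dx>0$; since $v_n^1\weakto0$ in $L^2_{\loc}(\R^N)$, necessarily $|y_n^1|\to\infty$. (A lump concentrating at a point with $(x_1,x_2)\neq(0,0)$ would, by its $\mathcal{O}(M)\times\mathcal{O}(M)$-orbit, be accompanied by a noncompact family of mutually disjoint copies carrying unbounded $L^2$-mass, which is impossible.)

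I would then set $w_n^1:=u_n(\cdot+(0,0,y_n^1))\in H^1(\R^N)$. The translation by $(0,0,y_n^1)$ acts only on the last factor, hence commutes with $\tau$ and with every element of $\mathcal{O}$, so $w_n^1\in X_\tau\cap H^1_{\mathcal{O}}(\R^N)$; choosing $s$ close to $1$ as in Theorem~\ref{Th:splitting}, $(w_n^1)$ is bounded in $H^s(\R^N)$ and, after a subsequence, $w_n^1\weakto w^1$ in $H^s(\R^N)$ with $w^1\in X_\tau\cap H^s_{\mathcal{O}}(\R^N)$, and $w^1\neq0$ by the non-vanishing of the mass near $z_n$. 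Testing $J'(u_n)$ against $\varphi(\cdot-(0,0,y_n^1))$ for $\varphi\in\cC_0^\infty(\R^N)$ and using $|(0,0,y_n^1)|=|y_n^1|\to\infty$ to kill the Hardy term along the translates (precisely the crude bound on $\int|x|^{-2}|u_n(\cdot+(0,0,y_n^1))\,\varphi|\,dx$ from the proof of Theorem~\ref{Th:splitting}), one gets $-\Delta w^1=g(w^1)$ in $\cD'(\R^N)$; Theorems~\ref{T:dystrybucje_na_H1} and~\ref{th:usuwanie_osobliwosci} upgrade $w^1$ to a weak $H^1(\R^N)$-solution, hence a critical point of $J_\infty$ with $P_\infty(w^1)=0$ by Remark~\ref{R:problem_graniczny_wlasnosci}.

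Finally, setting $v_n^2:=v_n^1-w^1(\cdot-(0,0,y_n^1))$, the norm-, energy- and Poho\v{z}aev-splitting identities (d)--(f) follow line by line from Steps~3--4 of Theorem~\ref{Th:splitting}: the cross terms $\langle u_0,w^1(\cdot-(0,0,y_n^1))\rangle_\ell$ vanish by Lemma~\ref{L:scalar_product_translation}, the translated norms converge by Lemma~\ref{H1-translations-bounded}, the translated Hardy terms vanish by the $R$-splitting plus exponential decay of $w^1$ as in \eqref{E:temp-new}, and the $f$-terms by Vitali's theorem. Iterating --- at each stage choosing new centers in $(0,0,\Z^{N-2M})$ and verifying $|y_n^j-y_n^{j'}|\to\infty$ as before --- the procedure stops after finitely many steps $k\geq0$, because (F1)--(F3) together with $J_\infty'(w^j)(w^j)=0$ give the uniform lower bound $\|\nabla w^j\|_{L^2(\R^N)}^2+\ell\|w^j\|_{L^2(\R^N)}^2\gtrsim1$, which with (e) and the boundedness of $(u_n)$ in $X^1(\R^N)$ caps the number of profiles. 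The only substantive obstacle is the structural claim of the second paragraph --- that all escaping mass is confined to the $\R^{N-2M}$-directions and that every weak limit $u_0,w^1,\dots,w^k$ inherits the symmetries $X_\tau$ and $H^1_{\mathcal{O}}$ --- which is the content imported from \cite{MR4173560}; the remainder is a mechanical transcription of the proof of Theorem~\ref{Th:splitting}.
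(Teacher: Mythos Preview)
Your outline is correct and matches the paper's approach exactly: the paper's own proof consists of the single instruction ``repeat the proof of Theorem~\ref{Th:splitting}, using Corollary~\ref{lionsLemWithSymmetry} instead of Lemma~\ref{lionsLem}'', together with a remark that Palais' principle of symmetric criticality upgrades the $w^j$ (and $u_0$) from constrained to free critical points --- a step you actually bypass cleanly by testing $J'(u_n)$ against \emph{arbitrary} $\varphi(\cdot-(0,0,y_n^1))\in H^1(\R^N)$, which yields $-\Delta w^1=g(w^1)$ in $\cD'(\R^N)$ directly.

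One correction: in your first paragraph you justify $P(u_0)=0$ via ``the Poho\v{z}aev identity for the critical point $u_0$''. This is precisely what the paper warns is \emph{unavailable} in $X^1(\R^N)$ (see the Remark after the definition of $P$): because of the singular boundary term in $\xi$, it is not known that critical points of $J$ automatically satisfy $P=0$. The paper's argument (and the one implicit in your own iteration) is instead: from the splitting one has $P(v_n^{k+1})=P(u_n)-P(u_0)-\sum_jP_\infty(w^j)+o(1)=-P(u_0)+o(1)$, and when the procedure terminates with $v_n^{k+1}\to0$ in $X^1(\R^N)$, continuity of $P$ gives $P(u_0)=0$. So $P(u_0)=0$ is a \emph{conclusion} of the decomposition, not an input to it.
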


The slight difference in the proof is that, we know $w^j$ are weak solutions to the limiting problem in $X_\tau \cap H^1_\mathcal{O} (\R^N)$, so a critical point of $J_\infty |_{X_\tau \cap H^1_{\mathcal{O}}}$. However, from Palais' principle of symmetric criticality, it is a critical point of the free functional $J_\infty$. Similarly we know that $u_0$ is a critical point of $J$.

In the same way as in Lemma \ref{r}, there is $\rho > 0$ such that
$$
\inf_{\|u\|_\ell = \rho} J(u) > 0.
$$
To show that $J$ has a mountain pass geometry, in Lemma \ref{v} it is shown how to, for any $R > 0$, construct a radial function $w_R \in H^1(B(0,R+1)) \cap L^\infty (\R^N) \subset X^1 (\R^N)$ such that 
$$
\int_{\R^N} G(w_R) \, dx \geq C_1 R^N - C_2 R^{N-1}, \quad C_1, C_2 > 0,
$$
and
$$
\|w_R\|_{L^2 (\R^N)}^2 \gtrsim R^N.
$$
Let $R > 1$. Now, as in \cite[Remark 4.2]{MR4173560}, we choose an odd and smooth function $\varphi : \R \rightarrow [-1,1]$ such that $\varphi(x) = 1$ for $x \geq 1$, and $\varphi(x) = -1$ for $x \leq -1$. Then, as shown in \cite[Remark 4.2]{MR4173560}, the function (with a small abuse of notation $w_R (x) = w_R( |x| )$) 
$$
\widetilde{w}_R (x_1, x_2, x_3) := w_R ( \sqrt{|x_1|^2+ |x_2|^2+ |x_3|^2}) \varphi(|x_1|-|x_2|)
$$
belongs to $X_\tau \cap H^1_{\mathcal{O}} (\R^N)$ and
$$
\int_{\R^N} G(\widetilde{w}_R) \, dx \geq \int_{\R^N} G(w_R) \, dx - C_3 \sum_{i=N-M}^{N-1} R^i \geq C_1 R^N - C_2 \sum_{i=N-M}^{N-1} R^i.
$$
We compute
\begin{align*}
\int_{\R^N} | \widetilde{w}_R |^2 \, dx =& \int_{\R^N} |w_R|^2 | \varphi (|x_1|-|x_2|) |^2 \, dx \\
=& \int_0^{R+1} \int_0^{R+1} \int_0^{R+1} | w_R (r) |^2 | \varphi (r_1-r_2) |^2 r_1^{M-1} r_2^{M-1} r_3^{N-2M-1} \, dr_1 \, dr_2 \, dr_3 \\
=& 2 \int_0^{R+1} \int_0^{R+1} \int_{r_2}^{R+1} | w_R (r) |^2 | \varphi (r_1-r_2) |^2 r_1^{M-1} r_2^{M-1} r_3^{N-2M-1} \, dr_1 \, dr_2 \, dr_3 \\
=& \int_{\R^N} |w_R|^2 \, dx \\
&- 2 \int_0^{R+1} \int_0^{R+1} \int_{r_2}^{r_2+1} \underbrace{| w_R (r) |^2 (1 - | \varphi (r_1-r_2) |^2)}_{ \leq \zeta_0^2 } r_1^{M-1} r_2^{M-1} r_3^{N-2M-1} \, dr_1 \, dr_2 \, dr_3 \\
\geq& \int_{\R^N} |w_R|^2 \, dx - 2 \zeta_0^2 \int_0^{R+1} \int_0^{R+1} \int_{r_2}^{r_2+1}  r_1^{M-1} r_2^{M-1} r_3^{N-2M-1} \, dr_1 \, dr_2 \, dr_3 \\ 
=& \int_{\R^N} |w_R|^2 \, dx - \sum_{i=N-M}^{N-1} \beta_i R^i
\end{align*}
for some $\beta_i > 0$. Hence, for sufficiently large $R > 1$ we have 
$$
J(\widetilde{w}_R) < 0, \quad \|\widetilde{w}_R\|_\ell^2 > \rho.
$$

\begin{proof}[Proof of Theorem \ref{Th:Main2}]

From the above reasoning, $J$ has the mountain pass geometry, and repeating the same arguments as in Lemma \ref{Lem:ExistencePPS} we find a Poho\v{z}aev-Palais-Smale sequence for $J$ on the mountain pass level. Then, using Theorem \ref{th:dekompNonradial} repeating the proof of Theorem \ref{Th:Main1-Existence}(a), we find a nontrivial critical point of $J |_{X_\tau \cap X^1_{\mathcal{O}} (\R^N)}$, which - thanks to Palais' principle of symmetric criticality - is also a critical point of $J$. Moreover the solution belongs to $\cM$, and - as in Theorem \ref{Th:Main1-Existence}(b) - it is a minimizer of $J$ on $X_\tau \cap H^1_{\mathcal{O}} (\R^N) \cap \cM$.

It remains to show
$$
\inf_{\cM \cap X_\tau \cap X^1_{\mathcal{O}} (\R^N)} J \geq 2 \inf_\cM J.
$$

Let $u \in \cM \cap X_\tau \cap X^1_{\mathcal{O}} (\R^N)$ be the minimizer found above. Define two open sets
$$
\Omega_1 := \{ x \in \R^N \ : \ |x_1| > |x_2| \}, \quad \Omega_2 := \{ x \in \R^N \ : \ |x_1| < |x_2| \}.
$$
Define $u_1 := \mathbbm{1}_{\Omega_1} u$, $u_2 := \mathbbm{1}_{\Omega_2} u$. Then $u_1, u_2 \in X^1 (\R^N)$, since $u = 0$ on $\{|x_1|=|x_2|\}$, and $\xi(u) = 2 \xi(u_1) = 2 \xi(u_2)$. We also have $\int_{\R^N} G(u) \, dx = 2 \int_{\R^N} G(u_1) \, dx$. Hence $u_1 \in \cM$, $J(u_1) = J(u_2)$ and since $u_1$, $u_2$ have disjoint supports, we get
$$
J(u) = J(u_1) + J(u_2) = 2 J(u_1) \geq 2 \inf_{\cM} J.
$$
\end{proof}

\section*{Acknowledgements}
Bartosz Bieganowski and Daniel Strzelecki were partly supported by the National Science Centre, Poland (Grant No. 2022/47/D/ST1/00487).
\bibliographystyle{acm}
\bibliography{Bibliography}

\end{document}